\def\1{{\rm l}\hskip -0.21truecm 1}
\newcommand{\N}{\mathbb{N}}
\newcommand{\R}{\mathbb{R}}
\newtheorem{thm}{Theorem}[section]
\newtheorem{prop}[thm]{Proposition}
\newtheorem{lemma}[thm]{Lemma}
\newtheorem{definition}[thm]{Definition}
\newtheorem{rem}[thm]{Remark}
\def\1{{\rm l}\hskip -0.21truecm 1}
\begin{document}

\begin{center}
\LARGE{Convergence of delay equations driven by a H\"older continuous function of order $\beta\in(\frac13,\frac12)$}
\end{center}

\begin{center}
\begin{tabular}{@{}c@{}}
Mireia Besal\'u\textsuperscript{1} \\
\small mbesalu@ub.edu
\end{tabular}%
\quad \quad
\begin{tabular}{@{}c@{}}
Giulia Binotto\textsuperscript{1} \\
\small gbinotto@ub.edu
\end{tabular}
\quad \quad
\begin{tabular}{@{}c@{}}
Carles Rovira\textsuperscript{1} \\
\small carles.rovira@ub.edu
\end{tabular}
\smallskip

Departament de  Matem\`atiques i Inform\`atica,

 Universitat de Barcelona, Barcelona.
\end{center}

\let\thefootnote\relax\footnotetext{
\textsuperscript{1}
The authors have been supported by the grant MTM2015-65092-P from MINECO, Spain}

\vskip 3pt
\begin{abstract}%
In this note we show that, when the delay goes to zero, the solution of multidimensional delay differential  equations driven by a H\"older continuous function of order $\beta \in (\frac13,\frac12)$ converges with the supremum norm to the solution for the equation without delay.  As an application, we discuss the applications to  stochastic differential equations.
\end{abstract}

\noindent {\bf Keywords:}  delay equation, stochastic differential equation, convergence,
 fractional integral
\vskip 6pt

\noindent {\bf AMS 2000 MSC:} 60H05, 60H07
\vskip 6pt

\noindent {\bf Running head:}  Convergence of delay equations

\section{Introduction} \label{dSDE_Intro}

In \cite{HN} Hu and Nualart establish using fractional calculus  the existence and uniqueness of solution for the dynamical system
	\[ dx_t = f(x_t) \,dy_t, \]
where $y$ is a H\"older continuous function of order $\beta\in(\frac13,\frac12)$.  In this work they give an  explicit expression for  the integral $\int_{0}^{t}f(x_s)dy_s$
that depends on the functions $x$, $y$ and a quadratic multiplicative functional 
$x\otimes y$.  As an example of a path-wise approach to classical stochastic calculus, they apply these results to solve stochastic differential equations driven by a multidimensional Brownian motion.  Using the same approach, Besal\'u and Nualart \cite{BN} got estimates
for the supremum norm of the solution and Besal\'u, M\'arquez and Rovira \cite{BMR} studied delay equations with non-negativity constraints.

The work of  Hu and Nualart   \cite{HN}  is an extension of the previous paper of 
 Nualart and R\u{a}\c{s}canu  \cite{NR} where they  study the  dynamical
systems   $dx_{t}=f(x_{t})dy_{t}$, where the control function $y$ is H\"{o}lder
continuous of order $\beta >\frac{1}{2}$.
In this case the Riemann-Stieltjes integral $\int_{0}^{t}f(x_s)dy_s$     can be expressed as a Lebesgue integral
using fractional derivatives  following the ideas of Z\"{a}hle \cite{Z}.

All this papers have to be seen in the framework of the theory of rough path analysis and the path-wise approach to classical stochastic calculus. This theory have  been developed from the initial paper
by Lyons \ \cite{L} and has generated a wide literature (see, for instance,
Lyons and Qian \cite{L-Q},  Friz and Victoir  \cite{FV2}, Lejay \cite{Le} or  Gubinelli  \cite{Gu}).  
On the other hand, we
refer  for instance to Coutin and Lejay \cite{CA}, Friz and Victoir \cite{Fr}, Friz \cite{FV} and Ledoux {\it et al.} \cite{LQZ} for some
applications of   rough path analysis to the stochastic calculus.

\smallskip

Delay differential equations rise from the need to study models that behave more like the real processes. They find their applications in dynamical systems with aftereffects or when the dynamics are subjected to propagation delay. Some examples are epidemiological models with incubation periods that postpone the transmission of disease, or neuronal models where the spatial distribution of neurons can cause a delay in the transmission of the impulse.
Sometimes the delay avoids some usual problems, but in general it adds difficulties and cumbersome notations.

\smallskip

The purpose of our paper is to consider the following differential equation with delay:
	\begin{eqnarray*} 
		x^r_t &=& \eta_0 + \int_0^t b(u,x^r) \,du + \int_0^t \sigma(x^r_{u-r}) \,dy_u, \qquad t\in(0,T], \\
		x^r_t &=& \eta_t, \qquad t\in[-r,0],
	\end{eqnarray*}
where $r$ denotes a strictly positive time delay and
where $\eta:[-r,0]\to\R^d$ is a smooth function and $y$ is a H\"older continuous function of order $\beta\in(\frac13,\frac12)$  and the hereditary term $b(u,x)$ depends on the path $\{x_s, \,0\leq s\leq u\}$.  From Hu and Nualart \cite{HN} and Besal\'u, M\'arquez and Rovira \cite{BMR} it is easy to check that there exists a unique solution of this equation. Our aim is to prove that it converges almost surely in the supremum norm to the solution of the differential equation without delay
	\begin{equation*}
		x_t = \eta_0 + \int_0^t b(u,x_u) \,du + \int_0^t \sigma(x_u) \,dy_u, \qquad t\in[0,T],
	\end{equation*}
when the delay tends to zero. Our approach is based on the techniques of the classical fractional calculus and it is inspired by \cite{HN}.
Finally we will apply these results to stochastic differential equations driven by Brownian motion.

\smallskip

The case when $\beta>\frac12$ is studied by Ferrante and Rovira in \cite{FR}. They prove that the solution of the delay equation converges, almost surely and in $L^p$, to the solution of the equation without delay and then apply the result pathwise to fractional Brownian motion with Hurst parameter $H>\frac12$.

\smallskip

With a different approach based on a slight variation of the Young integration theory, called algebraic integration, Le\'on and Tindel prove in \cite{LT} the existence of a unique solution for a general class of delay differential equations driven by a H\"older continuous function with parameter greater that $\frac12$. They obtain some estimates of the solution which allow to show that the solution of a delay differential equation driven by a fractional Brownian motion with Hurst parameter $H>\frac12$ has a $\mathcal{C}^\infty$-density.

In the case when $\beta<\frac12$ more difficulties appear and in literature.
In \cite{NNT}, Neuenkirch, Nourdin and Tindel consider delay differential equation driven by a $\beta$-H\"older continuous function with $\beta>\frac13$.  The authors show the existence of a unique solution for these equations under suitable hypothesis. Then, they apply these results to a delay differential equations driven by a fractional Brownian motion with Hurst parameter $H>\frac13$.
These results are extended by Tindel and Torrecilla  in \cite{TT} to the deterministic case of order $\beta>\frac14$ and the corresponding stochastic case with Hurst parameter $H>\frac14$.

\smallskip

The  paper is organized as follows. The following section is devoted to introduce some notation. In Section \ref{dSDE_Main} we define the equations and the solutions we work with and we describe our main result. Section \ref{dSDE_EstimatesInt} contains technical estimates for the study of the integrals. In Section \ref{dSDE_EstimatesSol}  we give some estimates of the solutions of our equations. In Section  \ref{dSDE_Proof} we give the proof of the main theorem. Finally, the last section is dedicated to give an exemple of the application of the main theorem studying stochastic differential equations driven by Brownian motion.

\section{Preliminaries} \label{dSDE_Prel}

First we recall some definitions and results presented in Hu and Nualart \cite{HN}.
\smallskip

Fix a time interval $[0,T]$. For any function $x:[0,T]\rightarrow \R^d$, the $\beta$-H\"{o}lder norm of $x$ on the interval $[s,t] \subset \lbrack 0,T]$, where $0<\beta\leq1$, will be denoted by
\begin{equation*}
\left\| x\right\| _{\beta(s,t)}=\sup_{s<u<v<t}\frac{|x_v-x_u|}{(v-u)^\beta}.
\end{equation*}
If $\Delta_T :=\left\{ (s,t):0\leq s<t\leq T\right\} $, for any $(s,t)\in\Delta_T$ and for any $g:\Delta_T\rightarrow\R^d$ we set 
\begin{equation*}
\left\|g\right\|_{\beta(s,t)}=\sup_{s<u<v<t}\;\frac{|g_{u,v}|}{(v-u)^{\beta}}.
\end{equation*}
Moreover, $\left\| \cdot \right\|_{\infty(s,t)}$ will denote the supremum norm in the interval $(s,t)$. 

\smallskip

Hu and Nualart \cite{HN} prove an explicit formula for integrals of the form $\int_a^b f(x_u) \,dy_u$ in terms of $x$, $y$ and $x\otimes y$ and  transform the dynamical system $dx_t=f(x_t)\,dy_t$ into a closed system of equations involving only $x$, $x\otimes y$ and $x\otimes (y\otimes y)$ . Fix $0<\beta\leq1$. From Lyons \cite{L} we need  to  introduce the  definition of  $x\otimes y$:
\begin{definition}\label{def:otimes}
We say that $(x,y,x\otimes y)$ is an {$(d,m)$-dimensional $\beta $-H\"{o}lder continuous multiplicative functional} if:
\begin{enumerate}
\item $x:[0,T]\rightarrow \mathbb{R}{}^{d}$ and $y:[0,T]\rightarrow \R^m$ are $\beta $-H\"{o}lder continuous functions,
\item $x\otimes y:\Delta_T\rightarrow\R^d\otimes\R^m$ is a continuous function satisfying the following properties:
\begin{enumerate} 
\item {\upshape(Multiplicative property)} For all $s\leq u\leq t$ we have 
\begin{equation*}
(x\otimes y)_{s,u}+(x\otimes y)_{u,t}+(x_u-x_s)\otimes (y_t-y_u)=(x\otimes y)_{s,t}.
\end{equation*}
\item For all $(s,t)\in\Delta_T$ 
\begin{equation*}
|(x\otimes y)_{s,t}| \leq c|t-s|^{2\beta}.
\end{equation*}
\end{enumerate}
\end{enumerate}
\end{definition}
      
We  denote by $M_{d,m}^{\beta}(0,T)$ the space of $(d,m)$-dimensional $\beta$-H\"{o}lder continuous multiplicative functionals. Furthermore, we will denote by $M_{d,m}^{\beta}(a,b)$ the obvious extension of the definition $M_{d,m}^{\beta}(0,T)$ to a general interval $(a,b)$.
Let us recall the following functional on $M_{d,m}^{\beta}(0,T)$ for $a,b\in\Delta_T$:

	\begin{equation} \label{Phi2}
		\Phi_{\beta(a,b)}(x,y) = \|x \otimes y \|_{2\beta(a,b)} + \|x\|_{\beta(a,b)} \|y\|_{\beta(a,b)}.
	\end{equation}
Moreover, if $(x,y,x\otimes y)$ and $(y,z,y\otimes z)$ belongs to $M_{d,m}^{\beta}(0,T)$ we define
	\begin{eqnarray} \label{Phi3}
		\Phi_{\beta(a,b)}(x,y,z) &=& \|x\|_{\beta(a,b)} \|y\|_{\beta(a,b)} \|z\|_{\beta(a,b)} + \|z\|_{\beta(a,b)} \|x\otimes y\|_{2\beta(a,b)} \nonumber \\
		&& \hspace{10mm} + \|x\|_{\beta(a,b)} \|y\otimes z\|_{2\beta(a,b)}.
	\end{eqnarray}
From these definitions it follows that
	\begin{equation} \label{eq_Phi2}
		\big\| (x\otimes y)_{\cdot,b} \big\|_{\beta(a,b)} \leq \Phi_{\beta(a,b)}(x,y) (b-a)^{\beta}
	\end{equation}
and
	\begin{equation} \label{eq_Phi3}
		\big\| x\otimes(y\otimes z)_{\cdot,b} \big\|_{2\beta(a,b)} \leq K \,\Phi_{\beta(a,b)}(x,y,z) (b-a)^{\beta}
	\end{equation}
that are equations (3.29) and (3.30) of \cite{HN} respectively.    
We refer to \cite{HN} and \cite{L} for a more detailed presentation on $\beta $-H\"{o}lder continuous multiplicative functionals.

\smallskip

To define the integral $\int_a^b f(x_u) \,dy_u$  we use the construction of the integral given by Hu and Nualart in \cite{HN}. They are inspired by the work of Z\"alhe \cite{Z} and use fractional derivatives. We refer to Hu and Nualart in \cite{HN} for the details.

\smallskip
In the sequel, $K$ denotes a generic constant that may depend on the parameters $\beta,\alpha$, $\lambda$ and $T$ and vary from line to line.

\section{Main result} \label{dSDE_Main}

Consider the following differential equation on $\R^d$ with delay:
	\begin{eqnarray} \label{eq:delay}
		x^r_t &=& \eta_0 + \int_0^t b(u,x^r_u) \,du + \int_0^t \sigma(x^r_{u-r}) \,dy_u, \qquad t\in[0,T], \nonumber \\
		x^r_t &=& \eta_t, \qquad t\in[-r,0),
	\end{eqnarray}
where $x$ and $y$ are H\"older continuous functions of order $\beta\in(\frac13,\frac12)$, $\eta$ is a continuous function and $r$ denotes a strictly positive time delay.

\smallskip

Set the following hypothesis:
	\begin{itemize}
	    \item[\bfseries(H1)] \label{H1} $\sigma:\R^d\rightarrow\R^d\times\R^m$ is a bounded and continuously twice differentiable function such that $\sigma'$ and $\sigma''$ are bounded and $\lambda$-H\"older continuous for $\lambda>\frac{1}{\beta}-2$.
	    \item[\bfseries(H2)] \label{H2}
	    $b:[0,T]\times \R^d\rightarrow \R^d$ is a measurable function such that there exists $b_0\in L^\rho(0,T;\R^d)$ with $\rho\geq 2$ and $\forall N\geq 0$ there exists $L_N>0$ such that:
    		\begin{itemize}
		        \item[{\bf (1)}] $\left|b(t,x_t)-b(t,y_t)\right|\leq L_N |x_t-y_t|,\; \forall x,y$ such that $|x_t|\leq N$, $|y_t|\leq N$ $\forall t\in[0,T]$,
		        \item[{\bf (2)}] $\left|b(t,x_t)\right|\leq L_0 |x_t|+b_0(t),\quad \forall t\in[0,T]$.
		    \end{itemize}
	    \item[\bfseries(H3)] \label{H3} $\sigma$ and $b$ are bounded functions.
	\end{itemize}

Conditions {\bfseries(H1)} and {\bfseries(H2)} are a particular case of the hypothesis for the proof of existence and uniqueness of solution of the delay equation \eqref{eq:delay}, while condition {\bfseries(H3)} is necessary to prove that the solution is bounded.

\smallskip
We denote by $(x,y,x\otimes y)\in M^\beta_{d,m}(0,T)$ the solution of the stochastic differential equation on $\R^d$ without delay:
	\begin{equation} \label{eq:nodelay}
		x_t = \eta_0 + \int_0^t b(u,x_u) \,du + \int_0^t \sigma(x_u) \,dy_u, \qquad t\in[0,T].
	\end{equation}
In \cite{HN}, Hu and Nualart prove under the assumptions that $\sigma:\R^d\rightarrow\R^d\times\R^m$ is a continuously differentiable function such that $\sigma'$ is $\lambda$-H\"older continuous, where $\lambda>\frac1\beta-2$, $\sigma$ and $\sigma'$ are bounded, and $(y,y,y\otimes y)\in M^\beta_{m,m}(0,T)$, that there exists a bounded solution $(x,y,x\otimes y)\in M^\beta_{d,m}(0,T)$ for the differential equation (\ref{eq:nodelay}) with $b \equiv 0$.
Moreover, if $\sigma$ is twice continuously differentiable with bounded derivatives and $\sigma''$ is $\lambda$-H\"older continuous, where $\lambda>\frac1\beta-2$, the solution is unique.
Here the authors consider the equation without the hereditary term, but the results can be easily extended to the case when the hereditary term does not vanish.
 If $(y,y,y\otimes y)\in M_{m,m}^\beta(0,T)$, then  consider  $(x,y,x\otimes y)$, where
	\begin{equation} \label{eq:nodelay2}
		(x\otimes y)_{s,t} = \int_s^t (y_t-y_u) b(u,x_u) \,du + \int_s^t \sigma(x_{u}) \,d_u(y\otimes y)_{\cdot,t}.
	\end{equation}
Then  a solution of equation \eqref{eq:nodelay} is an element of $M_{d,m}^\beta(0,T)$ such that  \eqref{eq:nodelay} and \eqref{eq:nodelay2} hold.

\smallskip

On the other hand, following the ideas contained in \cite{BMR}, it is easy to show that there exists a unique solution $(x^r,y,x^r\otimes y)\in M^\beta_{d,m}(0,T)$ of the delay equation \eqref{eq:delay} . This is proved assuming that $\sigma$ and $b$ satisfy the hypothesis {\bfseries (H1)} and {\bfseries (H2)}, respectively, with $\rho\geq\frac{1}{1-\beta}$, $(\eta_{\cdot-r},y,\eta_{\cdot-r}\otimes y)\in M^\beta_{d,m}(0,r)$ and $(y_{\cdot-r},y,y_{\cdot-r}\otimes y)\in M^\beta_{m,m}(r,T)$.
Assuming also that hypothesis {\bfseries (H3)} is  satisfied, we obtain that the solution is bounded. In this case, 
 $(x^r,y,x^r\otimes y)\in M_{d,m}^\beta(-r,T)$ is the solution, where $(x^r\otimes y)_{s,t}$ is defined as follows:
	\begin{itemize}
		\item for $s<t\in[-r,0)$: \qquad
	        	$(x^r\otimes y)_{s,t} = (\eta\otimes y)_{s,t} = \int_s^t (y_t-y_u) \,d\eta_u,$	    
	    \item for $s\in[-r,0)$ and $t\in[0,T]$,
	        \begin{eqnarray} \label{eq:delayxy2}
		        (x^r\otimes y)_{s,t} &=& (\eta\otimes y)_{s,0} + \int_0^t (y_t-y_u) b(u,x^r_u) \,du \nonumber \\
		        && \hspace{5mm} + \int_0^t \sigma(x^r_{u-r}) \,d_u(y\otimes y)_{\cdot,t} + (\eta_0-\eta_s)\otimes(y_t-y_0), \nonumber\\
	        \end{eqnarray}
	    \item for $s<t\in[0,T]$:\qquad
	        $	        	(x^r\otimes y)_{s,t} = \int_s^t (y_t-y_u) b(u,x^r_u) \,du + \int_s^t \sigma(x^r_{u-r}) \,d_u(y\otimes y)_{\cdot,t}.$
	\end{itemize}

\smallskip

Let $\beta\in(\frac13,\frac12)$ and set $\beta'=\beta-\varepsilon$, where $\varepsilon>0$ is such that $\beta-2\varepsilon>0$ and $\lambda>\frac{1}{\beta-\varepsilon}-2$. Set $r_0 \in (0,T)$.
The main result of the paper is the following theorem:
	\begin{thm} \label{thm_determ}
		Suppose that $(y,y,y\otimes y)$ belongs to $M^\beta_{d,m}(0,T)$ and $(y_{\cdot-{r}},y,y_{\cdot-r}\otimes y)$ belongs to $M^\beta_{d,m}(r,T)$ for all $0<r \le r_0$. Assume that $\sigma$ and $b$ satisfy {\upshape\bfseries (H1)} and {\upshape\bfseries (H2)} respectively, and both satisfy {\upshape\bfseries (H3)}. Assume also that $(\eta_{\cdot-r_0},y,\eta_{\cdot-r_0}\otimes y)\in M^\beta_{d,m}(0,r_0)$,  $\|\eta\|_{\beta(-r_0,0)}<\infty$ and $\sup_{r\leq r_0} \Phi_{\beta(0,r)}(\eta_{\cdot-r},y)<\infty$ and suppose that $\|(y-y_{\cdot-r})\otimes y\|_{2\beta'(r,T)} \rightarrow 0$ and $\|y_{\cdot-r}\otimes (y-y_{\cdot-r})\|_{2\beta'(r,T)} \rightarrow 0$ when $r$ tends to zero. Then,
			\[ \lim_{r\rightarrow0} \|x-x^r\|_{\infty} = 0 \quad  \qquad {\rm and} \qquad 
			 \lim_{r\rightarrow0} \|(x\otimes y)-(x^r\otimes y)\|_{\infty} = 0 \quad  \]
	\end{thm}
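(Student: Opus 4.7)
The plan is to derive convergence by a Gronwall-type iteration on subintervals of $[0,T]$ that are short enough that the H\"older contribution from $y$ beats the Lipschitz constant of the coefficients. First I would subtract the two defining equations to get, for $t \in [0,T]$,
\[
  x_t - x^r_t = \int_0^t [b(u,x_u) - b(u,x^r_u)]\,du + \int_0^t [\sigma(x_u) - \sigma(x^r_{u-r})]\,dy_u,
\]
and, by subtracting \eqref{eq:nodelay2} from \eqref{eq:delayxy2} on a subinterval $(s,t)\subset(0,T]$,
\[
  [(x-x^r)\otimes y]_{s,t} = \int_s^t (y_t-y_u)[b(u,x_u)-b(u,x^r_u)]\,du + \int_s^t [\sigma(x_u)-\sigma(x^r_{u-r})]\,d_u(y\otimes y)_{\cdot,t}.
\]
The crucial algebraic step is the decomposition $\sigma(x_u)-\sigma(x^r_{u-r}) = [\sigma(x_u)-\sigma(x^r_u)] + [\sigma(x^r_u)-\sigma(x^r_{u-r})]$: the first summand is Lipschitz in $x-x^r$ and will be absorbed by Gronwall, while the second vanishes with the delay since $\sigma$ is bounded Lipschitz and $x^r$ is uniformly $\beta$-H\"older.

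Next I would apply the integral estimates of Section \ref{dSDE_EstimatesInt} to bound both rough integrals on a block $[a,a+\Delta] \subset [0,T]$ in terms of the $\Phi$-functionals \eqref{Phi2}--\eqref{Phi3} and H\"older norms of the relevant paths. Because $\sigma(x^r_{u-r})$ is integrated against $dy_u$, one is forced to introduce cross-tensors of the form $x^r_{\cdot-r}\otimes y$, which are in turn split via
\[
  x^r_{\cdot-r}\otimes y = x^r\otimes y + (x^r_{\cdot-r}-x^r)\otimes y,
\]
and the second piece is further controlled using the multiplicative property (Definition \ref{def:otimes}) in terms of $(y-y_{\cdot-r})\otimes y$ and $y_{\cdot-r}\otimes(y-y_{\cdot-r})$, both of which tend to $0$ in $2\beta'$-norm by hypothesis. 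Combined with Lipschitz/Taylor estimates for $\sigma$, $\sigma'$ (using {\bfseries (H1)}) and the Lipschitz bound on $b$ (from {\bfseries (H2)} and the uniform boundedness provided by {\bfseries (H3)}), this yields an inequality of the schematic form
\[
  \|x-x^r\|_{\beta'(a,a+\Delta)} + \|(x-x^r)\otimes y\|_{2\beta'(a,a+\Delta)} \le C\,\Delta^{\beta-2\varepsilon}\,\bigl[\|x-x^r\|_{\infty(0,a+\Delta)} + \|(x-x^r)\otimes y\|_{\infty(0,a+\Delta)}\bigr] + \delta(r),
\]
where $\delta(r)$ collects all ingredients that tend to $0$ as $r\to 0$ (namely $r^{\beta}$ terms and the $\|(y-y_{\cdot-r})\otimes y\|_{2\beta'}$-type quantities), and the constant $C$ depends only on bounds provided uniformly in $r\le r_0$ by Section \ref{dSDE_EstimatesSol}.

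To close the argument I would choose $\Delta$ small enough that $C\,\Delta^{\beta-2\varepsilon} < \tfrac12$, which lets me rearrange the inequality on each block and pass to $\infty$-norms by interpolation $\|\cdot\|_{\infty(a,a+\Delta)} \le \Delta^{\beta'}\|\cdot\|_{\beta'(a,a+\Delta)}$. Setting $M_r(t) := \|x-x^r\|_{\infty(0,t)} + \|(x-x^r)\otimes y\|_{\infty(0,t)}$, this produces a recursion $M_r(a+\Delta) \le c_1 M_r(a) + c_2\,\delta(r)$ with initial datum $M_r(0)=0$. Iterating over the $\lceil T/\Delta\rceil$ blocks that cover $[0,T]$ then yields $M_r(T) \le C\delta(r) \to 0$, which is exactly the conclusion of the theorem.

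The main obstacle I expect is the bookkeeping in the second step: producing a bound on $\int_s^t[\sigma(x_u)-\sigma(x^r_{u-r})]\,d_u(y\otimes y)_{\cdot,t}$ via the fractional-integral representation of \cite{HN} that cleanly separates the Lipschitz-in-state contribution (feeding back into $x-x^r$ and its tensor) from the delay-in-argument contribution, and that extracts precisely the quantities $\|(y-y_{\cdot-r})\otimes y\|_{2\beta'}$ and $\|y_{\cdot-r}\otimes(y-y_{\cdot-r})\|_{2\beta'}$ the hypothesis makes small. Once this decomposition is in place, the rest of the argument is a routine Gronwall-on-blocks procedure.
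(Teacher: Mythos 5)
Your proposal is correct and follows essentially the same route as the paper's own proof: your key decomposition $\sigma(x_u)-\sigma(x^r_{u-r})=[\sigma(x_u)-\sigma(x^r_u)]+[\sigma(x^r_u)-\sigma(x^r_{u-r})]$ is exactly the paper's \eqref{def:difference2}, your block estimates correspond to Propositions \ref{prop:normbetab}--\ref{prop:normbetarfx} combined with Propositions \ref{prop:delay} and \ref{prop:norm2betaotimes}, and your closing Gronwall-on-blocks recursion $M_r(a+\Delta)\leq c_1 M_r(a)+c_2\,\delta(r)$ is precisely the paper's iteration $\sup_{0\leq t\leq b}|x_t-x^r_t|\leq 2\sup_{0\leq t\leq a}|x_t-x^r_t|+4T^{\beta'}H_r$ over a partition of mesh $\Delta_{\beta'}$. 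One small caution: the delay contribution does not vanish merely because $\sigma$ is Lipschitz and $x^r$ is uniformly $\beta$-H\"older --- one also needs the $\beta'$-H\"older and tensor-level bounds $\|x^r-\widehat{x}^r\|_{\beta'}\leq K\rho\Lambda\,r^{\varepsilon}$ and $\|(x^r-\widehat{x}^r)\otimes y\|_{2\beta'}\to 0$ of Proposition \ref{prop:delay}, which is where the hypotheses on $\|(y-y_{\cdot-r})\otimes y\|_{2\beta'(r,T)}$ and $\|y_{\cdot-r}\otimes(y-y_{\cdot-r})\|_{2\beta'(r,T)}$ actually enter --- but your final paragraph acknowledges exactly this, so the plan matches the paper.
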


\section{Estimates of the integrals} \label{dSDE_EstimatesInt}

In this section we will give some estimates for the integrals appearing in our equations. We begin  recalling Propositions 3.4 and Proposition 3.9 from \cite{HN}.

	\begin{prop}\label{prop1HN}
		Let $(x,y,x \otimes y)$ be in $M_{d,m}^\beta(0,T)$. Assume that $f:\R^d\longrightarrow\R^m$ is a continuous differentiable function such that $f'$ is bounded and $\lambda$-H\"older continuous, where $\lambda>\frac{1}{\beta}-2$. Then, for any $0\leq a<b\leq T$, we have
		\begin{eqnarray*}
			\Big| \int_a^b f(x_u)\,dy_u \Big| &\leq& K |f(x_a)| \,\|y\|_{\beta(a,b)} (b-a)^\beta + K\,\Phi_{\beta(a,b)}(x,y) \\
			&& \hspace{5mm} \times \left( \|f'\|_\infty + \|f'\|_\lambda \|x\|^\lambda_{\beta(a,b)} (b-a)^{\lambda\beta} \right) (b-a)^{2\beta},
		\end{eqnarray*}
		where $\Phi_{\beta(a,b)}(x,y)$ is defined in (\ref{Phi2}).
	\end{prop}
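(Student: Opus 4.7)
The plan is to reproduce the Taylor-expansion strategy of Hu and Nualart, in which the integral $\int_a^b f(x_u)\,dy_u$ is split into three pieces of progressively higher regularity and each is estimated separately, using the fractional-derivative construction of the integral from \cite{HN} to handle the last (and hardest) piece.

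\smallskip

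First, expand $f$ to first order around $x_a$,
\[ f(x_u) = f(x_a) + f'(x_a)(x_u - x_a) + R(u), \qquad R(u) := f(x_u) - f(x_a) - f'(x_a)(x_u - x_a). \]
The $\lambda$-H\"older continuity of $f'$ combined with the $\beta$-H\"older continuity of $x$ on $[a,b]$ yields the standard Taylor-remainder bounds
\[ |R(u)| \leq K \|f'\|_\lambda \|x\|^{1+\lambda}_{\beta(a,b)} (b-a)^{(1+\lambda)\beta}, \]
\[ |R(u) - R(v)| \leq K \|f'\|_\lambda \|x\|^{1+\lambda}_{\beta(a,b)} (b-a)^{\lambda\beta} |u-v|^\beta, \qquad R(a) = 0. \]

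\smallskip

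Next, by linearity and by the very design of the rough integral in \cite{HN}, which is tailored so that the above Taylor decomposition transfers directly to the integral,
\[ \int_a^b f(x_u)\,dy_u = f(x_a)(y_b - y_a) + f'(x_a)(x \otimes y)_{a,b} + \int_a^b R(u)\,dy_u. \]
The first term is bounded by $|f(x_a)| \|y\|_{\beta(a,b)} (b-a)^\beta$. The second, using $\|x \otimes y\|_{2\beta(a,b)} \leq \Phi_{\beta(a,b)}(x,y)$, by $\|f'\|_\infty \, \Phi_{\beta(a,b)}(x,y)\,(b-a)^{2\beta}$. For the remainder integral, the fractional-derivative representation of \cite{HN}, combined with $R(a)=0$ and the H\"older estimate above, gives
\[ \Bigl| \int_a^b R(u)\,dy_u \Bigr| \leq K \|f'\|_\lambda \|x\|^{1+\lambda}_{\beta(a,b)} \|y\|_{\beta(a,b)} (b-a)^{(2+\lambda)\beta}. \]
Factoring $\|x\|^{1+\lambda}_{\beta(a,b)} \|y\|_{\beta(a,b)} = \|x\|^\lambda_{\beta(a,b)} \cdot \|x\|_{\beta(a,b)} \|y\|_{\beta(a,b)} \leq \|x\|^\lambda_{\beta(a,b)} \Phi_{\beta(a,b)}(x,y)$ and adding the three bounds gives exactly the inequality in the statement.

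\smallskip

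The main obstacle is treating the remainder integral $\int_a^b R\,dy_u$ rigorously when $\beta<\tfrac12$: a naive Young integration is unavailable because $\beta+\beta<1$, so the fractional-derivative construction of \cite{HN} is genuinely needed. The saving feature is that the H\"older norm of $R$ on $[a,b]$ carries an additional $(b-a)^{\lambda\beta}$ factor coming from $R(a)=0$ and the second-order character of the expansion, and the assumption $\lambda > \frac{1}{\beta}-2$ is precisely what makes $(2+\lambda)\beta>1$, ensuring that the Weyl-type fractional derivatives appearing in the estimate are absolutely integrable and that the resulting power of $(b-a)$ is the positive one recorded in the statement.
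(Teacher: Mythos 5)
Your overall skeleton --- freeze at $a$, write $\int_a^b f(x_u)\,dy_u = f(x_a)(y_b-y_a)+f'(x_a)(x\otimes y)_{a,b}+\int_a^b R(u)\,dy_u$, and bound the three pieces --- is indeed the local expansion underlying the construction in \cite{HN} (note that the paper itself gives no proof: it recalls the statement verbatim as Proposition 3.4 of \cite{HN}). The first two terms are handled correctly, but the remainder step has a genuine gap. With $f'$ frozen at the left endpoint, $R$ is only $\beta$-H\"older on $[a,b]$; the sharp increment bound is
\[ |R(u)-R(v)| \;\le\; \|f'\|_\lambda\,\|x\|^{1+\lambda}_{\beta(a,b)}\Big( |u-v|^{(1+\lambda)\beta} + (v-a)^{\lambda\beta}\,|u-v|^{\beta}\Big), \]
and the second summand cannot be removed (away from $a$ there is no cancellation). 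Consequently the fractional-derivative representation cannot be applied to $\int_a^b R\,dy_u$: since $y$ is only $\beta$-H\"older with $\beta<\tfrac12$, the pairing requires $D^{\alpha}_{a+}R$ with $\alpha>1-\beta>\beta$, and for such $\alpha$ the integral $\int_a^u |R(u)-R(v)|\,(u-v)^{-1-\alpha}\,dv$ diverges near $v=u$ because of the $(v-a)^{\lambda\beta}|u-v|^{\beta}$ piece (exponent $\beta-1-\alpha<-1$). The prefactor $(b-a)^{\lambda\beta}$ in your H\"older estimate is a constant and is irrelevant to the existence of the Weyl derivatives, contrary to your closing paragraph. More structurally, $\int h\,dy$ for a generic $h\in C^\beta$ with $2\beta<1$ is not defined at all, so $\int R\,dy$ is itself a rough integral; and your intermediate bound $K \|f'\|_\lambda \|x\|^{1+\lambda}_{\beta(a,b)} \|y\|_{\beta(a,b)} (b-a)^{(2+\lambda)\beta}$ is in fact false as stated, because it omits the area: already for $f$ quadratic, $\int_a^b (x_u-x_a)^2\,dy_u$ carries a genuine $\|x\otimes y\|_{2\beta(a,b)}$ contribution, which can be made large while $\|x\|_{\beta(a,b)}\|y\|_{\beta(a,b)}$ stays small. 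Your final inequality survives only because the right-hand side of the Proposition contains the full $\Phi_{\beta(a,b)}(x,y)$.

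The missing idea, which is what \cite{HN} actually implements, is to compensate at the \emph{running} point rather than at $a$: the quantity $f(x_t)-f(x_s)-f'(x_s)(x_t-x_s)$, with moving base point $s$, is bounded by $\|f'\|_\lambda\|x\|^{1+\lambda}_{\beta(a,b)}|t-s|^{(1+\lambda)\beta}$, and the first-order term then becomes an honest integral of $f'(x)$ against $x\otimes y$, treated in \cite{HN} via a second-order fractional derivative of $(x\otimes y)_{\cdot,b}$; the convergence of both pieces uses exactly $(2+\lambda)\beta>1$, i.e.\ $\lambda>\frac1\beta-2$. Equivalently, in sewing language one takes the germ $A_{s,t}=f(x_s)(y_t-y_s)+f'(x_s)(x\otimes y)_{s,t}$, whose coboundary, by the multiplicative property, is
\[ A_{s,t}-A_{s,u}-A_{u,t} = \big[f(x_s)-f(x_u)+f'(x_s)(x_u-x_s)\big](y_t-y_u) + \big[f'(x_s)-f'(x_u)\big](x\otimes y)_{u,t}, \]
of order $\|f'\|_\lambda\,\|x\|^{\lambda}_{\beta(a,b)}\big(\|x\|_{\beta(a,b)}\|y\|_{\beta(a,b)}+\|x\otimes y\|_{2\beta(a,b)}\big)\,|t-s|^{(2+\lambda)\beta}$ --- note that both summands of $\Phi_{\beta(a,b)}(x,y)$ appear, confirming that the area term cannot be avoided in the remainder. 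Summing over partitions (or running the fractional-calculus estimates of \cite{HN}) then yields precisely the stated inequality. So your decomposition and exponent bookkeeping are right, but the one-point Taylor compensation does not produce a well-defined, let alone estimable, remainder integral; the moving-base-point compensation together with the $x\otimes y$ contribution is what a correct proof requires.
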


	\begin{prop}\label{prop2HN}
		Suppose that $(x,y,x\otimes y)$ and $(y,z,y\otimes z)$ belong to $M_{d,m}^\beta(0,T)$. Let $f:\R^d\longrightarrow\R^m$ be a continuously differentiable function such that $f'$ is $\lambda$-H\"older continuous and bounded, where $\lambda>\frac{1}{\beta}-2$.
		Then, the following estimate holds:
		\begin{eqnarray*}
			&& \Big| \int_a^b f(x_u) \,d_u(y\otimes z)_{\cdot,b} \Big| \leq K \,|f(x_a)| \,\Phi_{\beta(a,b)}(y,z) (b-a)^{2\beta} \\
			&& \hspace{20mm} + \,K \,\left( \|f'\|_\infty + \|f'\|_\lambda \|x\|^\lambda_{\beta(a,b)} (b-a)^{\lambda\beta} \right) \Phi_{\beta(a,b)}(x,y,z) (b-a)^{3\beta},
		\end{eqnarray*}
		where $\Phi_{\beta(a,b)}(x,y,z)$ is defined in (\ref{Phi3}).
	\end{prop}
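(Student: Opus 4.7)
The plan is to split $f(x_u) = f(x_a) + (f(x_u) - f(x_a))$ and estimate the two resulting pieces separately. The constant part gives
\[
\int_a^b f(x_a) \,d_u(y\otimes z)_{\cdot,b} = f(x_a)\bigl[(y\otimes z)_{b,b} - (y\otimes z)_{a,b}\bigr] = -f(x_a)(y\otimes z)_{a,b},
\]
whose modulus is bounded by $|f(x_a)|\,\|y\otimes z\|_{2\beta(a,b)}(b-a)^{2\beta}\le |f(x_a)|\,\Phi_{\beta(a,b)}(y,z)(b-a)^{2\beta}$. This already yields the first term of the claimed estimate.

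For the remaining piece $\int_a^b (f(x_u) - f(x_a))\,d_u(y\otimes z)_{\cdot,b}$, direct Young integration fails because $\beta < \tfrac12$. The plan is to pass to the fractional-derivative representation of the integral used in \cite{HN}: for a suitable $\alpha\in(1-\beta,2\beta)$ (which exists since $\beta>\tfrac13$), the integral is rewritten as a Lebesgue integral of $D_{a+}^\alpha[f(x)-f(x_a)](u)$ against $D_{b-}^{1-\alpha}[(y\otimes z)_{\cdot,b}](u)$. To extract the correct rough-path compensator I would Taylor-expand $f(x_u) - f(x_a) = f'(x_a)(x_u - x_a) + R(u)$, with $|R(u)|\le \|f'\|_\lambda |x_u-x_a|^{1+\lambda}$. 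The linear term $f'(x_a)\int_a^b (x_u-x_a)\,d_u(y\otimes z)_{\cdot,b}$ is the natural pairing controlled by the triple functional $x\otimes(y\otimes z)$, and inserting (\ref{eq_Phi3}) yields a contribution of order $\|f'\|_\infty\,\Phi_{\beta(a,b)}(x,y,z)(b-a)^{3\beta}$. The remainder $R$ then carries the extra factor $\|f'\|_\lambda \|x\|_{\beta(a,b)}^\lambda (b-a)^{\lambda\beta}$ coming from the $\lambda$-H\"older regularity of $f'$ combined with the H\"older regularity of $x$.

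The main obstacle is in this second step: the function $u\mapsto(y\otimes z)_{u,b}$ is only $\beta$-H\"older in its variable (by (\ref{eq_Phi2})), not $2\beta$-H\"older, so the fractional derivatives are not absolutely integrable by a crude estimate. One must use the multiplicative property to split $(y\otimes z)_{u,b} - (y\otimes z)_{a,b}$ into a genuine $2\beta$-H\"older piece and a product piece of the form $(y_u-y_a)\otimes(z_b-z_u)$, and verify that these two contributions assemble precisely into the parts of $\Phi_{\beta(a,b)}(x,y,z)$ involving $\|y\otimes z\|_{2\beta}$ and $\|y\|_\beta\|z\|_\beta$. The cross term coming from $x_u-x_a$ paired against the boundary piece $(y\otimes z)_{a,b}$ must be handled via the multiplicative structure of $x\otimes y$ to produce the remaining term $\|z\|_\beta\|x\otimes y\|_{2\beta}$ in $\Phi_{\beta(a,b)}(x,y,z)$. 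This algebraic bookkeeping, together with verifying the fractional-derivative integrability once the correct subtractions are made, is the technical heart of the proof.
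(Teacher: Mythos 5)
You should know at the outset that the paper contains no proof of this proposition to compare against: it is recalled verbatim as Proposition 3.9 of \cite{HN}, so the only benchmark is Hu and Nualart's own fractional-calculus argument. Measured against that, your outline reconstructs the right proof: the splitting $f(x_u)=f(x_a)+(f(x_u)-f(x_a))$, the identity $\int_a^b f(x_a)\,d_u(y\otimes z)_{\cdot,b}=-f(x_a)(y\otimes z)_{a,b}$ (using $(y\otimes z)_{b,b}=0$) giving the term $|f(x_a)|\,\Phi_{\beta(a,b)}(y,z)(b-a)^{2\beta}$, the first-order Taylor expansion of $f$ whose linear part is paired with $x\otimes(y\otimes z)_{\cdot,b}$ and estimated through \eqref{eq_Phi3}, and the remainder carrying $\|f'\|_\lambda\|x\|^{\lambda}_{\beta(a,b)}(b-a)^{\lambda\beta}$ --- these are precisely the ingredients of the proof in \cite{HN}.

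One point needs sharpening, though you partially flag it yourself. The representation you invoke --- a single Lebesgue pairing of $D^{\alpha}_{a+}\bigl[f(x)-f(x_a)\bigr]$ against $D^{1-\alpha}_{b-}\bigl[(y\otimes z)_{\cdot,b}\bigr]$ --- cannot hold literally: both $u\mapsto f(x_u)$ and $u\mapsto(y\otimes z)_{u,b}$ are only $\beta$-H\"older (the latter by \eqref{eq_Phi2}), so one would need simultaneously $\alpha<\beta$ and $1-\alpha<\beta$, which is impossible for $\beta<\frac12$. In \cite{HN} the integral is therefore \emph{defined} by a compensated formula in which the second-order term built from $x\otimes(y\otimes z)$ (a pairing of $D^{2\alpha-1}_{a+}f'(x)$ with a $D^{2-2\alpha}_{b-}$-derivative of the compensator) is present from the start; the compensator is not extracted mid-proof by Taylor expansion --- your expansion is rather the heuristic showing that definition is the correct one. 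Relatedly, your window $\alpha\in(1-\beta,2\beta)$ is not quite enough: the remainder estimate forces $\alpha<(1+\lambda)\beta$ as well, and it is exactly the hypothesis $\lambda>\frac1\beta-2$ that makes the window $(1-\beta,\min(2\beta,(1+\lambda)\beta))$ nonempty --- this is where that assumption enters, and your sketch never uses it explicitly. Your multiplicative-property splitting $(y\otimes z)_{u,b}-(y\otimes z)_{a,b}=-(y\otimes z)_{a,u}-(y_u-y_a)\otimes(z_b-z_u)$ is the correct device for the integrability of the fractional derivatives, but as written the proposal defers precisely this quantitative core (the assembly of the three terms of $\Phi_{\beta(a,b)}(x,y,z)$ defined in \eqref{Phi3}), which constitutes the bulk of Propositions 3.6--3.9 of \cite{HN}; since the paper under review likewise delegates it to the citation, your plan is a faithful reconstruction in outline rather than a complete independent proof.
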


The following propositions give some estimates useful for the proof of Theorem \ref{thm_determ}. \\
First we give a result for a function $b$ that fulfills conditions {\upshape\bfseries (H2)}.  
	\begin{prop} \label{prop:normbetab}
    	Assume that $b$ satisfies {\upshape\bfseries (H2)}. Let $x,\widetilde{x}\in C(0,T;\R^d)$ such that $\|x\|_{\infty}\leq N$ and $\|\widetilde{x}\|_{\infty}\leq N$. Then, for $0\leq a<b\leq T$,
    		\[ \bigg| \int_a^b \big[ b(u,x_u)-b(u,\widetilde{x}_u) \big] \,du \bigg| \leq L_N (b-a) \|x-\widetilde{x}\|_{\infty(a,b)}. \]
	\end{prop}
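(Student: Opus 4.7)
The plan is to prove this by a direct triangle-inequality argument, since the only tools needed are the Lipschitz hypothesis in \textbf{(H2)(1)} and the definition of the supremum norm on $(a,b)$.

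First I would bring the absolute value inside the Lebesgue integral, obtaining
\[
\bigg|\int_a^b \big[b(u,x_u)-b(u,\widetilde{x}_u)\big]\,du\bigg| \leq \int_a^b \big|b(u,x_u)-b(u,\widetilde{x}_u)\big|\,du.
\]
Next, since both paths satisfy $\|x\|_\infty\leq N$ and $\|\widetilde{x}\|_\infty\leq N$, the pointwise bounds $|x_u|\leq N$ and $|\widetilde{x}_u|\leq N$ hold for every $u\in[0,T]$. Therefore assumption \textbf{(H2)(1)} applies with constant $L_N$ and gives, for each $u\in[a,b]$,
\[
\big|b(u,x_u)-b(u,\widetilde{x}_u)\big| \leq L_N\,|x_u-\widetilde{x}_u|.
\]

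Finally, bounding $|x_u-\widetilde{x}_u|$ by $\|x-\widetilde{x}\|_{\infty(a,b)}$ uniformly on $[a,b]$ and integrating in $u$ over an interval of length $(b-a)$ yields the desired estimate
\[
\int_a^b \big|b(u,x_u)-b(u,\widetilde{x}_u)\big|\,du \leq L_N \|x-\widetilde{x}\|_{\infty(a,b)}\,(b-a).
\]
There is no real obstacle here: the statement is an immediate consequence of the local Lipschitz condition, and the only subtle point to flag in writing it up is that \textbf{(H2)(1)} is being applied with the same $N$ for both paths simultaneously, which is legitimate because both satisfy $\|\cdot\|_\infty\leq N$.
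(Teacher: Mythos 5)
Your proof is correct and follows exactly the route the paper intends: the paper's proof is the one-line remark that the estimate ``follows easily using the Lipschitz property of hypothesis \textbf{(H2)}'', and your write-up simply spells out that same triangle-inequality and Lipschitz argument in full detail. Nothing to add or fix.
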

\begin{proof}
	It follows easily using the Lipschitz property of hypothesis {\bfseries(H2)}.
\end{proof}
    
In order to give some results for a function $f$ under conditions {\upshape\bfseries (H1)} we need to introduce some notation:
\begin{eqnarray*}
				G^1_{\beta(a,b)}(f,x,\widetilde{x},y) &=& K \Big[ \|y\|_{\beta} \,\|f'\|_\infty + \big( \|f''\|_\infty + \|f''\|_\lambda (\|x\|^\lambda_{\beta(a,b)} + \|\widetilde{x}\|^\lambda_{\beta(a,b)}) (b-a)^{\lambda\beta} \big) \\
				&& \hspace{10mm} \times \big( \Phi_{\beta(a,b)}(x,y) + \|y\|_{\beta} \,\|\widetilde{x}\|_{\beta(a,b)} \big) \Big], \\
				G^2_{\beta(a,b)}(f,x,\widetilde{x},y) &=& K \Big[ \|y\|_{\beta} \,\|f'\|_\infty + \|f''\|_\infty \big( \Phi_{\beta(a,b)}(x,y) + \|y\|_{\beta} \,\|\widetilde{x}\|_{\beta(a,b)} \big)(b-a)^{\beta} \Big], \\
				G^3_{\beta(a,b)}(f,\widetilde{x}) &=& K \Big[ \|f'\|_\infty + \|f''\|_\infty \|\widetilde{x}\|_{\beta(a,b)} (b-a)^{\beta} \Big].
			\end{eqnarray*}

The first result is Proposition 6.4 of Hu and Nualart \cite{HN}:
	\begin{prop} \label{prop:normbetaf}
		Suppose that $(x,y,x\otimes y)$ and $(\widetilde{x},y,\widetilde{x}\otimes y)$ belong to $M^\beta_{d,m}(0,T)$. Assume that $f$ satisfies {\upshape\bfseries (H1)}. Then, for $0\leq a<b\leq T$,
			\begin{eqnarray*}
			&&	\bigg| \int_a^b \left[f(x_u)-f(\widetilde{x}_u)\right] \,dy_u \bigg| \leq G^1_{\beta(a,b)}(f,x,\widetilde{x},y) \,(b-a)^{2\beta} \|x-\widetilde{x}\|_{\infty(a,b)} \\
				&& + G^2_{\beta(a,b)}(f,x,\widetilde{x},y) \,(b-a)^{2\beta} \|x-\widetilde{x}\|_{\beta(a,b)} 
				+ G^3_{\beta(a,b)}(f,\widetilde{x}) \,(b-a)^{2\beta} \|(x-\widetilde{x})\otimes y\|_{2\beta(a,b)}.
			\end{eqnarray*}
	\end{prop}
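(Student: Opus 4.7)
The result is stated as Proposition 6.4 of \cite{HN}, so my plan is to reconstruct that proof. The strategy is to expand $f(x_u) - f(\widetilde{x}_u)$ via a first-order Taylor formula and insert the result into the fractional-derivative representation of $\int_a^b \cdot \, dy_u$ that underlies Proposition \ref{prop1HN}, and then to identify how each of the three norms $\|x - \widetilde{x}\|_{\infty(a,b)}$, $\|x - \widetilde{x}\|_{\beta(a,b)}$, and $\|(x - \widetilde{x})\otimes y\|_{2\beta(a,b)}$ appears.

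First, I would write
\[
f(x_u) - f(\widetilde{x}_u) = (x_u - \widetilde{x}_u) \cdot A_u, \qquad A_u := \int_0^1 f'(\widetilde{x}_u + \theta(x_u - \widetilde{x}_u)) \, d\theta,
\]
and record the estimates $\|A\|_{\infty(a,b)} \le \|f'\|_\infty$ together with a $\beta$-H\"older bound on $A$ obtained by applying either $\|f''\|_\infty$ (producing a factor $\|x\|_{\beta(a,b)} + \|\widetilde{x}\|_{\beta(a,b)}$) or $\|f''\|_\lambda$ (producing the $\|f''\|_\lambda(\|x\|^\lambda_{\beta(a,b)} + \|\widetilde{x}\|^\lambda_{\beta(a,b)})(b-a)^{\lambda\beta}$ tail). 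These two options generate the two summands that appear inside $G^1_{\beta(a,b)}(f,x,\widetilde{x},y)$. Then I would apply the Hu--Nualart integral formula to $\int_a^b A_u (x_u - \widetilde{x}_u) \, dy_u$, treating the integrand as a smooth function of the joint path $(x,\widetilde{x})$.

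The representation yields three types of terms, which I would match to the three norms on the right-hand side: (i) a principal boundary term proportional to $A_a(x_a - \widetilde{x}_a)(y_b - y_a)$, bounded by $\|f'\|_\infty \|y\|_{\beta(a,b)} (b-a)^\beta \|x - \widetilde{x}\|_{\infty(a,b)}$, which feeds the $\|y\|_\beta \|f'\|_\infty$ parts of $G^1$ and $G^2$; (ii) first-order correction terms containing the multiplicative functional, which I rewrite via the identity $(x\otimes y)_{s,t} - (\widetilde{x}\otimes y)_{s,t} = ((x-\widetilde{x})\otimes y)_{s,t}$ to produce on one side the factor $\|(x - \widetilde{x})\otimes y\|_{2\beta(a,b)}$ giving the $G^3$ contribution, and on the other a $\|x - \widetilde{x}\|_{\infty(a,b)}$ factor multiplied either by $\|x \otimes y\|_{2\beta(a,b)}$ or by $\|\widetilde{x}\|_{\beta(a,b)} \|y\|_{\beta(a,b)}$, which together assemble the $\Phi_{\beta(a,b)}(x,y) + \|y\|_\beta \|\widetilde{x}\|_{\beta(a,b)}$ factors appearing in $G^1$ and $G^2$; (iii) remainder terms involving $\|x - \widetilde{x}\|_{\beta(a,b)}$ coming from the $\beta$-H\"older norm of $A_u(x_u - \widetilde{x}_u)$, which are absorbed into the $G^2$ contribution through Proposition \ref{prop1HN} applied to the perturbed path.

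The main obstacle will be the bookkeeping: deciding precisely when to invoke $\|f''\|_\infty$ versus $\|f''\|_\lambda$ in order to reproduce the two summands inside $G^1$, and applying the multiplicative-functional identity carefully so that no surviving term depends on $\|\widetilde{x}\otimes y\|_{2\beta(a,b)}$ alone, which is absent from the claimed bound. Once this accounting is performed and constants are collected, the estimate takes exactly the claimed form with the three factors $G^1$, $G^2$, $G^3$ in front of the three differential norms.
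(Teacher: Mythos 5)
The paper contains no proof of this statement: it is imported verbatim as Proposition~6.4 of Hu and Nualart \cite{HN}, so the only meaningful comparison is with the argument in that reference. At the level of strategy your outline is faithful to it: the first-order Taylor expansion with integral remainder, the case split between $\|f''\|_\infty$ and $\|f''\|_\lambda$ that generates the two summands inside $G^1$, and the linearity identity $(x\otimes y)-(\widetilde{x}\otimes y)=(x-\widetilde{x})\otimes y$ that isolates the $G^3$ contribution are exactly the right ingredients, and your final accounting of which norm each family of terms feeds is the correct one.

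There is, however, one step that would not go through as written. For $\beta\in(\frac13,\frac12)$ the integral $\int_a^b g_u\,dy_u$ of this theory is \emph{not} defined for an arbitrary integrand: it is defined for integrands that are smooth functions of a path whose second-level pairing with $y$ is prescribed. So you cannot ``apply the Hu--Nualart integral formula to $\int_a^b A_u(x_u-\widetilde{x}_u)\,dy_u$'' as a black box --- the product path $A\cdot(x-\widetilde{x})$ carries no a priori tensor $\left(A(x-\widetilde{x})\right)\otimes y$, so that integral has not been defined at the point where you invoke it. The two legitimate routes are: (i) your parenthetical alternative made precise, namely set $z=(x,\widetilde{x})$, $g(\xi,\eta)=f(\xi)-f(\eta)$, work with $(z,y,z\otimes y)$ where $z\otimes y=(x\otimes y,\widetilde{x}\otimes y)$, and then \emph{redo} the fractional-derivative estimates exploiting that $g$ vanishes on the diagonal --- note that a naive call to Proposition~\ref{prop1HN} applied to $z$ only yields $\Phi_{\beta(a,b)}(z,y)$ with no smallness in $x-\widetilde{x}$, which is why your plan to absorb the remainder ``through Proposition~\ref{prop1HN} applied to the perturbed path'' cannot work as stated; or (ii) the route of \cite{HN}: subtract the two compensated fractional-derivative representations of $\int_a^b f(x_u)\,dy_u$ and $\int_a^b f(\widetilde{x}_u)\,dy_u$, which are both already defined, and estimate each difference term, this being where the three norms $\|x-\widetilde{x}\|_{\infty(a,b)}$, $\|x-\widetilde{x}\|_{\beta(a,b)}$ and $\|(x-\widetilde{x})\otimes y\|_{2\beta(a,b)}$ arise. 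This is a repairable defect of execution rather than of conception, but as it stands the middle of your argument rests on an undefined integral and on an estimate that the cited proposition does not deliver.
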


We can also deduce the following estimate:  
	\begin{prop} \label{prop:normbetarf}
		Suppose that $(x,y,x\otimes y)$ and $(x_{\cdot-r},y,x_{\cdot-r}\otimes y)$ belong to $M^\beta_{d,m}(0,T)$. Assume that $f$ satisfies {\upshape\bfseries (H1)}. Then, for $0\leq a<b\leq T$,
			\begin{eqnarray} \label{eq:normbetarf}
				\bigg| \int_a^b \left[f(x_u)-f(x_{u-r})\right] \,dy_u \bigg| &\leq& \,G^1_{\beta(a,b)}(f,x,x_{\cdot-r},y) \,(b-a)^{2\beta} \|x-x_{\cdot-r}\|_{\infty(a,b)} \nonumber \\
				&& + \,G^2_{\beta(a,b)}(f,x,x_{\cdot-r},y) \,(b-a)^{2\beta} \|x-x_{\cdot-r}\|_{\beta(a,b)} \nonumber \\
				&& + \,G^3_{\beta(a,b)}(f,x_{\cdot-r}) \,(b-a)^{2\beta} \|(x-x_{\cdot-r})\otimes y\|_{2\beta(a,b)}. \nonumber 
			\end{eqnarray}
	\end{prop}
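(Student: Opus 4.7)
The plan is to derive Proposition \ref{prop:normbetarf} as a direct specialization of Proposition \ref{prop:normbetaf}, simply substituting $\widetilde{x} := x_{\cdot-r}$. The hypothesis of Proposition \ref{prop:normbetarf} gives us exactly what we need: both $(x,y,x\otimes y)$ and $(x_{\cdot-r},y,x_{\cdot-r}\otimes y)$ lie in $M^\beta_{d,m}(0,T)$, and $f$ satisfies {\upshape\bfseries(H1)}. These are precisely the data required to invoke Proposition \ref{prop:normbetaf}.

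First I would verify that the shifted path plays the role of $\widetilde{x}$ cleanly. Its $\beta$-H\"older seminorm on $(a,b)$ and the seminorm $\Phi_{\beta(a,b)}(x_{\cdot-r},y)$ are finite by assumption, so the closed-form Hu--Nualart integral $\int_a^b f(x_{u-r})\, dy_u$ is well-defined in the fractional calculus framework of \cite{HN}, and Propositions \ref{prop1HN}--\ref{prop2HN} apply with $x$ replaced by $x_{\cdot-r}$. Thus the integral $\int_a^b [f(x_u)-f(x_{u-r})]\, dy_u$ makes sense as the difference of two such integrals.

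Next I would substitute $\widetilde{x} = x_{\cdot-r}$ into the conclusion of Proposition \ref{prop:normbetaf}. Term-by-term, the three contributions on the right-hand side become exactly $G^1_{\beta(a,b)}(f,x,x_{\cdot-r},y)(b-a)^{2\beta}\|x-x_{\cdot-r}\|_{\infty(a,b)}$, $G^2_{\beta(a,b)}(f,x,x_{\cdot-r},y)(b-a)^{2\beta}\|x-x_{\cdot-r}\|_{\beta(a,b)}$, and $G^3_{\beta(a,b)}(f,x_{\cdot-r})(b-a)^{2\beta}\|(x-x_{\cdot-r})\otimes y\|_{2\beta(a,b)}$, which are precisely the three terms in \eqref{eq:normbetarf}.

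The only real point to check is the meaning of $(x-x_{\cdot-r})\otimes y$ appearing in the $G^3$ term. By bilinearity of the multiplicative functional construction (and since both components share the same $y$), one takes $(x-x_{\cdot-r})\otimes y := x\otimes y - x_{\cdot-r}\otimes y$, which inherits the multiplicative property and the $2\beta$-H\"older control from its summands. No further obstacle arises, since no genuinely new estimate beyond Proposition \ref{prop:normbetaf} is needed; the proposition is essentially a notational rewriting tailored to the delay setting.
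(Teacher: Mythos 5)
Your proposal is correct and coincides with the paper's own proof, which consists of the single observation that the proposition is the particular case of Proposition \ref{prop:normbetaf} with $\widetilde{x}\equiv x_{\cdot-r}$. The additional checks you include (well-definedness of the integral and the interpretation $(x-x_{\cdot-r})\otimes y := x\otimes y - x_{\cdot-r}\otimes y$) are harmless elaborations of that same specialization.
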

\begin{proof}
	The proposition is a particular case of Proposition \ref{prop:normbetaf} with $\widetilde{x}\equiv x_{\cdot-r}$. 
\end{proof}

Let us introduce more useful notation:
\begin{eqnarray*}
				 G^4_{\beta(a,b)}(f,x,\widetilde{x},y,z) &=& K \Big[ \|f'\|_\infty \Phi_{\beta(a,b)}(y,z) \\
				 && \hspace{10mm} + \big( \|f''\|_\infty + \|f''\|_\lambda (\|x\|^\lambda_{\beta(a,b)} + \|\widetilde{x}\|^\lambda_{\beta(a,b)}) (b-a)^{\lambda\beta} \big) \\
				 && \hspace{15mm} \times \big( \Phi_{\beta(a,b)}(x,y,z) + \|\widetilde{x}\|_{\beta(a,b)} \Phi_{\beta(a,b)}(y,z) \big) \Big] ,\\
				 G^5_{\beta(a,b)}(f,x,\widetilde{x},y,z) &=& K \Big[ \big( \|f'\|_\infty + \|f''\|_\infty \|\widetilde{x}\|_{\beta(a,b)}(b-a)^{\beta} \big) \Phi_{\beta(a,b)}(y,z)\\
				 && \hspace{10mm} + \|f''\|_\infty \Phi_{\beta(a,b)}(x,y,z) (b-a)^\beta \Big], \\
				 G^6_{\beta(a,b)}(f,\widetilde{x},z) &=& K G^3_{\beta(a,b)}(f,\widetilde{x}) \,\|z\|_{\beta(a,b)}. \\
			\end{eqnarray*}
From the previous results it is possible to prove the following two propositions:  
	\begin{prop} \label{prop:normbetafx}
		Suppose that $(x,y,x\otimes y)$, $(\widetilde{x},y,\widetilde{x}\otimes y)$ and $(y,z,y\otimes z)$ belong to $M^\beta_{d,m}(0,T)$. Assume that $f$ satisfies {\upshape\bfseries (H1)}. Then, for every $0\leq a<b\leq T$,
			\begin{eqnarray} \label{eq:normbetafx}
				 && \bigg| \int_a^b \left[f(x_u)-f(\widetilde{x}_u)\right] \,d_u(y\otimes z)_{\cdot,b} \bigg|  \leq G^4_{\beta(a,b)}(f,x,\widetilde{x},y,z) \,(b-a)^{3\beta} \|x-\widetilde{x}\|_{\infty(a,b)} \nonumber \\
				 && \hspace{30mm} + G^5_{\beta(a,b)}(f,x,\widetilde{x},y,z) \,(b-a)^{3\beta} \|x-\widetilde{x}\|_{\beta(a,b)} \nonumber \\
				 && \hspace{30mm} + G^6_{\beta(a,b)}(f,\widetilde{x},z) \,(b-a)^{3\beta} \,\|(x-\widetilde{x})\otimes y\|_{2\beta(a,b)} .
			\end{eqnarray}
	\end{prop}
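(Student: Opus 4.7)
The approach is to lift Proposition \ref{prop:normbetaf} from the integrator $dy_u$ to the tensor-valued integrator $d_u(y\otimes z)_{\cdot,b}$, in exactly the spirit in which Proposition \ref{prop2HN} extends Proposition \ref{prop1HN}. The starting point is the explicit representation of $\int_a^b \phi(u)\,d_u(y\otimes z)_{\cdot,b}$ in terms of Weyl-type fractional derivatives given in \cite{HN}. From this representation, together with the $2\beta$-H\"older regularity of $y\otimes z$ and the $\beta$-H\"older regularity of $y$ and $z$, one extracts a general quantitative bound in which the integrand $\phi$ enters through its pointwise value at $a$ and through its relevant H\"older-type data, while the exponents $(b-a)^{2\beta}$ and $(b-a)^{3\beta}$ reflect the order of the multiplicative functional $y\otimes z$.

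I would then apply this general bound with $\phi(u) = f(x_u) - f(\widetilde x_u)$ and use the parametric Taylor identity
\[
f(x_u) - f(\widetilde x_u) = (x_u - \widetilde x_u)\int_0^1 f'\bigl(\widetilde x_u + \theta(x_u - \widetilde x_u)\bigr)\,d\theta,
\]
refined at second order (isolating $f'(\widetilde x_u)(x_u-\widetilde x_u)$ with a remainder governed by $\|f''\|_\infty$ and $\|f''\|_\lambda$) to pick up the correct dependence on $\|\widetilde x\|_{\beta(a,b)}$ and on the $\lambda$-H\"older seminorm of $f''$ appearing in $G^4$. These representations allow me to bound the H\"older-type data of $\phi$ in terms of three quantities: the pointwise value $|\phi(a)|$ and the $\beta$-seminorm $\|\phi\|_{\beta(a,b)}$ (both controlled by $\|x-\widetilde x\|_{\infty(a,b)}$ and $\|x-\widetilde x\|_{\beta(a,b)}$ via the mean value theorem and the chain rule), and the genuinely quadratic data, which is precisely $\|(x-\widetilde x)\otimes y\|_{2\beta(a,b)}$. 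Substituting these into the general bound and reorganising using the definitions of $\Phi_{\beta(a,b)}(y,z)$ and $\Phi_{\beta(a,b)}(x,y,z)$ then reproduces the three coefficient groups $G^4$, $G^5$, $G^6$.

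The delicate step is the third contribution, involving $\|(x-\widetilde x)\otimes y\|_{2\beta(a,b)}$ with coefficient $G^6$. This term does not follow from purely pointwise or $\beta$-H\"older estimates on $x-\widetilde x$: it appears only when one carefully tracks where the multiplicative functional $(x-\widetilde x)\otimes y$ enters the expansion of $\int \phi\,d(y\otimes z)$. Concretely, when the leading piece $f'(\widetilde x_u)(x_u-\widetilde x_u)$ is integrated against $d_u(y\otimes z)_{\cdot,b}$ through a Proposition \ref{prop2HN}-type bound, one must split off the part controlled by $\|(x-\widetilde x)\otimes y\|_{2\beta(a,b)}$ from the parts controlled by $\|x-\widetilde x\|_{\beta(a,b)}$ and $\|x-\widetilde x\|_{\infty(a,b)}$, and verify that the accompanying coefficient depends on $\widetilde x$ and $z$ only through $\|\widetilde x\|_{\beta(a,b)}$ and $\|z\|_{\beta(a,b)}$, so that it takes the simple form $G^6_{\beta(a,b)}(f,\widetilde x,z)=K\,G^3_{\beta(a,b)}(f,\widetilde x)\,\|z\|_{\beta(a,b)}$. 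This bookkeeping, and checking that no full $\Phi$-quantities creep into $G^6$, is the main obstacle; once it is in place, summing all contributions and factoring out the common $(b-a)^{3\beta}$ yields the stated estimate \eqref{eq:normbetafx}.
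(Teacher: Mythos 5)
Your proposal is correct, and structurally it follows the same route as the paper: everything reduces to transporting the difference estimate of Proposition \ref{prop:normbetaf} to the integrator path $u\mapsto(y\otimes z)_{u,b}$ and then splitting the compound quadratic functional. The difference is one of execution. You plan to re-open the Weyl-derivative representation of $\int_a^b\phi(u)\,d_u(y\otimes z)_{\cdot,b}$ and redo the second-order Taylor expansion of $f(x_u)-f(\widetilde{x}_u)$; that would work, but it amounts to re-proving Proposition \ref{prop:normbetaf} from scratch for the new integrator. The paper instead invokes Proposition \ref{prop:normbetaf} as a black box with $y$ replaced by $(y\otimes z)_{\cdot,b}$, and converts the resulting coefficients $G^1,G^2,G^3$ into $G^4,G^5,G^6$ via \eqref{eq_Phi2}, \eqref{eq_Phi3} and their consequence \eqref{eq_Phi4}, namely $\|(y\otimes z)_{\cdot,b}\|_{\beta(a,b)}\le\Phi_{\beta(a,b)}(y,z)(b-a)^{\beta}$ and $\Phi_{\beta(a,b)}\big(x,(y\otimes z)_{\cdot,b}\big)\le K\,\Phi_{\beta(a,b)}(x,y,z)(b-a)^{\beta}$; each substitution supplies the extra factor $(b-a)^{\beta}$ that upgrades $(b-a)^{2\beta}$ to $(b-a)^{3\beta}$. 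Moreover, the step you single out as the main obstacle --- isolating the $\|(x-\widetilde{x})\otimes y\|_{2\beta(a,b)}$ contribution with the clean coefficient $G^6=K\,G^3\,\|z\|_{\beta(a,b)}$ and checking that no full $\Phi$-quantity creeps into it --- is in the paper a one-line application of \eqref{eq_Phi3} to the multiplicative functional $\big(x-\widetilde{x},\,y,\,(x\otimes y)-(\widetilde{x}\otimes y)\big)$: since $\Phi_{\beta(a,b)}(x-\widetilde{x},y,z)=\|x-\widetilde{x}\|_{\beta(a,b)}\,\Phi_{\beta(a,b)}(y,z)+\|z\|_{\beta(a,b)}\,\|(x-\widetilde{x})\otimes y\|_{2\beta(a,b)}$, the quantity $\|(x-\widetilde{x})\otimes(y\otimes z)_{\cdot,b}\|_{2\beta(a,b)}$ splits exactly into a $G^5$-type piece and the $G^6$ piece. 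So your flagged bookkeeping is resolved by linearity of the $\otimes$-data in the first argument, not by any tracking inside the fractional-calculus expansion; with that observation your argument closes, just less economically than the paper's.
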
    
\begin{proof}
	To simplify the proof we will assume $d=m=1$. Observe that from inequalities (\ref{eq_Phi2}) and (\ref{eq_Phi3}) we obtain
		\begin{equation} \label{eq_Phi4}
			\Phi_{\beta(a,b)}(x,y\otimes z) \leq K \Phi_{\beta(a,b)}(x,y,z) (b-a)^\beta
		\end{equation}
	and
		\begin{eqnarray} \label{eq_otimesxyz}
			\big\| (x-\widetilde{x})\otimes(y\otimes z)_{\cdot,b} \big\|_{2\beta(a,b)} &\leq& K \,\Phi_{\beta(a,b)}(y,z) (b-a)^{\beta} \,\| x-\widetilde{x} \|_{\beta(a,b)} \nonumber \\
			&& \hspace{5mm} + K \,\|z\|_{\beta(a,b)} (b-a)^{\beta} \,\| (x-\widetilde{x})\otimes y \|_{2\beta(a,b)}. \nonumber 
		\end{eqnarray}
	The proof of the proposition is obtained applying Proposition \ref{prop:normbetaf} and using inequalities \eqref{eq_Phi2}, \eqref{eq_Phi3}, \eqref{eq_Phi4}. 
\end{proof}

\smallskip    
	\begin{prop} \label{prop:normbetarfx}
		Suppose that $(x,y,x\otimes y)$, $(x_{\cdot-r},y,x_{\cdot-r}\otimes y)$ and $(y,z,y\otimes z)$ belong to $M^\beta_{d,m}(0,T)$. Assume that $f$ satisfies {\upshape\bfseries (H1)}. Then, for $0\leq a<b\leq T$,
			\begin{eqnarray} \label{eq:normbetarf2}
				&& \bigg| \int_a^b \left[f(x_u)-f(x_{u-r})\right] \,d_u(y\otimes z)_{\cdot,b} \bigg| \\
				&& \hspace{25mm} \leq \,G^4_{\beta(a,b)}(f,x,x_{\cdot-r},y,z) \,(b-a)^{3\beta} \|x-x_{\cdot-r}\|_{\infty(a,b)} \nonumber \\
				&& \hspace{30mm} + \,G^5_{\beta(a,b)}(f,x,x_{\cdot-r},y,z) \,(b-a)^{3\beta} \|x-x_{\cdot-r}\|_{\beta(a,b)} \nonumber \\
				&& \hspace{30mm} + \,G^6_{\beta(a,b)}(f,x_{\cdot-r},z) \,(b-a)^{3\beta} \|(x-x_{\cdot-r})\otimes y\|_{2\beta(a,b)}. \nonumber 
			\end{eqnarray}
	\end{prop}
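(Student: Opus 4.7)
The plan is to derive this estimate as an immediate specialization of Proposition~\ref{prop:normbetafx}, in exact parallel with how Proposition~\ref{prop:normbetarf} was obtained from Proposition~\ref{prop:normbetaf}. Concretely, I would set $\widetilde{x} \equiv x_{\cdot - r}$ and verify that the hypotheses of Proposition~\ref{prop:normbetafx} are satisfied under the assumptions of the present proposition.

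First, the current hypotheses give exactly the three multiplicative functionals needed: $(x,y,x\otimes y)$, $(x_{\cdot-r}, y, x_{\cdot-r}\otimes y)$ and $(y,z,y\otimes z)$ all belong to $M^\beta_{d,m}(0,T)$, so the triple $(x, \widetilde{x}, y, z) = (x, x_{\cdot-r}, y, z)$ satisfies the integrand conditions of Proposition~\ref{prop:normbetafx}. The assumption~\textbf{(H1)} on $f$ is the same in both propositions, so no additional regularity needs to be checked.

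Second, I would apply inequality~\eqref{eq:normbetafx} term by term with $\widetilde{x}$ replaced by $x_{\cdot-r}$. The three resulting summands involve $G^4_{\beta(a,b)}(f, x, x_{\cdot-r}, y, z)$, $G^5_{\beta(a,b)}(f, x, x_{\cdot-r}, y, z)$ and $G^6_{\beta(a,b)}(f, x_{\cdot-r}, z)$ multiplied by $(b-a)^{3\beta}$ together with the norms $\|x - x_{\cdot-r}\|_{\infty(a,b)}$, $\|x - x_{\cdot-r}\|_{\beta(a,b)}$ and $\|(x - x_{\cdot-r})\otimes y\|_{2\beta(a,b)}$ respectively, which is exactly the bound stated in~\eqref{eq:normbetarf2}.

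There is essentially no obstacle, since the statement is genuinely a corollary: the only conceptual point to check is that the trace $x_{\cdot-r}$ plays the role of a valid companion to $x$ in the multiplicative setting, and this is granted by hypothesis. For this reason I expect the written proof to be a single sentence identifying the substitution, with no further computation required.
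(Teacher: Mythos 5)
Your proposal is correct and matches the paper's proof exactly: the paper also obtains Proposition \ref{prop:normbetarfx} as the particular case $\widetilde{x}\equiv x_{\cdot-r}$ of Proposition \ref{prop:normbetafx}, in one sentence. Your verification that the hypotheses transfer under this substitution is sound and requires no further work.
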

\begin{proof}
	The proposition is a particular case of Proposition \ref{prop:normbetafx} with $\widetilde{x}\equiv x_{\cdot-r}$. 
\end{proof}
  
We conclude this section with a general result on $\beta$-H\"older functions:
	\begin{lemma} \label{lemma_yyr}
	 	Let $y:[0,T]\rightarrow \R^m$ be a $\beta $-H\"older continuous function and $\beta'=\beta-\varepsilon$ for $\varepsilon>0$, then
		 	\begin{eqnarray}
				 \|y-y_{\cdot-r}\|_{\infty(r,T)} &\leq& \|y\|_{\beta} \,r^{\beta}, \label{yyr_inf} \\
				 \|y-y_{\cdot-r}\|_{\beta'(r,T)} &\leq& 2 \|y\|_{\beta} \,r^{\varepsilon}. \label{yyr_beta'} 
		 	\end{eqnarray}
	\end{lemma}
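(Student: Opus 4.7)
The first inequality \eqref{yyr_inf} is immediate from the definition of the $\beta$-H\"older norm: for each $t \in [r,T]$ we have $|y_t - y_{t-r}| \leq \|y\|_{\beta}(t-(t-r))^{\beta} = \|y\|_{\beta}\,r^{\beta}$, and taking the supremum over $t$ gives the claim.

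For \eqref{yyr_beta'}, the plan is to bound the numerator $|(y_v-y_{v-r})-(y_u-y_{u-r})|$ for $r \leq u < v \leq T$ in \emph{two} different ways and then choose the better one depending on the size of $v-u$ relative to $r$. The two bounds are
\[
 |(y_v-y_{v-r})-(y_u-y_{u-r})| \leq |y_v-y_u| + |y_{v-r}-y_{u-r}| \leq 2\|y\|_{\beta}(v-u)^{\beta},
\]
obtained by grouping the four terms according to the second argument, and
\[
 |(y_v-y_{v-r})-(y_u-y_{u-r})| \leq |y_v-y_{v-r}| + |y_u-y_{u-r}| \leq 2\|y\|_{\beta}\,r^{\beta},
\]
obtained by grouping them according to the shift.

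Dividing by $(v-u)^{\beta'} = (v-u)^{\beta-\varepsilon}$, I would split into two cases. If $v-u \leq r$, the first bound yields
\[
 \frac{|(y_v-y_{v-r})-(y_u-y_{u-r})|}{(v-u)^{\beta'}} \leq 2\|y\|_{\beta}(v-u)^{\varepsilon} \leq 2\|y\|_{\beta}\,r^{\varepsilon}.
\]
If $v-u \geq r$, the second bound yields, using $\varepsilon-\beta \leq 0$ (so that $(v-u)^{\varepsilon-\beta} \leq r^{\varepsilon-\beta}$),
\[
 \frac{|(y_v-y_{v-r})-(y_u-y_{u-r})|}{(v-u)^{\beta'}} \leq 2\|y\|_{\beta}\,r^{\beta}(v-u)^{\varepsilon-\beta} \leq 2\|y\|_{\beta}\,r^{\varepsilon}.
\]
Taking the supremum over $r \leq u < v \leq T$ gives \eqref{yyr_beta'}. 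There is no real obstacle here; the only subtlety is remembering to group the four terms in both possible ways so that each case $v-u \lessgtr r$ has a matching estimate to balance against $(v-u)^{\beta-\varepsilon}$.
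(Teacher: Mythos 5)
Your proof is correct and follows essentially the same route as the paper: the first bound is read off directly from the H\"older property over an increment of length $r$, and the second is obtained by grouping the four terms in the two possible ways and splitting into the cases $v-u\leq r$ and $v-u\geq r$, exactly as in the paper's estimates. No differences worth noting.
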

\begin{proof}
 	On one hand,
	 	\begin{eqnarray*}
	 		\|y-y_{\cdot-r}\|_{\infty(r,T)} = \sup_{t\in[r,T]} \frac{|y_t-y_{t-r}|}{r^\beta} \cdot r^\beta \leq \|y\|_{\beta} r^\beta.
	 	\end{eqnarray*}
	On the other hand, we have
		\begin{eqnarray}
			&& \sup_{\substack{s<t\in[r,T] \\ t-s\leq r}} \frac{\big| (y-y_{\cdot-r})_t-(y-y_{\cdot-r})_s \big|}{(t-s)^{\beta'}}\label{aa1} \\
		 	&& \hspace{5mm} \leq \sup_{\substack{s<t\in[r,T] \\ t-s\leq r}} \frac{|y_t-y_s|}{(t-s)^{\beta}} \cdot \frac{(t-s)^{\beta}}{(t-s)^{\beta'}} + \sup_{\substack{s<t\in[r,T] \\ t-s\leq r}} \frac{|y_{t-r}-y_{s-r}|}{(t-s)^{\beta}} \cdot \frac{(t-s)^{\beta}}{(t-s)^{\beta'}}  \leq 2 \|y\|_{\beta} \,r^{\varepsilon}\nonumber
		\end{eqnarray}
	and
		\begin{eqnarray}
			&& \sup_{\substack{s<t\in[r,T] \\ t-s\geq r}} \frac{\big| (y-y_{\cdot-r})_t-(y-y_{\cdot-r})_s \big|}{(t-s)^{\beta'}}\label{aa2} \\
		 	&& \hspace{5mm} \leq \sup_{\substack{s<t\in[r,T] \\ t-s\geq r}} \frac{|y_t-y_{t-r}|}{r^{\beta}} \cdot \frac{r^{\beta}}{(t-s)^{\beta'}} + \sup_{\substack{s<t\in[r,T] \\ t-s\geq r}} \frac{|y_s-y_{s-r}|}{r^{\beta}} \cdot \frac{r^{\beta}}{(t-s)^{\beta'}}  \leq 2 \|y\|_{\beta} \,r^{\varepsilon}.\nonumber
		\end{eqnarray}
The proof finishes putting together (\ref{aa1}) and (\ref{aa2}).
\end{proof}

\section{Estimates of the solutions} \label{dSDE_EstimatesSol}

In this Section we get some estimates on the solutions of our equations. Let us recall that $\varepsilon>0$  with $\beta-2\varepsilon>0$ and $\lambda>\frac{1}{\beta-\varepsilon}-2$. Recall also that $\beta'=\beta-\varepsilon$.

First of all, let us introduce
$\widehat{x}^r_t=x^r_{t-r}$ where  $x^r$ is the solution of \eqref{eq:delay}. Then  $(\widehat{x}^r\otimes y)_{s,t}$  can be expressed  as follows:
	\begin{itemize}
		\item for $s<t\in[0,r):$ 
\begin{eqnarray}
			(\widehat{x}^r\otimes y)_{s,t} = (\eta_{\cdot-r}\otimes y)_{s,t} = \int_s^t (y_t-y_u) \,d\eta_{u-r},
\label{eq:delay-rxy1}\end{eqnarray}
		\item for $s\in[0,r)$ and $t\in[r,T]$,
			\begin{eqnarray} \label{eq:delay-rxy2}
				(\widehat{x}^r\otimes y)_{s,t} &=& (\eta_{\cdot-r}\otimes y)_{s,r} + \int_r^t (y_t-y_u) b(u-r,\widehat{x}^r_u) \,du \nonumber \\
				&& \hspace{5mm} + \int_r^t \sigma(\widehat{x}^r_{u-r}) \,d_u(y_{\cdot-r}\otimes y)_{\cdot,t} + (\eta_0-\eta_{s-r})\otimes(y_t-y_r), \nonumber
			\end{eqnarray}
		\item for $s<t\in[r,T]:$\quad 
			$(\widehat{x}^r\otimes y)_{s,t} =\int_s^t (y_t-y_u) b(u-r,\widehat{x}^r_u) \,du + \int_s^t \sigma(\widehat{x}^r_{u-r}) \,d_u(y_{\cdot-r}\otimes y)_{\cdot,t}. $
	\end{itemize}

We will prove that the norms $\|\widehat{x}^r\|_{\beta'}$ and $\|\widehat{x}^r\otimes y\|_{2\beta'}$ are bounded and their upper bound does not depend on $r$. To this aim, the following lemma will be useful:
	\begin{lemma} \label{lemma_cotesxr}
		Let $(\eta_{\cdot-r},y,\eta_{\cdot-r}\otimes y)\in M^\beta_{d,m}(0,r)$ and $(y_{\cdot-r},y,y_{\cdot-r}\otimes y)\in M^\beta_{d,d}(r,T)$. Let $(x^r,y,x^r\otimes y)\in M^\beta_{d,m}(0,T)$ be the solution of the equation \eqref{eq:delay}. Then,
			\begin{equation} \label{cotaxr1}
			  \|\widehat{x}^r\|_{\beta'} \leq \|\widehat{x}^r\|_{\beta'(0,r)} + \|\widehat{x}^r\|_{\beta'(r,T)}
			\end{equation}
		and
			\begin{equation} \label{cotaxr2}
			  \|\widehat{x}^r\otimes y\|_{2\beta'} \leq \|\widehat{x}^r\otimes y\|_{2\beta'(0,r)} + \|\widehat{x}^r\otimes y\|_{2\beta'(r,T)} + \|\eta\|_{\beta'(-r,0)}\|y\|_{\beta'}.
			\end{equation}
	\end{lemma}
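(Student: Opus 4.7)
The plan is to reduce both inequalities to the standard device of splitting an increment at the point $u=r$, since the definitions of $\widehat{x}^r$ and of $\widehat{x}^r\otimes y$ change form there (they follow the initial data $\eta_{\cdot-r}$ on $[0,r]$ and the driven dynamics on $[r,T]$). Fix $0\le s<t\le T$; the three cases $s,t\in[0,r]$, $s,t\in[r,T]$, and $s<r<t$ are treated separately, the first two being immediate from the definition of the Hölder seminorms.

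For \eqref{cotaxr1}, in the mixed case I would write
$\widehat{x}^r_t-\widehat{x}^r_s=(\widehat{x}^r_t-\widehat{x}^r_r)+(\widehat{x}^r_r-\widehat{x}^r_s)$,
bound $|\widehat{x}^r_t-\widehat{x}^r_r|\le \|\widehat{x}^r\|_{\beta'(r,T)}(t-r)^{\beta'}$ and $|\widehat{x}^r_r-\widehat{x}^r_s|\le\|\widehat{x}^r\|_{\beta'(0,r)}(r-s)^{\beta'}$, and use $(t-r)^{\beta'},(r-s)^{\beta'}\le(t-s)^{\beta'}$ to conclude.

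For \eqref{cotaxr2}, in the mixed case I apply the multiplicative property of $\widehat{x}^r\otimes y$ with intermediate point $u=r$:
\[
(\widehat{x}^r\otimes y)_{s,t}=(\widehat{x}^r\otimes y)_{s,r}+(\widehat{x}^r\otimes y)_{r,t}+(\widehat{x}^r_r-\widehat{x}^r_s)\otimes(y_t-y_r).
\]
The first two summands are controlled, as before, by $\|\widehat{x}^r\otimes y\|_{2\beta'(0,r)}(t-s)^{2\beta'}$ and $\|\widehat{x}^r\otimes y\|_{2\beta'(r,T)}(t-s)^{2\beta'}$ respectively, using $(r-s),(t-r)\le(t-s)$. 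For the cross term I use the key observation that $\widehat{x}^r_v=\eta_{v-r}$ on $[0,r]$, so $\widehat{x}^r_r-\widehat{x}^r_s=\eta_0-\eta_{s-r}$; then
\[
|(\eta_0-\eta_{s-r})\otimes(y_t-y_r)|\le \|\eta\|_{\beta'(-r,0)}(r-s)^{\beta'}\,\|y\|_{\beta'}(t-r)^{\beta'}\le \|\eta\|_{\beta'(-r,0)}\|y\|_{\beta'}(t-s)^{2\beta'},
\]
which produces exactly the last summand in \eqref{cotaxr2}.

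I do not expect a genuine obstacle: the lemma is essentially a concatenation statement for Hölder seminorms of a function and of a multiplicative functional, and the only subtle point is that the multiplicative property forces the extra cross-term which, thanks to the identification $\widehat{x}^r_r-\widehat{x}^r_s=\eta_0-\eta_{s-r}$, becomes a product of a Hölder increment of $\eta$ and of $y$, accounting precisely for the term $\|\eta\|_{\beta'(-r,0)}\|y\|_{\beta'}$ on the right-hand side.
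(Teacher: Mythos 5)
Your proposal is correct and follows essentially the same route as the paper: both split the mixed case $s<r\le t$ at the point $u=r$ (using $(r-s)^{\beta'},(t-r)^{\beta'}\le(t-s)^{\beta'}$ for \eqref{cotaxr1}, and the multiplicative property with intermediate point $r$ for \eqref{cotaxr2}), and both handle the cross term via the identification $\widehat{x}^r_r-\widehat{x}^r_s=\eta_0-\eta_{s-r}$, which yields exactly the extra summand $\|\eta\|_{\beta'(-r,0)}\|y\|_{\beta'}$. Your write-up is in fact slightly more explicit than the paper's, which leaves the cross-term estimate as an implicit ``same argument as before.''
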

  
\begin{proof}
	On one hand, observe that
		\begin{equation*}
			\|\widehat{x}^r\|_{\beta'} \leq \max \Bigg( \sup_{0\leq s<t<r} \frac{|\widehat{x}^r_t-\widehat{x}^r_s|}{(t-s)^{\beta'}} \,, \sup_{0\leq s<r\leq t\leq T} \frac{|\widehat{x}^r_t-\widehat{x}^r_s|}{(t-s)^{\beta'}} \,, \sup_{r\leq s<t\leq T} \frac{|\widehat{x}^r_t-\widehat{x}^r_s|}{(t-s)^{\beta'}} \Bigg), \\
		\end{equation*}
	and
		\begin{eqnarray*}
			\sup_{0\leq s<r\leq t\leq T} \frac{|\widehat{x}^r_t-\widehat{x}^r_s|}{(t-s)^{\beta'}} 
			&\leq& \sup_{r\leq s<t\leq T} \frac{|\widehat{x}^r_t-\widehat{x}^r_s|}{(t-s)^{\beta'}} + \sup_{0\leq s<t<r} \frac{|\widehat{x}^r_t-\widehat{x}^r_s|}{(t-s)^{\beta'}}.
		\end{eqnarray*}
	So we easily get \eqref{cotaxr1}.

	On the other hand, observe that from the multiplicative property we obtain
		\begin{eqnarray*}
			\sup_{0\leq s<r\leq t\leq T} \frac{|(\widehat{x}^r\otimes y)_{s,t}|}{(t-s)^{2\beta'}} &\leq& \sup_{0\leq s<r\leq t\leq T} \bigg[ \frac{|(\widehat{x}^r\otimes y)_{s,r}|}{(t-s)^{2\beta'}} + \frac{|(\widehat{x}^r\otimes y)_{r,t}|}{(t-s)^{2\beta'}} \\
			&& \hspace{30mm} + \frac{|(\widehat{x}^r_r-\widehat{x}^r_s)\otimes(y_t-y_r)|}{(t-s)^{2\beta'}} \bigg],
		\end{eqnarray*}
	and using the same argument as before \eqref{cotaxr2} follows easily.
\end{proof}

\smallskip

Now we can give the following result:
	\begin{prop} \label{cotesxr}
		Let $(\eta_{\cdot-r},y,\eta_{\cdot-r}\otimes y)\in M^\beta_{d,m}(0,r)$ and $(y_{\cdot-r},y,y_{\cdot-r}\otimes y)\in M^\beta_{d,d}(r,T)$ for all $r \le r_0$. Assume that $\sigma$ and $b$ satisfy {\upshape\bfseries (H1)} and {\upshape\bfseries (H2)} respectively, and both satisfy {\upshape\bfseries (H3)}. Let $(x^r,y,x^r\otimes y)\in M^\beta_{d,m}(0,T)$ be the solution of the equation \eqref{eq:delay}. Assume also that  $\|\eta\|_{\beta(-r_0,0)}<\infty$ and $\sup_{r\leq r_0}\|\eta_{\cdot -r}\otimes y\|_{2\beta(0,r)}<\infty$. Then, for $r\leq r_0$, we have the following estimates:
			\begin{eqnarray}
				\|\widehat{x}^r\|_{\infty(0,T+r)} &\leq& M_{\eta,y}, \label{normsupxr} \\
				\|\widehat{x}^r\|_{\beta'(0,T+r)} &\leq& K \rho_{\eta,b,\sigma} \Lambda_y (1+2M_{\eta,y}), \label{normbetaxr} \\
				\|\widehat{x}^r\otimes y\|_{2\beta'(0,T+r)} &\leq& K \rho_{\eta,b,\sigma} \Lambda_y \big( 2 + (T+r_0) (K \rho_{\eta,b,\sigma} \Lambda_y)^{\frac1\beta} \big), \label{norm2betaxroy}
			\end{eqnarray}
		where $K\geq1$ and
			\begin{eqnarray} \label{rho}
				\rho_{\eta,b,\sigma} :&=&2\|\eta\|_{\beta(-r_0,0)} + \|b\|_\infty T^{1-\beta} + \|\sigma\|_\infty + \|\sigma'\|_\infty + \|\sigma'\|_\lambda,\\
			 \label{Lambda}
				\Lambda_y :&=& \|y\|_\beta + \max(1,\|y\|_\beta^2+\|y\otimes y\|_{2\beta}),
			\end{eqnarray}
		and
			\begin{equation*} 
				M_{\eta,y} := |\eta_0| + (T+r_0) (K\rho_{\eta,b,\sigma}\Lambda_y)^{\frac1\beta} + 1.
			\end{equation*}
	\end{prop}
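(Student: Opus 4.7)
The plan is the greedy-partition technique from \cite{HN} adapted to the delay setting via Lemma \ref{lemma_cotesxr}. On $[0,r]$, $\widehat{x}^r = \eta_{\cdot-r}$, so all three norms are controlled directly by the hypotheses on $\eta$ together with $\sup_{r \leq r_0} \|\eta_{\cdot-r} \otimes y\|_{2\beta(0,r)} < \infty$; these contributions get absorbed into $\rho_{\eta,b,\sigma}$ in the final constants. The substance of the argument lies on $[r, T+r]$.

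For any sub-interval $[a,b] \subset [r, T+r]$ of length at most $\Delta$ (to be fixed below), I would first express $\widehat{x}^r_t - \widehat{x}^r_s = x^r_{t-r} - x^r_{s-r}$ using equation \eqref{eq:delay}, bound the drift with Proposition \ref{prop:normbetab}, and estimate $\int \sigma(\widehat{x}^r_u)\,dy_u$ via Proposition \ref{prop1HN} applied to the multiplicative functional $(\widehat{x}^r, y, \widehat{x}^r \otimes y)$. Dividing by $(b-a)^{\beta'}$ and using $\beta' < \beta$ yields an inequality of the form
\begin{equation*}
    \|\widehat{x}^r\|_{\beta'(a,b)} \leq K \rho_{\eta,b,\sigma} \Lambda_y \Big( 1 + \Delta^{\beta-\beta'} \|\widehat{x}^r\|_{\beta'(a,b)} + \Delta^{2\beta - \beta'} \|\widehat{x}^r \otimes y\|_{2\beta'(a,b)} \Big).
\end{equation*}
An analogous treatment of the representation of $(\widehat{x}^r \otimes y)_{s,t}$, using Proposition \ref{prop2HN} on $\int \sigma(\widehat{x}^r_{\cdot-r})\,d(y_{\cdot-r} \otimes y)$, gives a parallel self-referential bound for $\|\widehat{x}^r \otimes y\|_{2\beta'(a,b)}$. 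Choosing $\Delta = (K \rho_{\eta,b,\sigma} \Lambda_y)^{-1/\beta}$ with $K$ large enough, all self-referential terms absorb and one obtains uniform local bounds $\|\widehat{x}^r\|_{\beta'(a,b)} \leq C_1$ and $\|\widehat{x}^r \otimes y\|_{2\beta'(a,b)} \leq C_2$.

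The global assertions now follow by covering $[r, T+r]$ by $N \leq (T+r_0)(K \rho_{\eta,b,\sigma} \Lambda_y)^{1/\beta} + 1$ sub-intervals of size $\leq \Delta$. Since $|\widehat{x}^r_t - \widehat{x}^r_s| \leq C_1 \Delta^{\beta'}$ on each piece (at most $1$ for $K$ large), telescoping gives \eqref{normsupxr}; the estimates \eqref{normbetaxr}--\eqref{norm2betaxroy} then come from Lemma \ref{lemma_cotesxr}, gluing the contributions from $(0,r)$ and $(r, T+r)$ together with the cross-term $\|\eta\|_{\beta'(-r,0)}\|y\|_{\beta'}$ already present in \eqref{cotaxr2}. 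The hardest part will be the bookkeeping of the several multiplicative-functional quantities ($\Phi_{\beta(a,b)}(\widehat{x}^r, y)$, $\Phi_{\beta(a,b)}(\widehat{x}^r_{\cdot-r}, y_{\cdot-r}, y)$) and the sub-polynomial factors $\|\widehat{x}^r\|^\lambda_{\beta(a,b)}(b-a)^{\lambda\beta}$ coming from the $\lambda$-H\"older regularity of $\sigma'$; the definitions \eqref{rho}--\eqref{Lambda} are tailored so that all of them are dominated by $K \rho_{\eta,b,\sigma} \Lambda_y$ up to a $\Delta^\beta$ factor uniformly in $r \leq r_0$, and once this is established the absorption step works cleanly.
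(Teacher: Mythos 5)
Your overall scaffolding (subintervals of length $(K\rho_{\eta,b,\sigma}\Lambda_y)^{-1/\beta}$, a telescoping argument for the supremum norm, gluing $[0,r)$ and $[r,T+r]$ via Lemma \ref{lemma_cotesxr}) matches the paper, but the central mechanism you propose --- a direct self-referential estimate on the solution that ``absorbs'' --- has a genuine gap, and it is precisely the gap the paper's actual proof is built to avoid. The inequalities that Propositions \ref{prop1HN} and \ref{prop2HN} actually furnish are \emph{not} linear in $\|\widehat{x}^r\|_{\beta'(a,b)}$ and $\|\widehat{x}^r\otimes y\|_{2\beta'(a,b)}$: the $\lambda$-H\"older regularity of $\sigma'$ produces contributions of the form $\|\sigma'\|_\lambda\,\|\widehat{x}^r\|^{\lambda}_{\beta'(a,b)}(b-a)^{\lambda\beta'}\,\Phi_{\beta'(a,b)}(\widehat{x}^r,y)$, i.e.\ terms of order $1+\lambda$ in the unknown norms, so your displayed inequality omits the superlinear part. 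For an inequality of the shape $F\le C\bigl(1+\epsilon F+\epsilon F^{1+\lambda}\bigr)$, making $\epsilon$ small (your ``$K$ large enough'') does \emph{not} force $F\le 2C$: the set of values of $F$ consistent with it has an unbounded upper branch, so absorption alone proves nothing. Your closing claim that the $\lambda$-factors ``are dominated by $K\rho_{\eta,b,\sigma}\Lambda_y$ up to a $\Delta^\beta$ factor uniformly in $r$'' presupposes exactly the uniform bound being proved. To close your direct route you would need an additional continuity (``first-crossing'') argument: fix $a$, observe that $b\mapsto\|\widehat{x}^r\|_{\beta'(a,b)}$ is nondecreasing, continuous, and tends to $0$ as $b\downarrow a$ --- this is where $\beta'<\beta$ and the qualitative finiteness of $\|\widehat{x}^r\|_{\beta}$ (known since the solution lies in $M^\beta_{d,m}$) are essential --- and then rule out that it ever reaches a threshold of size $2K\rho_{\eta,b,\sigma}\Lambda_y$ on intervals of length at most $\Delta$. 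Nothing in your write-up plays this role.

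The paper sidesteps the nonlinearity by never estimating the solution directly. It introduces the Picard-type map $J=(J_1,J_2)$ and the set $C^y$ defined by the sup bound together with \eqref{cond2}--\eqref{cond3}; for any \emph{input} in $C^y$ the defining conditions give $(t-s)^\beta\|\widehat{x}^r\|_{\beta(s,t)}\le1$ and $(t-s)^\beta\|\widehat{x}^r\otimes y\|_{2\beta(s,t)}\le1$ on small intervals (inequalities \eqref{cota7}--\eqref{cota8-r}), so the troublesome $\lambda$-terms are bounded by constants when estimating the \emph{output} $(J_1,y,J_2)$, and invariance $J(C^y)\subset C^y$ follows by purely linear bookkeeping. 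The uniform estimates are then transferred to the solution through the iterates $\widehat{x}^{r(n)}$, equicontinuity, and extraction of a subsequence converging in the $\beta'$-H\"older norm. So either supply the invariant-set/iteration argument as in the paper, or make the first-crossing bootstrap explicit; as written, your absorption step fails at the decisive point.
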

  
\begin{proof}
    To simplify the proof we will assume $d=m=1$. Assume also that $r\leq r_0$. 
    
    \vskip 5pt
    First we observe that, if $\|\eta\|_{\beta(-r_0,0)}<C$ and $\sup_{r\leq r_0}\|\eta_{\cdot -r}\otimes y\|_{2\beta(0,r)}<C'$, with $C$ and $C'$ two positive constants, then $\|\eta\|_{\beta'(-r_0,0)}<Cr_0^\varepsilon$ and $\sup_{r\leq r_0}\|\eta_{\cdot -r}\otimes y\|_{2\beta'(0,r)}<C'r_0^{2\varepsilon}$.
    
    \vskip 5pt
    Secondly, notice that by \eqref{eq:delay-rxy1} 
	    \[ \|\eta_{\cdot-r}\otimes y\|_{2\beta(0,r)} = \sup_{s,t\in[0,r)} \frac{\big| \int_s^t (y_t-y_u) \,d\eta_{u-r} \big|}{(t-s)^{2\beta}} \leq \|\eta_{\cdot-r}\|_{\beta(0,r)} \|y\|_{\beta} \leq \|\eta\|_{\beta(-r_0,0)} \|y\|_{\beta}. \]
    To prove the result we will follow the ideas of Theorem 4.1 of \cite{BN}. Consider the mapping $J:M_{1,1}^\beta(0,T+r)\rightarrow M_{1,1}^\beta(0,T+r)$ given by $J(\widehat{x}^r,y,\widehat{x}^r\otimes y)=(J_1,y,J_2)$ where $J_1$ and $J_2$ are the right-hand sides of the definition of $\widehat{x}^r$  and $(\widehat{x}^r\otimes y)$  respectively:
		\begin{eqnarray} \label{eqJ1}
			&& J_1(\widehat{x}^r,y,\widehat{x}^r\otimes y)(t) =
			\left\{ \begin{array}{ll}
				\displaystyle \eta_{t-r}, & 0\leq t<r \nonumber\\
				\displaystyle \eta_0 + \int_0^{t-r} b(u,\widehat{x}^r_{u+r}) \,du + \int_0^{t-r} \sigma(\widehat{x}^r_u) \,dy_u, & r\leq t\leq T
			\end{array} \right. \nonumber \\
		\end{eqnarray}
		\begin{eqnarray} \label{eqJ2}
			&& J_2(\widehat{x}^r,y,\widehat{x}^r\otimes y)(s,t) \nonumber \\
			&& \hspace{5mm} = \left\{
			\begin{array}{ll}
				\displaystyle \int_s^t (y_t-y_u) \,d\eta_{u-r}, & 0\leq s\leq t<r \\
				\displaystyle (\eta_0-\eta_{s-r})\otimes(y_t-y_r) + \int_s^r (y_r-y_u) \,d\eta_{u-r}  +& \int_r^t (y_t-y_u) b(u-r,\widehat{x}^r_u) \,du \\
				\displaystyle \hspace{15mm} + \int_r^t \sigma(\widehat{x}^r_{u-r}) \,d_u(y_{\cdot-r}\otimes y)_{\cdot,t}, & 0\leq s<r\leq t\leq T \\
				\displaystyle \int_s^t (y_t-y_u) b(u-r,\widehat{x}^r_u) \,du \\
				\displaystyle \hspace{15mm} + \int_s^t \sigma(\widehat{x}^r_{u-r}) \,d_u(y_{\cdot-r}\otimes y)_{\cdot,t} & r\leq s\leq t\leq T
			\end{array} \right. \nonumber 
		\end{eqnarray}
	Remark that this mapping is well-defined because $(J_1,\,y,\,J_2)$ is a real-valued $\beta-$H\"older continuous multiplicative functional for each $(\widehat{x}^r,y,\widehat{x}^r\otimes y)\in M_{1,1}^\beta(0,T)$.

	\vskip 5pt
    Now we bound the H\"older norms of $J_1$ and $J_2$ using Proposition \ref{prop1HN} and Proposition \ref{prop2HN}. Let $s<t\in[0,T]$, we have
	    \begin{itemize}
	    	\item for $s<t\in[0,r)$
		      	\begin{eqnarray}
			        \|J_1\|_{\beta(s,t)} &\leq& \|\eta\|_{\beta(-r_0,0)}, \label{cotaJ1_0r} \\
			        \|J_2\|_{2\beta(s,t)} &\leq& \|\eta\|_{\beta(-r_0,0)} \|y\|_{\beta}, \label{cotaJ2_0r}
		      	\end{eqnarray}
	      	\item for $s<t\in[r,T]$
		      	\begin{eqnarray}
			      	& &\|J_1\|_{\beta(s,t)} \leq \|b\|_{\infty} (t-s)^{1-\beta} + K \|\sigma\|_{\infty} \,\|y\|_{\beta} \nonumber \\
			      	&& \hspace{2mm} + K \left( \|\sigma'\|_\infty + \|\sigma'\|_\lambda \|\widehat{x}^r_{\cdot-r}\|^\lambda_{\beta(s,t)} (t-s)^{\lambda\beta} \right) \times \Phi_{\beta(s,t)}(\widehat{x}^r_{\cdot-r},y_{\cdot-r}) (t-s)^{\beta},  \label{cotaJ1_rT} \\
			      	& &\|J_2\|_{2\beta(s,t)} \leq \|b\|_{\infty} \,\|y\|_{\beta} (t-s)^{1-\beta} + K \|\sigma\|_{\infty} \Phi_{\beta(s,t)}(y_{\cdot-r},y) \nonumber \\
			      	&& \hspace{2mm} + K \left( \|\sigma'\|_\infty + \|\sigma'\|_\lambda \|\widehat{x}^r_{\cdot-r}\|^\lambda_{\beta(s,t)} (t-s)^{\lambda\beta} \right)   \Phi_{\beta(s,t)}(\widehat{x}^r_{\cdot-r},y_{\cdot-r},y) (t-s)^{\beta}, \label{cotaJ2_rT}
		      	\end{eqnarray}
	      	\item for $s\in[0,r)$ and $t\in[r,T]$
		      	\begin{eqnarray}
			        \|J_1\|_{\beta(s,t)} &\leq& \|J_1\|_{\beta(s,r)}+ \|J_1\|_{\beta(r,t)} 
			        \leq \|\eta\|_{\beta(-r_0,0)} + \|b\|_{\infty} (t-r)^{1-\beta} + K \|\sigma\|_{\infty} \,\|y\|_{\beta} \nonumber \\
			        && \hspace{1mm} + K \left( \|\sigma'\|_\infty + \|\sigma'\|_\lambda \|\widehat{x}^r_{\cdot-r}\|^\lambda_{\beta(r,t)} (t-r)^{\lambda\beta} \right)  \times \Phi_{\beta(r,t)}(\widehat{x}^r_{\cdot-r},y_{\cdot-r}) (t-r)^{\beta}, \nonumber
\end{eqnarray}
\begin{eqnarray}
			        \|J_2\|_{2\beta(s,t)} &\leq& \|J_2\|_{2\beta(s,r)}+ \|J_2\|_{2\beta(r,t)} \nonumber \\
			        &\leq& 2 \|\eta\|_{\beta(-r_0,0)} \|y\|_{\beta} + \|b\|_{\infty} \,\|y\|_{\beta} (t-r)^{1-\beta} + K \|\sigma\|_{\infty} \Phi_{\beta(r,t)}(y_{\cdot-r},y) \nonumber \\
			        && \hspace{2mm} + K \left( \|\sigma'\|_\infty + \|\sigma'\|_\lambda \|\widehat{x}^r_{\cdot-r}\|^\lambda_{\beta(r,t)} (t-r)^{\lambda\beta} \right)  \Phi_{\beta(r,t)}(\widehat{x}^r_{\cdot-r},y_{\cdot-r},y) (t-r)^{\beta}. \nonumber 
		      	\end{eqnarray}
	    \end{itemize}
	For $s<t\in[r,T]$, we set
		\begin{equation*}
			(\widehat{x}^r_{\cdot-r}\otimes y_{\cdot-r})_{s,t} := (\widehat{x}\otimes y)_{s-r,t-r}.
		\end{equation*}
	In Section 5 of \cite{BMR} it is proved that it is a $\beta$-H\"older continuous multiplicative functional. \\
	We proceed dividing the proof in two steps.

	\vskip 5pt
	{\underline{Step 1:}} We will find a set $C^y$ of elements $(\widehat{x}^r,y,\widehat{x}^r\otimes y)\in M_{1,1}^\beta(0,T)$ such that $J(C^y)\subset C^{y}$. Recall definitions of $\rho_{\eta,b,\sigma}$ and $\Lambda_y$ from \eqref{rho} and \eqref{Lambda}, respectively, and set
		\begin{equation*}
			\widetilde{\Delta}_y := \big( K\rho_{\eta,b,\sigma} \Lambda_y \big)^{-\frac1\beta}.
		\end{equation*}
	Let $C^y$ be the set of elements $(\widehat{x}^r,y,\widehat{x}^r\otimes y)\in M_{1,1}^\beta(0,T)$ satisfying the following conditions:
		\begin{eqnarray}
			\|\widehat{x}^r\|_{\infty} &\leq& M_{\eta,y}, \nonumber\\
			\sup_{0<t-s\leq\widetilde{\Delta}_y} \|\widehat{x}^r\|_{\beta(s,t)} &\leq& K\rho_{\eta,b,\sigma} (\|y\|_\beta+1), \label{cond2} \\
			\sup_{0<t-s\leq\widetilde{\Delta}_y} \|\widehat{x}^r\otimes y\|_{2\beta(s,t)} &\leq& K\rho_{\eta,b,\sigma} (\|y\|_\beta+\|y\|_\beta^2+\|y_{\cdot-r}\otimes y\|_{2\beta}). \label{cond3}
		\end{eqnarray}
	We take $s,t\in[0,T]$ such that
		\begin{equation} \label{cota1}
			0 < t-s \leq \widetilde{\Delta}_y,
		\end{equation}
	and then we have
		\begin{equation} \label{cota2}
			(t-s)^\beta \leq \widetilde{\Delta}_y^{\beta} \leq \frac{1}{K\rho_{\eta,b,\sigma}(\|y\|_\beta+1)}
		\end{equation}
	and
		\begin{equation} \label{cota3}
			(t-s)^\beta \leq \widetilde{\Delta}_y^{\beta} \leq \frac{1}{K\rho_{\eta,b,\sigma}(\|y\|_\beta+\|y\|_\beta^2+\|y_{\cdot-r}\otimes y\|_{2\beta})}.
		\end{equation}
	Suppose that $(\widehat{x}^r,y,\widehat{x}^r\otimes y)\in C^y$, then using \eqref{cond2}, \eqref{cota2} and \eqref{cond3}, \eqref{cota3} respectively, we have
		\begin{eqnarray}
			(t-s)^\beta \|\widehat{x}^r\|_{\beta(s,t)} &\leq& 1, \label{cota7} \\
			(t-s)^\beta \|\widehat{x}^r\otimes y\|_{2\beta(s,t)} &\leq& 1.  \label{cota8}
		\end{eqnarray}
	Now observe that, if $s,t\in[r,T]$ satisfy \eqref{cota1}, then $s-r,t-r\in[0,T]$ also satisfy this condition. As a consequence,
		\begin{equation} \label{cota7-r}
			(t-s)^\beta \|\widehat{x}^r_{\cdot-r}\|_{\beta(s,t)} \leq 1
		\end{equation}
	and
		\begin{equation} \label{cota8-r}
			(t-s)^\beta \|\widehat{x}^r_{\cdot-r}\otimes y_{\cdot-r}\|_{2\beta(s,t)} \leq 1.
		\end{equation}
	From the last inequality it easily follows that
		\begin{eqnarray} \label{cotaPhi}
			&& \Phi_{\beta(s,t)}(\widehat{x}^r_{\cdot-r},y_{\cdot-r},y) (t-s)^{\beta} \nonumber \\
			&& \hspace{15mm} = \Big[ \|\widehat{x}^r_{\cdot-r}\|_{\beta(s,t)} \|y_{\cdot-r}\|_{\beta(s,t)} \|y\|_{\beta(s,t)} + \|y\|_{\beta(s,t)} \|\widehat{x}^r_{\cdot-r}\otimes y_{\cdot-r}\|_{2\beta(s,t)} \nonumber \\
			&& \hspace{25mm} + \|\widehat{x}^r_{\cdot-r}\|_{\beta(s,t)} \|y_{\cdot-r}\otimes y\|_{2\beta(s,t)} \Big] (t-s)^{\beta} \nonumber \\
			&& \hspace{15mm} \leq \|y\|_\beta+\|y\|_\beta^2+\|y_{\cdot-r}\otimes y\|_{2\beta}.
		\end{eqnarray}
Observe also  that if $s\in[0,r)$ and $t\in[r,T]$ satisfy \eqref{cota1}, then $t-r\leq\widetilde{\Delta}_y$ and all the previous inequality are satisfied if we change the interval $(s,t) $ to the interval $(r,t)$ for $t\in[r,T]$. \\
	By expressions from \eqref{cotaJ1_0r} to \eqref{cotaJ2_rT} and from \eqref{cota7} to \eqref{cotaPhi} we easily get that
		\begin{eqnarray} \label{cotaJ1_2}
			\|J_1\|_{\beta(s,t)} &\leq& \|\eta\|_{\beta(-r_0,0)} + \|b\|_{\infty} T^{1-\beta} + K \|\sigma\|_{\infty} \,\|y\|_{\beta} \nonumber \\
			&& \hspace{5mm} + K \left( \|\sigma'\|_\infty + \|\sigma'\|_\lambda \right) (\|y\|_{\beta} + 1) \nonumber \\
			&\leq& K \rho_{\eta,b,\sigma} (\|y\|_{\beta} + 1)
		\end{eqnarray}
	and
		\begin{eqnarray} \label{cotaJ2_2}
			\|J_2\|_{2\beta(s,t)} &\leq& 2 \|\eta\|_{\beta(-r_0,0)} \|y\|_{\beta} + \|b\|_{\infty} \,\|y\|_{\beta} T^{1-\beta} + K \|\sigma\|_{\infty} (\|y\|_\beta^2 + \|y_{\cdot-r}\otimes y\|_{2\beta}) \nonumber \\
			&& \hspace{5mm} + K \left( \|\sigma'\|_\infty + \|\sigma'\|_\lambda \right) (\|y\|_\beta + \|y\|_\beta^2 + \|y_{\cdot-r}\otimes y\|_{2\beta}) \nonumber \\
			&\leq& K \rho_{\eta,b,\sigma} (\|y\|_\beta + \|y\|_\beta^2 + \|y_{\cdot-r}\otimes y\|_{2\beta}) \nonumber
		\end{eqnarray}
	where $K\geq1$. \\
	It only remains to prove that $\|J_1\|_\infty \leq M_{\eta,y}$. Set $N=\big[(T+r)\widetilde{\Delta}_y^{-1}\big]+1$ and define the partition $t_0=0<t_1<\dots<t_N=T+r$, where $t_i=i\widetilde{\Delta}_y$ for $i=0,\dots,N-1$. The estimates \eqref{cota2} and \eqref{cotaJ1_2} imply
		\[ \sup_{u\in[t_{i-1},t_i]} |(J_1)_u| \leq |(J_1)_{t_{i-1}}| + (t_i-t_{i-1})^\beta \|J_1\|_{\beta(t_{i-1},t_i)} \leq |(J_1)_{t_{i-1}}|+1. \]
	Moreover,
		\[ \sup_{u\in [0,t_i]} |(J_1)_u| \leq \sup_{u\in[0,t_{i-1}]} |(J_1)_u|+1, \]
	and iterating we finally get that
		\[ \sup_{u\in [0,T]} |(J_1)_u| \leq |\eta_0|+N \leq |\eta_0| + T\widetilde{\Delta}_y^{-1} + 1 = M_{\eta,y}. \]
	Hence, $(J_1,y,J_2)\in C^y$.

	\smallskip
	{\underline{Step 2:}} We find a bound for the H\"older norms of $\widehat{x}^r$ and $(\widehat{x}^r\otimes y)$. \\
	We can construct a sequence of functions $\widehat{x}^{r(n)}$ and $(\widehat{x}^r\otimes y)^{(n)}$ such that, 
		$\widehat{x}^{r(0)} = \eta_0 $  and $ (\widehat{x}^r\otimes y)^{(0)}=0$
	and
		\begin{eqnarray*}
			\widehat{x}^{r(n)} &=& J_1\left(\widehat{x}^{r(n-1)}, y, \left(\widehat{x}^r\otimes y\right)^{(n-1)}\right),\\
			\left(\widehat{x}^r\otimes y\right)^{(n)} &=& J_2\left(\widehat{x}^{r(n-1)}, y, \left(\widehat{x}^r\otimes y\right)^{(n-1)}\right).
		\end{eqnarray*}
	Notice that $\big(\widehat{x}^{r(0)}, y,(\widehat{x}^r\otimes y)^{(0)}\big)\in C^y$ and, since we have proved in Step 1 that $J\big(C^y\big)\subset C^y$, we have that $\big(\widehat{x}^{r(n)}, y,(\widehat{x}^r\otimes y)^{(n)}\big)\in C^y$ for each $n$. We estimate $\|\widehat{x}^{r(n)}\|_{\beta}$ as follows:
		\begin{eqnarray}
			\|\widehat{x}^{r(n)}\|_{\beta} &\leq& \sup_{\substack{0\leq s<t\leq T \\ t-s\leq\widetilde{\Delta}_y}} \frac{|\widehat{x}^{r(n)}_t-\widehat{x}^{r(n)}_s|}{(t-s)^\beta} + \sup_{\substack{0\leq s<t\leq T \\
					t-s\geq\widetilde{\Delta}_y}} \frac{|\widehat{x}^{r(n)}_t-\widehat{x}^{r(n)}_s|}{(t-s)^\beta} \nonumber \\
			&\leq& K \rho_{\eta,b,\sigma} (\|y\|_\beta+1) + 2 \widetilde{\Delta}_y^{-\beta} \|\widehat{x}^{r(n)}\|_{\infty} \nonumber \\
			&\leq& K \rho_{\eta,b,\sigma} \Lambda_y (1+2M_{\eta,y}). \label{cota9}
		\end{eqnarray} 
	
	This implies that the sequence of functions $\widehat{x}^{r(n)}$ is equicontinuous and bounded in $C^\beta(0,T)$ and the upper bound does not depend on $r$. So, there exists a subsequence which converges in the $\beta'$-H\"older norm if $\beta'<\beta$ and such that the upper bound of the $\beta'$-H\"older norm does not depend on $r$.
	
	\smallskip
	In a similar way we obtain the same result for $(\widehat{x}^r\otimes y)^{(n)}$. From inequality \eqref{cota8} we obtain that
$$
			\sup_{t_{i-1}\leq s<t\leq t_i} |(\widehat{x}^r\otimes y)^{(n)}_{s,t}| \leq \|(\widehat{x}^r\otimes y)^{(n)}\|_{2\beta(t_{i-1},t_i)} (t_i-t_{i-1})^{2\beta} \leq \widetilde{\Delta}_y^\beta
$$
	and
		\begin{equation*}
			\sup_{0\leq s<t\leq T} |(\widehat{x}^r\otimes y)^{(n)}_{s,t}| \leq N \widetilde{\Delta}_y^\beta \leq T\widetilde{\Delta}_y^{\beta-1} + \widetilde{\Delta}_y^\beta.
		\end{equation*}
	As for \eqref{cota9}, we estimate $\|(\widehat{x}^r\otimes y)^{(n)}\|_{2\beta}$ as follows:
		\begin{eqnarray}
			\|(\widehat{x}^r\otimes y)^{(n)}\|_{2\beta} &\leq& K \rho_{\eta,b,\sigma} (\|y\|_\beta^2+\|y\|_\beta+\|y_{\cdot-r}\otimes y\|_{2\beta}) + T\widetilde{\Delta}_y^{-\beta-1} + \widetilde{\Delta}_y^{-\beta} \nonumber \\
		&\leq& K \rho_{\eta,b,\sigma} \Lambda_y \big( 2 + (T+r_0) (K \rho_{\eta,b,\sigma} \Lambda_y)^{\frac1\beta} \big).\nonumber
		\end{eqnarray}
	This implies that the sequence of functions $\left(\widehat{x}^r\otimes y\right)^{(n)}$ is bounded and equicontinuous in the set of functions $2\beta$-H\"older continuous on $\Delta_T$, and the upper bound does not depend on $r$. So, there exists a subsequence which converges in the $\beta'$-H\"older norm if $\beta'<\beta$ and such that the upper bound of the $\beta'$-H\"older norm does not depend on $r$.

	\smallskip
	Now as $n$ tends to infinity it is easy to see that the limit is a solution, and the limit defines a $\beta-$H\"older continuous multiplicative functional $(\widehat{x}^r,y,\widehat{x}^r\otimes y)$ and this functional satisfies estimates \eqref{normsupxr}, \eqref{normbetaxr} and \eqref{norm2betaxroy}. 
\end{proof}

	\begin{rem} \label{obs_cotes}
		\normalfont In Proposition \ref{cotesxr} it is proved that $\|\widehat{x}^r\|_{\beta'(0,T)} \leq K \rho_{\eta,b,\sigma} \Lambda_y (1+2M_{\eta,y})$, so we have the same bound for $\|x^r\|_{\beta'(r)}$. Moreover, using the ideas in the proof of Proposition \ref{cotesxr} it is possible to prove that $\|x^r\otimes y\|_{2\beta'}$ is bounded and its bound does not depend on $r$.
	\end{rem}

\smallskip

We are also interesting about the behavior of $(x^r-\widehat{x}^r)$ when $r$ tends to zero.

We can write  $(x-x^r)_t$ as follows
  	\begin{eqnarray} \label{def:difference2}
    	(x-x^r)_t &=& \int_0^t \big[b(u,x_u)-b(u,x^r_u)\big] \,du + \int_0^t \big[\sigma(x_u)-\sigma(x^r_u)\big] \,dy_u \nonumber \\
    	&& \hspace{10mm} + \int_0^t \big[\sigma(x^r_u)-\sigma(x^r_{u-r})\big] \,dy_u.
  	\end{eqnarray}
Following the ideas in Section 4 of \cite{BMR}, let us write $\big( (x-x^r)\otimes y \big)_{s,t}$ for $s,t\in[0,T]$:
	\begin{eqnarray} \label{def:differenceotimes}
	& &\big( (x-x^r)\otimes y \big)_{s,t} = \int_s^t (y_t-y_u) \big[b(u,x_u)-b(u,x^r_u)\big] \,du \nonumber \\
	&& \hspace{10mm} + \int_s^t \big[\sigma(x_u)-\sigma(x^r_u)\big] \,d_u(y\otimes y)_{\cdot,t}
	 + \int_s^t \big[\sigma(x^r_u)-\sigma(x^r_{u-r})\big] \,d_u(y\otimes y)_{\cdot,t}.
	\end{eqnarray}

It is also useful to writte the following expressions
	\begin{itemize}
		\item for $s<t\in[0,r)$,
			\begin{eqnarray} \label{eq:diffdelayst0r}
				(x^r-\widehat{x}^r)_t - (x^r-\widehat{x}^r)_s = \eta_{s-r} - \eta_{t-r} + \int_s^t b(u,x^r_u) \,du + \int_s^t \sigma(\eta_{u-r}) \,dy_u, \nonumber \\
			\end{eqnarray}
		\item for $s\in[0,r)$ and $t\in[r,T]$,
			\begin{eqnarray} \label{eq:diffdelayst0rT}
				(x^r-\widehat{x}^r)_t - (x^r-\widehat{x}^r)_s &=& \eta_{s-r} - \eta_0 + \int_{t-r}^t b(u,x^r_u) \,du - \int_0^s b(u,x^r_u) \,du \qquad \nonumber\\
				&& \hspace{5mm} + \int_{t-r}^t \sigma(x^r_{u-r}) \,dy_u - \int_0^s \sigma(\eta_{u-r}) \,dy_u, \nonumber
			\end{eqnarray}
		\item for $s<t\in[r,T]$,
			\begin{eqnarray}
			& &	(x^r-\widehat{x}^r)_t - (x^r-\widehat{x}^r)_s =\label{eq:diffdelaystrT2}\\
				& & = \int_s^t b(u,x^r_u) \,du - \int_{s-r}^{t-r} b(u,x^r_u) \,du + \int_s^t \sigma(x^r_{u-r}) \,dy_u - \int_{s-r}^{t-r} \sigma(x^r_{u-r}) \,dy_u. \nonumber
			\end{eqnarray}
	\end{itemize}
Finally, following the ideas in Section 4 of \cite{BMR}, we define 
	\[ \big( (x^r-\widehat{x}^r)\otimes y \big)_{s,t}:=(x^r\otimes y)_{s,t} - (\widehat{x}^r\otimes y)_{s,t}, \]
that is:
	\begin{itemize}
		\item for $s<t\in[0,r)$,
			\begin{eqnarray} \label{eq:diffdelayx0r}
				\big( (x^r-\widehat{x}^r)\otimes y \big)_{s,t} &=& \int_s^t (y_u-y_t) \,d\eta_{u-r} + \int_s^t (y_t-y_u) b(u,x^r_u) \,du \nonumber \\
				&& + \int_s^t \sigma(\eta_{u-r}) \,d_u(y\otimes y)_{\cdot,t},
			\end{eqnarray}
		\item for $s\in[0,r)$ and $t\in[r,T]$,
			\begin{eqnarray} \label{eq:diffdelayx0rT}
				\big( (x^r-\widehat{x}^r)\otimes y \big)_{s,t} &=& \int_s^t (y_t-y_u) b(u,x^r_u) \,du + \int_s^t \sigma(x^r_{u-r}) \,d_u(y\otimes y)_{\cdot,t} \nonumber \\
				&& - (\eta_{\cdot-r}\otimes y)_{s,r} - \int_r^t (y_t-y_u) b(u-r,\widehat{x}^r_u) \,du \nonumber \\
				&& - \int_r^t \sigma(\widehat{x}^r_{u-r}) \,d_u(y_{\cdot-r}\otimes y)_{\cdot,t} - (\eta_0-\eta_{s-r})\otimes(y_t-y_r), \nonumber
			\end{eqnarray}
		\item for $s<t\in[r,T]$,
			\begin{eqnarray} 
				&&\big( (x^r-\widehat{x}^r)\otimes y \big)_{s,t}
				= \int_s^t (y_{u+r}-y_u) b(u,x^r_u) \,du  \label{eq:diffdelayxrT}r \\
				&& + \int_s^t \sigma(x^r_{u-r}) \,d_u \big( (y-y_{\cdot-r})\otimes y \big)_{\cdot,t} 
				 + \int_s^t \big[ \sigma(x^r_{u-r})-\sigma(\widehat{x}^r_{u-r}) \big] \,d_u(y_{\cdot-r}\otimes y)_{\cdot,t} \nonumber 
			\end{eqnarray}
	\end{itemize}

\smallskip

The following proposition gives us a result about the behavior of $(x^r-\widehat{x}^r)$ when $r$ tends to zero.
	\begin{prop}\label{prop:delay}
		Let $\beta'=\beta-\varepsilon$, where $\varepsilon>0$ is such that $\beta-2\varepsilon>0$ and $\lambda>\frac{1}{\beta-\varepsilon}-2$.  Suppose that $(x,y,x\otimes y)$, $(x^r,y,x^r\otimes y)$, $(\widehat{x}^r,y,\widehat{x}^r\otimes y)$ and $(y,y,y\otimes y)$ belong to $M^\beta_{d,m}(0,T)$. Assume that $\sigma$ and $b$ satisfy {\upshape\bfseries (H1)} and {\upshape\bfseries (H2)} respectively, and both satisfy {\upshape\bfseries (H3)}. Assume also that $\|\eta\|_{\beta(-r_0,0)}<\infty$ and $\sup_{r\leq r_0} \Phi_{\beta(0,r)}(\eta_{\cdot-r},y)<\infty$ and suppose that $\|(y-y_{\cdot-r})\otimes y\|_{2\beta'(r,T)} \rightarrow 0$ and $\|y_{\cdot-r}\otimes (y-y_{\cdot-r})\|_{2\beta'(r,T)} \rightarrow 0$ when $r$ tends to zero. Then
			\begin{eqnarray*}
				\|x^r-\widehat{x}^r\|_{\infty} &\leq& K\rho\Lambda \,r^{\beta'} \\
				\|x^r-\widehat{x}^r\|_{\beta'} &\leq& K\rho\Lambda \,r^\varepsilon \\
				\big\| (x^r-\widehat{x}^r)\otimes y \big\|_{2\beta'} &\leq& KM\rho^3\Lambda^3 \,r^\varepsilon + KM\rho^3\Lambda^2\Lambda_r
			\end{eqnarray*}
		where $K\geq1$, $M\geq1$ are constants depending on $\beta, \beta', r_0, T, \sigma, y$ and
			\begin{eqnarray*}
				\rho &=& \Big( 1 + 3\|b\|_{\infty}T^{1-\beta'} + 3\|\sigma\|_{\infty}(1+T^{\beta'}) + 2\|\sigma'\|_{\infty}(1+T^{\beta'}) + 3\|\sigma'\|_{\infty}T^{\beta'-\varepsilon} \\
				&& \hspace{5mm} + \|\sigma'\|_{\lambda}\Big( 2\sup_{r\leq r_0}\|x^r\|^\lambda_{\beta'} + \|\eta\|^\lambda_{\beta'(-r_0,0)} \Big)T^{(\lambda+1)\beta'-\varepsilon} + \|\sigma''\|_{\infty}T^{\beta'}(1+T^{\beta'}) \\
				&& \hspace{5mm} + 2\|\sigma''\|_{\lambda}\sup_{r\leq r_0}\|\widehat{x}^r\|^\lambda_{\beta'}T^{(\lambda+1)\beta'} \Big) (1+T^\varepsilon),\\
				\Lambda &=& \max\bigg( 1, \|\eta\|_{\beta(-r_0,0)}, \sup_{r\leq r_0}\Phi_{\beta'(0,r)}(\eta_{\cdot-r},y), \Phi_{\beta(0,T)}(y,y), \sup_{r\leq r_0}\Phi_{\beta'(r,T)}(y_{\cdot-r},y), \\
				&& \hspace{15mm} \sup_{r\leq r_0}\Phi_{\beta'(0,r)}(\eta_{\cdot-r},y,y), \sup_{r\leq r_0}\Phi_{\beta'(0,T)}(x^r,y), \sup_{r\leq r_0}\Phi_{\beta'(0,T)}(\widehat{x}^r,y) \bigg) \\
				&& \hspace{5mm} \times \Big( 1+\sup_{r\leq r_0}\|x^r\|_{\beta'(r)} \Big) \Big( 1+\|y\|_{\beta} \Big),\\
				\Lambda_r &=& \max\Big( 1, \sup_{r\leq r_0}\|x^r\|_{\beta'} \Big) \Big( \|(y-y_{\cdot-r})\otimes y\|_{2\beta'(r,T)} + \|y_{\cdot-r}\otimes (y-y_{\cdot-r})\|_{2\beta'(r,T)} \Big).
			\end{eqnarray*}
	\end{prop}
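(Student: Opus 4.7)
The overall strategy is to split each norm $\|x^r-\widehat{x}^r\|_{\infty}$, $\|x^r-\widehat{x}^r\|_{\beta'}$ and $\|(x^r-\widehat{x}^r)\otimes y\|_{2\beta'}$ into three regions according to whether $s<t$ lie in $[0,r)$, in $[0,r)\times[r,T]$, or in $[r,T]$, and to exploit the explicit expressions \eqref{eq:diffdelayst0r}--\eqref{eq:diffdelayxrT}. Since the integrals over intervals of length $\le r$ automatically produce factors $r^{\beta}$ or $r^{\beta'}$, it is natural that the bounds come with a power of $r$. The role of $\Lambda$ is to collect all the a priori bounds supplied by Proposition~\ref{cotesxr} and Remark~\ref{obs_cotes}, while $\rho$ collects the dependence on the coefficients $\sigma,b,\eta$.

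For $\|x^r-\widehat{x}^r\|_{\infty}$, I first observe that on $[0,r)$ the function $\widehat{x}^r_t=\eta_{t-r}$, so the difference reduces to $\eta_0-\eta_{t-r}$ plus two integrals of length $\le r$; the deterministic integral is controlled by $\|b\|_\infty\cdot r$, and the Young-type integral by Proposition~\ref{prop1HN} applied on $[0,r]$, which yields a factor $r^{\beta}\le r^{\beta'}$. On $[r,T]$, $(x^r-\widehat{x}^r)_t$ is simply $\int_{t-r}^t b(u,x^r_u)\,du+\int_{t-r}^t\sigma(x^r_{u-r})\,dy_u$; again Proposition~\ref{prop1HN} on the interval $[t-r,t]$ gives a bound of order $r^{\beta'}$ with constants absorbed into $K\rho\Lambda$. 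For $\|x^r-\widehat{x}^r\|_{\beta'}$ I work with \eqref{eq:diffdelayst0r}, \eqref{eq:diffdelaystrT2} and the mixed-region formula, dividing by $(t-s)^{\beta'}$. The key observation is that $t-s\le r$ in the relevant pieces (or can be reduced to pieces of length $r$ by subtracting a common integral from $0$), so a factor $r^{\beta-\beta'}=r^{\varepsilon}$ is released; Proposition~\ref{prop:normbetab} gives the drift part and Proposition~\ref{prop1HN} the stochastic part, and the resulting constants fit into $K\rho\Lambda$.

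The bound for $\|(x^r-\widehat{x}^r)\otimes y\|_{2\beta'}$ is the main obstacle and is handled region by region using \eqref{eq:diffdelayx0r}--\eqref{eq:diffdelayxrT}. On $[0,r)$ and on the mixed interval I use Proposition~\ref{prop2HN} applied on subintervals of length $\le r$ together with Proposition~\ref{cotesxr} to release an $r^\varepsilon$. The delicate region is $r\le s<t\le T$, where \eqref{eq:diffdelayxrT} produces three terms. The first, $\int_s^t(y_{u+r}-y_u)b(u,x^r_u)\,du$, is bounded by $\|b\|_\infty\|y-y_{\cdot-r}\|_{\infty(r,T)}(t-s)$, and Lemma~\ref{lemma_yyr} turns this into a factor $r^{\beta}\le r^{\varepsilon}$. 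For the second term, $\int_s^t\sigma(x^r_{u-r})\,d_u\bigl((y-y_{\cdot-r})\otimes y\bigr)_{\cdot,t}$, Proposition~\ref{prop2HN} applies with $(y-y_{\cdot-r})$ in place of $y$, and the resulting estimate involves $\|(y-y_{\cdot-r})\otimes y\|_{2\beta'(r,T)}$ and $\|y-y_{\cdot-r}\|_{\beta'}$; this is where the hypothesis $\|(y-y_{\cdot-r})\otimes y\|_{2\beta'(r,T)}\to 0$ enters and produces the $\Lambda_r$ contribution. The third term, $\int_s^t[\sigma(x^r_{u-r})-\sigma(\widehat{x}^r_{u-r})]\,d_u(y_{\cdot-r}\otimes y)_{\cdot,t}$, is estimated with Proposition~\ref{prop:normbetafx} applied to the pair $(x^r_{\cdot-r},\widehat{x}^r_{\cdot-r})$; this introduces on the right-hand side the quantities $\|x^r-\widehat{x}^r\|_{\infty}$, $\|x^r-\widehat{x}^r\|_{\beta'}$ and $\|(x^r-\widehat{x}^r)\otimes y\|_{2\beta'}$ themselves, so one obtains an inequality in which $\|(x^r-\widehat{x}^r)\otimes y\|_{2\beta'(r,T)}$ appears on both sides with a coefficient $(t-s)^{\beta'}$.

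The main technical difficulty is precisely this self-referential inequality at the $2\beta'$-level: to close it I subdivide $[r,T]$ into subintervals of length $\widetilde\Delta$ small enough that the coefficient of $\|(x^r-\widehat{x}^r)\otimes y\|_{2\beta'}$ on the right is, say, $\le 1/2$, then absorb the term and iterate over the at most $T/\widetilde\Delta$ subintervals, paying a factor that is hidden in the constant $M$. The previously obtained bounds on $\|x^r-\widehat{x}^r\|_{\infty}$ and $\|x^r-\widehat{x}^r\|_{\beta'}$ feed into this estimate and account for one factor of $\rho\Lambda\, r^\varepsilon$; the bounds on the a priori norms $\|x^r\|_{\beta'}$, $\|\widehat{x}^r\|_{\beta'}$ from Proposition~\ref{cotesxr} account for a further two factors, yielding the $\rho^3\Lambda^3 r^\varepsilon+\rho^3\Lambda^2\Lambda_r$ structure. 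Collecting the constants arising from Propositions~\ref{prop1HN}--\ref{prop:normbetarfx}, Lemma~\ref{lemma_yyr} and the iteration gives the stated form of $\rho$, $\Lambda$ and $\Lambda_r$.
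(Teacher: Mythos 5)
Your overall route coincides with the paper's: the same splitting into $[0,r)$, the mixed region and $[r,T]$, the same use of Propositions~\ref{prop1HN}, \ref{prop2HN}, \ref{prop:normbetab} and \ref{prop:normbetarfx} together with Lemma~\ref{lemma_yyr} and the a priori bounds of Proposition~\ref{cotesxr}, and the same absorption-plus-iteration to close the self-referential inequality for the tensor norm. There is, however, a genuine gap in your treatment of the middle term $A_2=\sup_{s<t}(t-s)^{-2\beta'}\big|\int_s^t\sigma(x^r_{u-r})\,d_u\big((y-y_{\cdot-r})\otimes y\big)_{\cdot,t}\big|$. Applying Proposition~\ref{prop2HN} with the pair $(y-y_{\cdot-r},y)$ produces not only $\Phi_{\beta'}(y-y_{\cdot-r},y)$, which you correctly control through $\|y-y_{\cdot-r}\|_{\beta'}$ (Lemma~\ref{lemma_yyr}) and the hypothesis $\|(y-y_{\cdot-r})\otimes y\|_{2\beta'(r,T)}\to0$, but also $\Phi_{\beta'}(\widehat{x}^r,y-y_{\cdot-r},y)$, which contains the quantity $\|\widehat{x}^r\otimes(y-y_{\cdot-r})\|_{2\beta'}$. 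This is neither among the a priori bounds of Proposition~\ref{cotesxr} nor a hypothesis of the proposition; the paper estimates it separately by writing its integral representation, $\big(\widehat{x}^r\otimes(y-y_{\cdot-r})\big)_{s,t}=\int_s^t(y_t-y_{t-r}-y_u+y_{u-r})\,b(u-r,\widehat{x}^r_u)\,du+\int_s^t\sigma(\widehat{x}^r_{u-r})\,d_u\big(y_{\cdot-r}\otimes(y-y_{\cdot-r})\big)_{\cdot,t}$, and applying Proposition~\ref{prop2HN} once more --- and this is precisely, and only, where the second hypothesis $\|y_{\cdot-r}\otimes(y-y_{\cdot-r})\|_{2\beta'(r,T)}\to0$ enters. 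Your sketch never invokes that hypothesis, even though it appears inside the $\Lambda_r$ of the conclusion, so as written your bound for $A_2$ does not close.

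A second, smaller elision concerns $A_3$: since the integrand $\sigma(x^r_{u-r})-\sigma(\widehat{x}^r_{u-r})$ involves the shifted paths, Proposition~\ref{prop:normbetarfx} returns the norm $\|(x^r-\widehat{x}^r)\otimes y\|_{2\beta'(a-r,b-r)}$ over the \emph{shifted} interval, not over $(a,b)$ as you assert. Before the smallness-of-coefficient absorption can be performed one must transfer it back via the multiplicative property, $\|\cdot\|_{2\beta'(a-r,b-r)}\leq\|\cdot\|_{2\beta'(a-r,a)}+\|\cdot\|_{2\beta'(a,b)}+\|x^r-\widehat{x}^r\|_{\beta'}\|y\|_{\beta'}$, and control the overhang $\|\cdot\|_{2\beta'(a-r,a)}$ separately; the paper does this by distinguishing $a\in[r,2r)$ from $a\in[2r,T]$ and reusing the bound already established on $(0,r)$ together with the short-interval estimates. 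This repair is routine given the rest of your argument, but without it the ``coefficient $\leq\frac12$'' absorption is applied to the wrong quantity. The remaining components of your proposal --- the supremum bound of order $r^{\beta'}$, the release of $r^{\varepsilon}$ in the $\beta'$-norm via the dichotomy $t-s\leq r$ versus $t-s\geq r$, the elementary bound for $A_1$, and the partition of $[r,T]$ into $M$ subintervals paying the factor $M$ --- match the paper's proof.
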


	\begin{rem}
	Thanks to Proposition \ref{cotesxr}, $\rho$ and $\Lambda$ are finite and, by hypothesis, $\Lambda_r$ converges to zero when $r$ tends to zero. Hence, the proposition states that
		\begin{eqnarray*}
			\|x^r-\widehat{x}^r\|_{\infty} &\xrightarrow{r\downarrow 0}& 0, \\
			\|x^r-\widehat{x}^r\|_{\beta'} &\xrightarrow{r\downarrow 0}& 0, \\
			\big\| (x^r-\widehat{x}^r)\otimes y \big\|_{2\beta'} &\xrightarrow{r\downarrow 0}& 0.
		\end{eqnarray*}
	\end{rem}

\begin{proof}
	We start studying the supremum norm.
	On one hand,  using Proposition \ref{prop1HN}, for $r\leq r_0$, we obtain
		\begin{eqnarray*}
			\|x^r-\widehat{x}^r\|_{\infty(0,r)} &\leq& \|\eta\|_{\beta'(-r,0)} r^{\beta'} + \|b\|_{\infty} r + K \|\sigma\|_{\infty} \|y\|_{\beta'} r^{\beta'} \\
			&& \hspace{5mm} + K \Phi_{\beta'(0,r)}(\eta_{\cdot-r},y) \big( \|\sigma'\|_\infty + \|\sigma'\|_\lambda \|\eta_{\cdot-r}\|^\lambda_{\beta'(0,r)} r^{\lambda\beta'} \big) r^{2\beta'} \\
			&\leq& \Big[ \|\eta\|_{\beta(-r_0,0)} T^{\varepsilon} + \|b\|_{\infty} T^{1-\beta'} + K \|\sigma\|_{\infty} \|y\|_{\beta} T^{\varepsilon} \\
			&& \hspace{5mm} + K \Phi_{\beta'(0,r)}(\eta_{\cdot-r},y) \big( \|\sigma'\|_\infty + \|\sigma'\|_\lambda \|\eta\|^\lambda_{\beta'(-r_0,0)} T^{\lambda\beta'} \big) T^{\beta'} \Big] r^{\beta'}
		\end{eqnarray*}
	where we used that $\|\eta\|_{\beta'(-r,0)} \leq \|\eta\|_{\beta(-r_0,0)}T^\varepsilon$ and $\|y\|_{\beta'} \leq \|y\|_{\beta}T^\varepsilon$. \\
	On the other hand,  using Proposition \ref{prop1HN} we obtain
		\begin{eqnarray*}
			\|x^r-\widehat{x}^r\|_{\infty(r,T)} &\leq& \|b\|_{\infty} r + K \|\sigma\|_{\infty} \|y\|_{\beta'} r^{\beta'} \\
			&& \hspace{5mm} + K \Phi_{\beta'(0,T)}(\widehat{x}^r,y) \big( \|\sigma'\|_\infty + \|\sigma'\|_\lambda \|\widehat{x}^r\|^\lambda_{\beta'(r,T)} \,r^{\lambda\beta'} \big) r^{2\beta'} \\
			&\leq& \Big[ \|b\|_{\infty} T^{1-\beta'} + K \|\sigma\|_{\infty} \|y\|_{\beta} T^{\varepsilon} \\
			&& \hspace{5mm} + K \Phi_{\beta'(0,T)}(\widehat{x}^r,y) \big( \|\sigma'\|_\infty + \|\sigma'\|_\lambda \|x^r\|^\lambda_{\beta'} \,T^{\lambda\beta'} \big) T^{\beta'} \Big] r^{\beta'}.
		\end{eqnarray*}
	Hence, we have that
		\begin{equation} \label{cotaH^1}
			\|x^r-\widehat{x}^r\|_\infty \leq K\rho\Lambda \,r^{\beta'}.
		\end{equation}

	Now we study the H\"older norms. Following the proof of Lemma \ref{lemma_cotesxr} we easily obtain that
		\begin{eqnarray} \label{cota_mult}
			\|x^r-\widehat{x}^r\|_{\beta'} \leq \|x^r-\widehat{x}^r\|_{\beta'(0,r)} + \|x^r-\widehat{x}^r\|_{\beta'(r,T)}
		\end{eqnarray}
	and
		\begin{eqnarray} \label{cota_multx}
			\|(x^r-\widehat{x}^r)\otimes y\|_{2\beta'} &\leq& \|(x^r-\widehat{x}^r)\otimes y\|_{2\beta'(0,r)} + \|(x^r-\widehat{x}^r)\otimes y\|_{2\beta'(r,T)} \nonumber \\
			&& \hspace{10mm} + \|x^r-\widehat{x}^r\|_{\beta'(0,r)}\|y\|_{\beta'}.
		\end{eqnarray}
	So we can study the H\"older norms independently in the intervals $[0,r)$ and $[r,T]$.
	We study the H\"older norm of $(x^r-\widehat{x}^r)$ .By \eqref{eq:diffdelayst0r} and Proposition \ref{prop1HN} we have
		\begin{eqnarray} \label{cota:1}
			\|x^r-\widehat{x}^r\|_{\beta'(0,r)} &\leq& \|\eta\|_{\beta'(-r,0)} + \|b\|_\infty r^{1-\beta'} + K \|\sigma\|_\infty \|y\|_{\beta'(0,r)} \nonumber\\
			&& \hspace{5mm} + K \Phi_{\beta'(0,r)}(\eta_{\cdot-r},y) \big( \|\sigma'\|_\infty + \|\sigma'\|_\lambda \|\eta_{\cdot-r}\|^\lambda_{\beta'(0,r)} r^{\lambda\beta'} \big) r^{\beta'} \nonumber\\
			&\leq& \Big[ \|\eta\|_{\beta(-r_0,0)} + \|b\|_\infty T^{1-\beta} + K \|\sigma\|_\infty \|y\|_\beta \nonumber\\
			&& \hspace{5mm} + K \Phi_{\beta'(0,r)}(\eta_{\cdot-r},y) \big( \|\sigma'\|_\infty + \|\sigma'\|_\lambda \|\eta\|^\lambda_{\beta'(-r_0,0)} T^{\lambda\beta'} \big) T^{\beta'-\varepsilon} \Big] r^{\varepsilon}.\nonumber\\
		\end{eqnarray}
	In the interval $[r,T]$, observe that
		\begin{eqnarray} \label{cota:2}
			&& \|x^r-\widehat{x}^r\|_{\beta'(r,T)} \nonumber \\
			&& \hspace{5mm} \leq \max\Bigg( \sup_{\substack{s<t\in[r,T] \\ t-s\leq r}} \frac{|(x^r-\widehat{x}^r)_t-(x^r-\widehat{x}^r)_s|}{(t-s)^{\beta'}}, \,\sup_{\substack{s<t\in[r,T] \\ t-s\geq r}} \frac{|(x^r-\widehat{x}^r)_t-(x^r-\widehat{x}^r)_s|}{(t-s)^{\beta'}} \Bigg). \nonumber\\
		\end{eqnarray}
	On one hand, by definition \eqref{eq:diffdelaystrT2} and Proposition \ref{prop1HN} we have
		\begin{eqnarray}\label{cota:3}
			&& \sup_{\substack{s<t\in[r,T] \\ t-s\leq r}} \frac{|(x^r-\widehat{x}^r)_t-(x^r-\widehat{x}^r)_s|}{(t-s)^{\beta'}} \nonumber\\
			&& \hspace{18mm} \leq 2 \|b\|_\infty r^{1-\beta'} + 2K \|\sigma\|_\infty \|y\|_{\beta} \,r^\varepsilon \nonumber\\
			&& \hspace{22mm} + 2K \Phi_{\beta'(r,T)}(\widehat{x}^r,y) \big( \|\sigma'\|_\infty + \|\sigma'\|_\lambda \|\widehat{x}^r\|^\lambda_{\beta'(r,T)} r^{\lambda\beta'} \big) r^{\beta'} \nonumber\\
			&& \hspace{18mm} \leq \Big[ 2 \|b\|_\infty T^{1-\beta} + 2K \|\sigma\|_\infty \|y\|_{\beta} \nonumber\\
			&& \hspace{22mm} + 2K \Phi_{\beta'(0,T)}(\widehat{x}^r,y) \big( \|\sigma'\|_\infty + \|\sigma'\|_\lambda \|x^r\|^\lambda_{\beta'} T^{\lambda\beta'} \big) T^{\beta'-\varepsilon} \Big] r^{\varepsilon},
		\end{eqnarray}
	where we used that $\displaystyle \sup_{\substack{s<t\in[r,T] \\ t-s\leq r}} \|y\|_{\beta'(s,t)} \leq \|y\|_{\beta} \,r^\varepsilon$. \\
	On the other hand, with a similar computation, by definition \eqref{eq:diffdelaystrT1} and Proposition \ref{prop1HN} we have
		\begin{eqnarray} \label{cota:4}
			&& \sup_{\substack{s<t\in[r,T] \\ t-s\geq r}} \frac{|(x^r-\widehat{x}^r)_t-(x^r-\widehat{x}^r)_s|}{(t-s)^{\beta'}}\nonumber \\
			&& \hspace{18mm} \leq 2 \|b\|_\infty r^{1-\beta'} + 2K \|\sigma\|_\infty \|y\|_{\beta} T^\varepsilon r^\varepsilon \nonumber\\
			&& \hspace{22mm} + 2K \Phi_{\beta'(r,T)}(\widehat{x}^r,y) \big( \|\sigma'\|_\infty + \|\sigma'\|_\lambda \|\widehat{x}^r\|^\lambda_{\beta'(r,T)} r^{\lambda\beta'} \big) r^{\beta'} \nonumber\\
			&& \hspace{18mm} \leq \Big[ 2 \|b\|_\infty T^{1-\beta} + 2K \|\sigma\|_\infty \|y\|_{\beta} T^\varepsilon \nonumber\\
			&& \hspace{22mm} + 2K \Phi_{\beta'(0,T)}(\widehat{x}^r,y) \big( \|\sigma'\|_\infty + \|\sigma'\|_\lambda \|x^r\|^\lambda_{\beta'} \,T^{\lambda\beta'} \big) T^{\beta'-\varepsilon} \Big] r^\varepsilon,
		\end{eqnarray}
	where we used that $\displaystyle \sup_{t\in[r,T]} \|y\|_{\beta'(t-r,t)} \leq \|y\|_{\beta} \,r^\varepsilon$. \\
	Then, by inequality \eqref{cota_mult}  and using \eqref{cota:1}, \eqref{cota:2}, \eqref{cota:3} and \eqref{cota:4} it follows that
		\begin{equation} \label{cotaH^2}
			\|x^r-\widehat{x}^r\|_{\beta'} \leq K\rho\Lambda \,r^\varepsilon.
		\end{equation}
	
	Finally, we study the H\"older norm $\|(x^r-\widehat{x}^r)\otimes y\|_{2\beta'}$. By definition \eqref{eq:diffdelayx0r} and Proposition \ref{prop2HN} we have
		\begin{eqnarray} \label{cota0r}
			&& \|(x^r-\widehat{x}^r)\otimes y\|_{2\beta'(0,r)} \nonumber \\
			&& \hspace{5mm} \leq \|\eta_{\cdot-r}\|_{\beta'(0,r)} \|y\|_{\beta'(0,r)} + \|y\|_{\beta'(0,r)} \|b\|_{\infty} r^{1-\beta'} + K \|\sigma\|_{\infty} \Phi_{\beta'(0,r)}(y,y) \nonumber \\
			&& \hspace{15mm} + K (\|\sigma'\|_{\infty} + \|\sigma'\|_{\lambda} \|\eta_{\cdot-r}\|_{\beta'(0,r)}^\lambda r^{\lambda\beta'}) \Phi_{\beta'(0,r)}(\eta_{\cdot-r},y,y) \,r^{\beta'} \nonumber \\
			&& \hspace{5mm} \leq \Big[ \|\eta\|_{\beta(-r_0,0)} \|y\|_{\beta} T^\varepsilon + \|y\|_{\beta} \|b\|_{\infty} T^{1-\beta'} + K \|\sigma\|_{\infty} \Phi_{\beta(0,T)}(y,y) T^\varepsilon \nonumber \\
			&& \hspace{15mm} + K (\|\sigma'\|_{\infty} + \|\sigma'\|_{\lambda} \|\eta\|_{\beta'(-r_0,0)}^\lambda T^{\lambda\beta'}) \Phi_{\beta'(0,r)}(\eta_{\cdot-r},y,y) \,T^{\beta'-\varepsilon} \Big] r^{\varepsilon} \nonumber \\
			&& \hspace{5mm} \leq K\rho\Lambda r^\varepsilon
		\end{eqnarray}
	where we used that $\Phi_{\beta'(0,r)}(y,y) \leq \Phi_{\beta(0,T)}(y,y)r^{2\varepsilon}$. \\
	Now we study the H\"older norm in the interval $[r,T]$. Let $a<b\in[r,T]$. By  \eqref{eq:diffdelayxrT}
		\begin{eqnarray} \label{A_1+A_2+A_3}
			\|(x^r-\widehat{x}^r)\otimes y\|_{2\beta'(a,b)} &\leq& \sup_{s<t\in[a,b]} \frac{\big| \int_s^t (y_{u+r}-y_u) b(u,x^r_u) \,du \big|}{(t-s)^{2\beta'}} \nonumber \\
			&& + \sup_{s<t\in[a,b]} \frac{\big| \int_s^t \sigma(x^r_{u-r}) \,d_u \big( (y-y_{\cdot-r})\otimes y \big)_{\cdot,t} \big|}{(t-s)^{2\beta'}} \nonumber \\
			&& + \sup_{s<t\in[a,b]} \frac{\big| \int_s^t \big[ \sigma(x^r_{u-r})-\sigma(\widehat{x}^r_{u-r}) \big] \,d_u(y_{\cdot-r}\otimes y)_{\cdot,t} \big|}{(t-s)^{2\beta'}} \nonumber \\
			&=& A_1 + A_2 + A_3.
		\end{eqnarray}
	It is easy to see that    
		\begin{eqnarray} \label{A_1}
			A_1 \leq \|y\|_{\beta} \,\|b\|_{\infty} T^{1-\beta'} \leq K\rho\Lambda r^{\varepsilon}.
		\end{eqnarray}
	By Proposition \ref{prop2HN} we have
		\begin{eqnarray} \label{cota:A2}
			A_2 \hspace{-2mm} &\leq& \hspace{-2mm} K \|\sigma\|_{\infty} \,\Phi_{\beta'(a,b)}(y-y_{\cdot-r},y) \nonumber \\
			&& \hspace{-2mm}+ \,K \,\left( \|\sigma'\|_\infty + \|\sigma'\|_\lambda \|\widehat{x}^r\|^\lambda_{\beta'(a,b)} \,T^{\lambda\beta'} \right) \Phi_{\beta'(a,b)}(\widehat{x}^r,y-y_{\cdot-r},y) \,T^{\beta'} \nonumber\\
			&=&\hspace{-2mm} K \|y\|_{\beta'}\left( \|\sigma\|_{\infty} + \left( \|\sigma'\|_\infty + \|\sigma'\|_\lambda \|\widehat{x}^r\|^\lambda_{\beta'} \,T^{\lambda\beta'} \right) \|\widehat{x}^r\|_{\beta'} T^{\beta'}\right)\|y-y_{\cdot-r}\|_{\beta'(r,T)} \nonumber \\
			&&  \hspace{-2mm}+ K \left( \|\sigma\|_{\infty} + \left( \|\sigma'\|_\infty + \|\sigma'\|_\lambda \|\widehat{x}^r\|^\lambda_{\beta'} \,T^{\lambda\beta'} \right) \|\widehat{x}^r\|_{\beta'} T^{\beta'}\right) \|(y-y_{\cdot-r})\otimes y\|_{2\beta'(r,T)} \nonumber\\
			&& \hspace{-2mm} + \,K \|y\|_{\beta'} T^{\beta'}\left( \|\sigma'\|_\infty + \|\sigma'\|_\lambda \|\widehat{x}^r\|^\lambda_{\beta'} \,T^{\lambda\beta'} \right) \|\widehat{x}^r\otimes (y-y_{\cdot-r})\|_{2\beta'(a,b)}.
		\end{eqnarray}
	Now we will estimate the norm $\|\widehat{x}^r\otimes (y-y_{\cdot-r})\|_{2\beta'(a,b)}$. For $s<t\in[a,b]$,
		\begin{eqnarray*}
			\big( \widehat{x}^r\otimes (y-y_{\cdot-r}) \big)_{s,t}
			&=& \int_s^t (y_t-y_{t-r}-y_u+y_{u-r}) b(u-r,\widehat{x}^r_u) \,du \\
			&& + \int_s^t \sigma(\widehat{x}^r_{u-r}) \,d_u \big( y_{\cdot-r}\otimes (y-y_{\cdot-r}) \big)_{\cdot,t}.
		\end{eqnarray*}
	So by Proposition \ref{prop2HN} and Lemma \ref{lemma_yyr} we have
		\begin{eqnarray} \label{ineq_y2}
			&& \|\widehat{x}^r\otimes(y-y_{\cdot-r})\|_{2\beta'(a,b)} \nonumber \\
			&& \hspace{10mm} \le K \left[ \big( \|\sigma'\|_\infty + \|\sigma'\|_\lambda \|\widehat{x}^r\|^\lambda_{\beta'} \,T^{\lambda\beta'} \big) \big( \|\widehat{x}^r\|_{\beta'} \|y\|_{\beta'} + \|\widehat{x}^r\otimes y\|_{2\beta'} \big) T^{\beta'} \right. \nonumber \\
			&& \hspace{25mm} \left.+ \|b\|_\infty T^{1-\beta'}+\|\sigma\|_\infty \|y\|_{\beta'}\right] \|y\|_{\beta} \,r^\varepsilon \nonumber\\
			&& \hspace{15mm} + K \left[ \|\sigma\|_\infty + \big( \|\sigma'\|_\infty\hspace{-1mm} + \|\sigma'\|_\lambda \|\widehat{x}^r\|^\lambda_{\beta'} \,T^{\lambda\beta'} \big) \|\widehat{x}^r\|_{\beta'}T^{\beta'}\right] \nonumber \\
			&& \hspace{25mm} \times \|y_{\cdot-r}\otimes (y-y_{\cdot-r})\|_{2\beta'(r,T)}. \nonumber 
		\end{eqnarray}
	Putting together \eqref{cota:A2} and \eqref{ineq_y2} and inequality \eqref{yyr_beta'} we get
		\begin{eqnarray} \label{A_2}
			A_2 &\leq& K\rho\Lambda \,r^\varepsilon + K\rho^2\Lambda^2 \,r^\varepsilon + K\rho^2\Lambda\Lambda_r + K\rho\Lambda_r \nonumber \\
			&\leq& K\rho^2\Lambda^2 \,r^\varepsilon + K\rho^2\Lambda\Lambda_r,
		\end{eqnarray}
	where we used that $1\leq\rho\leq\rho^2$ and $1\leq\Lambda\leq\Lambda^2$.
	
	Finally, by Proposition \ref{prop:normbetarfx} and inequalities \eqref{cotaH^1} and \eqref{cotaH^2} we have
		\begin{eqnarray} \label{eqC^5}
			A_3 &\leq&
 G^4_{\beta'(r,T)}(\sigma,\widehat{x}^r,\widehat{x}^r_{\cdot-r},y_{\cdot-r},y) \,T^{\beta'} \|x^r-\widehat{x}^r\|_{\infty} \nonumber \\
			&& + G^5_{\beta'(r,T)}(\sigma,\widehat{x}^r,\widehat{x}^r_{\cdot-r},y_{\cdot-r},y) \,T^{\beta'} \|x^r-\widehat{x}^r\|_{\beta'} \nonumber \\
			&& + G^6_{\beta'(r,T)}(\sigma,\widehat{x}^r_{\cdot-r},y) (b-a)^{\beta'} \|(x^r-\widehat{x}^r)\otimes y\|_{2\beta'(a-r,b-r)} \nonumber \\
			&\leq& K\rho^2\Lambda^2 \,r^\varepsilon + G^6_{\beta'(r,T)}(\sigma,\widehat{x}^r_{\cdot-r},y) (b-a)^{\beta'} \|(x^r-\widehat{x}^r)\otimes y\|_{2\beta'(a-r,b-r)} \nonumber 
		\end{eqnarray}
	where we used that $G^i_{\beta'(r,T)}(\sigma,\widehat{x}^r,\widehat{x}^r_{\cdot-r},y_{\cdot-r},y) \,T^{\beta'} \leq K\rho\Lambda$ for $i=4,5$.

	\smallskip
	Applying the multiplicative property, it is easy to see that
		\begin{eqnarray*}
			&& \|(x^r-\widehat{x}^r)\otimes y\|_{2\beta'(a-r,b-r)} \\
			&& \hspace{10mm} \leq \|(x^r-\widehat{x}^r)\otimes y\|_{2\beta'(a-r,a)} + \|(x^r-\widehat{x}^r)\otimes y\|_{2\beta'(a,b)} + \|x^r-\widehat{x}^r\|_{\beta'} \|y\|_{\beta'}.
		\end{eqnarray*}
	On one hand, by \eqref{cotaH^2}
		\begin{eqnarray*}
			\|x^r-\widehat{x}^r\|_{\beta'} \|y\|_{\beta'} \leq \|x^r-\widehat{x}^r\|_{\beta'} \|y\|_{\beta} T^\varepsilon \leq K\rho^2\Lambda^2 \,r^\varepsilon.
		\end{eqnarray*}
	On the other hand, we have that
		\begin{eqnarray*}
			\|(x^r-\widehat{x}^r)\otimes y\|_{2\beta'(a-r,a)} \leq K\rho^2\Lambda^2 \,r^\varepsilon + K\rho^2\Lambda\Lambda_r,
		\end{eqnarray*}
	where the result is obtained considering separately the two cases $a\in[r,2r)$ and $a\in[2r,T]$ and applying multiplicative property, inequalities \eqref{cota0r}, \eqref{A_1+A_2+A_3}, \eqref{A_1}, \eqref{A_2}, \eqref{eqC^5} and $G^6_{\beta'(r,T)}(\sigma,\widehat{x}^r_{\cdot-r},y) \,T^{\beta'} \leq K\rho\Lambda$.
	Therefore,
		\begin{eqnarray*}
			\|(x^r-\widehat{x}^r)\otimes y\|_{2\beta'(a-r,b-r)} \leq K\rho^2\Lambda^2 \,r^\varepsilon + K\rho^2\Lambda\Lambda_r + \|(x^r-\widehat{x}^r)\otimes y\|_{2\beta'(a,b)}.
		\end{eqnarray*}
	Then it follows that
		\begin{eqnarray} 
			A_3 \leq G^6_{\beta'(r,T)}(\sigma,\widehat{x}^r_{\cdot-r},y) (b-a)^{\beta'} \|(x^r-\widehat{x}^r)\otimes y\|_{2\beta'(a,b)} + K\rho^3\Lambda^3 \,r^\varepsilon + K\rho^3\Lambda^2\Lambda_r, \label{A_3}
		\end{eqnarray}
	where we used again that $G^6_{\beta'(r,T)}(\sigma,\widehat{x}^r_{\cdot-r},y) \,T^{\beta'} \leq K\rho\Lambda$.
	From inequalities \eqref{A_1+A_2+A_3}, \eqref{A_1}, \eqref{A_2} and \eqref{A_3} we have that
		\begin{eqnarray*}
			\|(x^r-\widehat{x}^r)\otimes y\|_{2\beta'(a,b)} &\leq& G^6_{\beta'(r,T)}(\sigma,\widehat{x}^r_{\cdot-r},y) (b-a)^{\beta'} \|(x^r-\widehat{x}^r)\otimes y\|_{2\beta'(a,b)} \\
			&& \hspace{10mm} + K\rho^3\Lambda^3 \,r^\varepsilon + K\rho^3\Lambda^2\Lambda_r,
		\end{eqnarray*}
	where we used that $\rho^n\leq\rho^{n+1}$ and $\Lambda^n\leq\Lambda^{n+1}$ for any $n\in\N$. \\
	Set now
		\begin{equation}
			\widetilde{\Delta} := \Big( 2 \,\sup_{r\leq r_0} G^6_{\beta'(r,T)}(\sigma,\widehat{x}^r_{\cdot-r},y) \Big)^{-\frac{1}{\beta'}}.\nonumber
		\end{equation}
	Observe that, if $a,b$ are such that $(b-a)\leq\widetilde{\Delta}$, then
		\begin{eqnarray} \label{cota10}
			\|(x^r-\widehat{x}^r)\otimes y\|_{2\beta'(a,b)} \leq K\rho^3\Lambda^3 \,r^\varepsilon + K\rho^3\Lambda^2\Lambda_r.
		\end{eqnarray}
	
	Now consider a partition $r=t_0<\dots<t_M=T$ such that $(t_{i+1}-t_i)\leq\widetilde{\Delta}$ for $i=0\dots,M-1$. Then, using the multiplicative property iteratively, we have
		\begin{eqnarray*}
			\|(x^r-\widehat{x}^r)\otimes y\|_{2\beta'(r,T)} &\leq& \sum_{i=0}^{M-1} \|(x^r-\widehat{x}^r)\otimes y\|_{2\beta'(t_i,t_{i+1})} + (M-1) \|x^r-\widehat{x}^r\|_{\beta'} \|y\|_{\beta'}.
		\end{eqnarray*}
	Applying \eqref{cotaH^2} and \eqref{cota10}, we obtain
		\begin{eqnarray} \label{cotarT}
			\|(x^r-\widehat{x}^r)\otimes y\|_{2\beta'(r,T)} &\leq& KM\rho^3\Lambda^3 \,r^\varepsilon + KM\rho^3\Lambda^2\Lambda_r + K(M-1)\rho^2\Lambda^2 \,r^\varepsilon \nonumber \\
			&\leq& KM\rho^3\Lambda^3 \,r^\varepsilon + KM\rho^3\Lambda^2\Lambda_r.
		\end{eqnarray}
	
	Finally, putting together \eqref{cota_multx}, \eqref{cotaH^2}, \eqref{cota0r} and \eqref{cotarT} we have that
		\begin{eqnarray} 
			\|(x^r-\widehat{x}^r)\otimes y\|_{2\beta'} \leq KM\rho^3\Lambda^3 \,r^\varepsilon + KM\rho^3\Lambda^2\Lambda_r.\nonumber
		\end{eqnarray}
	So the proof is complete.
\end{proof}

\vspace{5mm}

The following definitions will be useful in the next results:
	\begin{eqnarray*}
		\overline{G}^i_{\beta'} &:=& \sup_{r\leq r_0} G^i_{\beta'(0,T)}(\sigma,x,x^r,y) \qquad i=1,2 \label{supG12} \\
		\overline{G}^3_{\beta'} &:=& \sup_{r\leq r_0} G^3_{\beta'(0,T)}(\sigma,x^r) \label{supG3} \\
		\overline{G}^j_{\beta'} &:=& \sup_{r\leq r_0} G^j_{\beta'(0,T)}(\sigma,x,x^r,y,y) \qquad j=4,5 \label{supG45} \\
		\overline{G}^6_{\beta'} &:=& \sup_{r\leq r_0} G^6_{\beta'(0,T)}(\sigma,x^r,y). \label{supG6}
	\end{eqnarray*}

The following result gives as a bound for $\big\| (x-x^r)\otimes y \big\|_{2\beta'(a,b)}$ when the interval $(a,b)$ is sufficiently small.
Define $\Delta_{\beta'}^1$ as follows:
	\begin{equation*}
		\Delta_{\beta'}^1 = \big( 2 \overline{G}^6_{\beta'} \big)^{-\frac{1}{\beta'}}.
	\end{equation*}

\hspace{5mm}

We state the following proposition:
	\begin{prop}\label{prop:norm2betaotimes}
	Suppose that $(x,y,x\otimes y)$, $(x^r,y,x^r\otimes y)$ and $(\widehat{x}^r,y,\widehat{x}^r\otimes y)$ belong to $M^\beta_{d,m}(0,T)$, $(y,y,y\otimes y)$ belongs to $M^\beta_{d,m}(0,T)$ and $(y_{\cdot-r},y,y_{\cdot-r}\otimes y)$ belongs to $M^\beta_{d,m}(r,T)$. Assume that $\sigma$ and $b$ satisfy {\upshape\bfseries (H1)} and {\upshape\bfseries (H2)} respectively. Then, for all $0\leq a<b\leq T$ such that $(b-a)\leq\Delta_{\beta'}^1$,
		\begin{eqnarray*}
			&& \big\| (x-x^r)\otimes y \big\|_{2\beta'(a,b)} \\
			&& \hspace{10mm} \leq 2 \big[ L_N \|y\|_{\beta'} (b-a)^{1-2\beta'} + G^4_{\beta'(a,b)}(\sigma,x,x^r,y,y) \big] (b-a)^{\beta'} \,\|x-x^r\|_{\infty(a,b)} \\
			&& \hspace{15mm} + \,2 \,G^5_{\beta'(a,b)}(\sigma,x,x^r,y,y) \,(b-a)^{\beta'} \,\|x-x^r\|_{\beta'(a,b)} \\
			&& \hspace{15mm} + \,2 K\rho\Lambda \,G^4_{\beta'(a,b)}(\sigma,x^r,\widehat{x}^r,y,y) (b-a)^{\beta'} r^{\beta'} \\
			&& \hspace{15mm} + \,2 K\rho\Lambda \left[\,G^5_{\beta'(a,b)}(\sigma,x^r,\widehat{x}^r,y,y)+M\rho^2\Lambda^2G^6_{\beta'(a,b)}(\sigma,\widehat{x}^r,y)\right] (b-a)^{\beta'} r^{\varepsilon} \\
			&& \hspace{15mm} + \,2 KM\rho^3\Lambda^2 \,G^6_{\beta'(a,b)}(\sigma,\widehat{x}^r,y) \,(b-a)^{\beta'}\Lambda_r.
		\end{eqnarray*}
	\end{prop}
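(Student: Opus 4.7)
The plan is to start from the decomposition (def:differenceotimes) of $\big((x-x^r)\otimes y\big)_{s,t}$ into three integrals,
\begin{eqnarray*}
I_1(s,t) &=& \int_s^t (y_t-y_u)\big[b(u,x_u)-b(u,x^r_u)\big]\,du, \\
I_2(s,t) &=& \int_s^t \big[\sigma(x_u)-\sigma(x^r_u)\big]\,d_u(y\otimes y)_{\cdot,t}, \\
I_3(s,t) &=& \int_s^t \big[\sigma(x^r_u)-\sigma(x^r_{u-r})\big]\,d_u(y\otimes y)_{\cdot,t} = \int_s^t \big[\sigma(x^r_u)-\sigma(\widehat{x}^r_u)\big]\,d_u(y\otimes y)_{\cdot,t},
\end{eqnarray*}
and to estimate each one separately on intervals $[s,t]\subset[a,b]$, divide by $(t-s)^{2\beta'}$, and take the supremum. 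The key device, and the reason for the hypothesis $(b-a)\le\Delta_{\beta'}^1$, will be an absorption argument for the term in $I_2$ that reproduces $\|(x-x^r)\otimes y\|_{2\beta'(a,b)}$ on the right-hand side.

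For $I_1$, the Lipschitz condition in \textbf{(H2)} (applied inside the uniform bound $N$ on $\|x\|_{\infty}$ and $\|x^r\|_{\infty}$ coming from \textbf{(H3)} and Proposition \ref{cotesxr}) together with $|y_t-y_u|\le\|y\|_{\beta'}(t-s)^{\beta'}$ yields
\[ |I_1(s,t)| \le L_N \|y\|_{\beta'}\,(t-s)^{1+\beta'}\,\|x-x^r\|_{\infty(a,b)}, \]
which after dividing by $(t-s)^{2\beta'}$ and bounding $(t-s)\le(b-a)$ gives the term $L_N\|y\|_{\beta'}(b-a)^{1-2\beta'}(b-a)^{\beta'}\|x-x^r\|_{\infty(a,b)}$ appearing in the statement. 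For $I_2$, I will apply Proposition \ref{prop:normbetafx} with $\widetilde{x}=x^r$ and $z=y$; this yields three contributions controlled by $G^4_{\beta'(a,b)}(\sigma,x,x^r,y,y)$, $G^5_{\beta'(a,b)}(\sigma,x,x^r,y,y)$ and $G^6_{\beta'(a,b)}(\sigma,x^r,y)$, times $\|x-x^r\|_{\infty(a,b)}$, $\|x-x^r\|_{\beta'(a,b)}$, and $\|(x-x^r)\otimes y\|_{2\beta'(a,b)}$ respectively, each carrying a factor $(t-s)^{3\beta'}$. After dividing by $(t-s)^{2\beta'}$ the three contributions acquire the factor $(b-a)^{\beta'}$.

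For $I_3$, the same Proposition \ref{prop:normbetafx} is applied, but now with $(x,\widetilde{x})$ replaced by $(x^r,\widehat{x}^r)$. This produces analogous bounds in terms of $\|x^r-\widehat{x}^r\|_{\infty(a,b)}$, $\|x^r-\widehat{x}^r\|_{\beta'(a,b)}$, and $\|(x^r-\widehat{x}^r)\otimes y\|_{2\beta'(a,b)}$, weighted by $G^4_{\beta'(a,b)}(\sigma,x^r,\widehat{x}^r,y,y)$, $G^5_{\beta'(a,b)}(\sigma,x^r,\widehat{x}^r,y,y)$, and $G^6_{\beta'(a,b)}(\sigma,\widehat{x}^r,y)$. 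I then substitute the three estimates from Proposition \ref{prop:delay}, namely $\|x^r-\widehat{x}^r\|_\infty\le K\rho\Lambda r^{\beta'}$, $\|x^r-\widehat{x}^r\|_{\beta'}\le K\rho\Lambda r^\varepsilon$, and $\|(x^r-\widehat{x}^r)\otimes y\|_{2\beta'}\le KM\rho^3\Lambda^3 r^\varepsilon+KM\rho^3\Lambda^2\Lambda_r$, to get the $r^{\beta'}$-, $r^{\varepsilon}$- and $\Lambda_r$-terms in the statement.

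The final step is the absorption argument. Summing the bounds for $I_1$, $I_2$ and $I_3$, the only term on the right-hand side involving the quantity we wish to control is $G^6_{\beta'(a,b)}(\sigma,x^r,y)(b-a)^{\beta'}\|(x-x^r)\otimes y\|_{2\beta'(a,b)}$ coming from $I_2$. By definition of $\overline{G}^6_{\beta'}$ and $\Delta_{\beta'}^1=(2\overline{G}^6_{\beta'})^{-1/\beta'}$, the assumption $(b-a)\le\Delta_{\beta'}^1$ gives $G^6_{\beta'(a,b)}(\sigma,x^r,y)(b-a)^{\beta'}\le 1/2$; subtracting this term to the left and multiplying by $2$ produces the factor $2$ in front of every term in the stated inequality. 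The main technical obstacle is purely bookkeeping: keeping the various $G^i$-coefficients, the $r$-powers inherited from Proposition \ref{prop:delay}, and the $\Lambda_r$-contribution aligned with the exact form of the statement; no new analytic idea beyond combining Propositions \ref{prop:normbetafx} and \ref{prop:delay} with the Hölder bound on $y$ is required.
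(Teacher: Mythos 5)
Your proof is correct and follows essentially the same route as the paper: decomposing $\big((x-x^r)\otimes y\big)_{s,t}$ via \eqref{def:differenceotimes}, estimating the three pieces with Propositions \ref{prop:normbetafx}/\ref{prop:normbetarfx} (your use of Proposition \ref{prop:normbetafx} with $(x^r,\widehat{x}^r)$ is identical to the paper's citation of Proposition \ref{prop:normbetarfx}, which is just that special case), substituting the bounds of Proposition \ref{prop:delay}, and absorbing the $G^6_{\beta'(a,b)}(\sigma,x^r,y)(b-a)^{\beta'}\|(x-x^r)\otimes y\|_{2\beta'(a,b)}$ term using $(b-a)\leq\Delta_{\beta'}^1$, exactly as the paper does. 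Your direct estimate of the drift term $I_1$ is in fact slightly cleaner than the paper's appeal to Proposition \ref{prop:normbetab}, since it makes the $(y_t-y_u)$ weight explicit, but it is not a different argument.
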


\begin{proof}
	The proposition is proved applying first Proposition \ref{prop:normbetab}, Proposition \ref{prop:normbetafx} and Proposition \ref{prop:normbetarfx} to definition \eqref{def:differenceotimes} and then Proposition \ref{prop:delay}, and observing that for $a<b$ such that $(b-a)\leq\Delta_{\beta'}^1$
		\[ G^6_{\beta'(a,b)}(\sigma,x^r,y) \,(b-a)^{\beta'} \leq \frac12. \]
\end{proof}

\section{Proof of the main theorem} \label{dSDE_Proof}

{\it Proof of Theorem  \ref{thm_determ}}:
We start studying $\lim_{r\rightarrow0} \|x-x^r\|_\infty$.
As in Lemma \ref{lemma_cotesxr}, we can study separately the intervals $[0,r)$ and $(r,T)$.
	
First we study the norm in the interval $[0,r)$. We apply Proposition \ref{prop:normbetab} and Proposition \ref{prop:normbetaf} to  \eqref{def:difference2} and we obtain
	\begin{eqnarray*}
		\|x-x^r\|_{\beta(0,r)} &\leq& L_N \,r^{1-\beta} \|x-x^r\|_{\infty(0,r)} + G^1_{\beta(0,r)}(\sigma,x,\eta_{\cdot-r},y) \,r^{\beta} \|x-\eta_{\cdot-r}\|_{\infty(0,r)} \\
		&& \hspace{5mm} + \,G^2_{\beta(0,r)}(\sigma,x,\eta_{\cdot-r},y) \,r^{\beta} \|x-\eta_{\cdot-r}\|_{\beta(0,r)} \\
		&& \hspace{5mm} + \,G^3_{\beta(0,r)}(\sigma,\eta_{\cdot-r}) \,r^{\beta} \|(x-\eta_{\cdot-r})\otimes y\|_{2\beta(0,r)}.
	\end{eqnarray*}
Using that the supremum norm of $x$ is bounded and the bound does not depend on $r$, we  see that $\sup_{r\leq r_0} G^i_{\beta(0,r)}(\sigma,x,\eta_{\cdot-r},y)<\infty$  $i=1,2$ and $\sup_{r\leq r_0} G^3_{\beta(0,r)}(\sigma,\eta_{\cdot-r})<\infty$. So last expression clearly goes to zero when $r$ tends to zero. 

\smallskip
Now we work on the interval $[r,T]$. Let $r\leq a<b\leq T$. Applying Proposition \ref{prop:normbetab}, Proposition \ref{prop:normbetaf}, Proposition \ref{prop:normbetarf} and Proposition \ref{prop:delay}, we obtain
	\begin{eqnarray*}
		\|x-x^r\|_{\beta'(a,b)} &\leq& 
 \big[ L_N (b-a)^{1-2\beta'} + G^1_{\beta'(a,b)}(\sigma,x,x^r,y) \big] \,(b-a)^{\beta'} \|x-x^r\|_{\infty(a,b)} \\
		&& \hspace{5mm} + \,G^2_{\beta'(a,b)}(\sigma,x,x^r,y) \,(b-a)^{\beta'} \|x-x^r\|_{\beta'(a,b)} \\
		&& \hspace{5mm} + \,G^3_{\beta'(a,b)}(\sigma,x^r) \,(b-a)^{\beta'} \|(x-x^r)\otimes y\|_{2\beta'(a,b)} \\
		&& \hspace{5mm} + K\rho\Lambda \,G^1_{\beta'(a,b)}(\sigma,x^r,\widehat{x}^r,y) (b-a)^{\beta'} r^{\beta'} \\
		&& \hspace{5mm} + \big[ K\rho\Lambda \,G^2_{\beta'(a,b)}(\sigma,x^r,\widehat{x}^r,y) + K\rho^3\Lambda^3 \,G^3_{\beta'(a,b)}(\sigma,\widehat{x}^r) \big] \,(b-a)^{\beta'} r^\varepsilon \\
		&& \hspace{5mm} + K\rho^3\Lambda^2\Lambda_r \,G^3_{\beta'(a,b)}(\sigma,\widehat{x}^r) \,(b-a)^{\beta'}.
	\end{eqnarray*}

Set
	\begin{eqnarray} 
		H_r &:=& K\rho\Lambda \big( G^1_{\beta'(0,T)}(\sigma,x^r,\widehat{x}^r,y) + 2G^3_{\beta'(0,T)}(\sigma,x^r) G^4_{\beta'(0,T)}(\sigma,x^r,\widehat{x}^r,y,y) \,T^{\beta'} \big) \,T^{\beta'} r^{\beta'} \nonumber \\
		&& + \Big[ K\rho\Lambda \big( G^2_{\beta'(0,T)}(\sigma,x^r,\widehat{x}^r,y) + 2G^3_{\beta'(0,T)}(\sigma,x^r) G^5_{\beta'(0,T)}(\sigma,x^r,\widehat{x}^r,y,y) \,T^{\beta'} \big) \nonumber \\
		&& \hspace{5mm} + K\rho^3\Lambda^3 \big( G^3_{\beta'(0,T)}(\sigma,\widehat{x}^r) + 2G^3_{\beta'(0,T)}(\sigma,x^r) G^6_{\beta'(0,T)}(\sigma,\widehat{x}^r,y) T^{\beta'} \big) \Big] \,T^{\beta'} r^\varepsilon \nonumber \\
		&& + K\rho^3\Lambda^2 \big( G^3_{\beta'(0,T)}(\sigma,\widehat{x}^r) + 2G^3_{\beta'(0,T)}(\sigma,x^r) G^6_{\beta'(0,T)}(\sigma,\widehat{x}^r,y) T^{\beta'} \big) \Big] \,T^{\beta'}\Lambda_r. \nonumber
	\end{eqnarray}
Observe that $H_r$ converges to zero when $r$ tends to zero.

Then we take $a$ and $b$ such that 
	\begin{equation} \label{condi1}
		(b-a)\leq\Delta_{\beta'}^1
	\end{equation}
and apply Proposition \ref{prop:norm2betaotimes} to get that
	\begin{eqnarray*}
		&& \|x-x^r\|_{\beta'(a,b)} \\
		&& \hspace{3.5mm} \leq \Big[ L_N \big( 2\|y\|_{\beta'} G^3_{\beta'(a,b)}(\sigma,x^r) (b-a)^{\beta'} +1 \big) (b-a)^{1-2\beta'} \\
		&& \hspace{7mm} + G^1_{\beta'(a,b)}(\sigma,x,x^r,y) + 2G^3_{\beta'(a,b)}(\sigma,x^r) G^4_{\beta'(a,b)}(\sigma,x,x^r,y,y) (b-a)^{\beta'} \Big] \\
		&& \hspace{20mm} \times \,(b-a)^{\beta'} \|x-x^r\|_{\infty(a,b)} \\
		&& \hspace{7mm} + \Big[ \,G^2_{\beta'(a,b)}(\sigma,x,x^r,y) + 2G^3_{\beta'(a,b)}(\sigma,x^r) G^5_{\beta'(a,b)}(\sigma,x,x^r,y,y) (b-a)^{\beta'} \Big] \\
		&& \hspace{20mm} \times \,(b-a)^{\beta'} \|x-x^r\|_{\beta'(a,b)} \\
		&& \hspace{7mm} + K\rho\Lambda \big( G^1_{\beta'(a,b)}(\sigma,x^r,\widehat{x}^r,y) + 2G^3_{\beta'(a,b)}(\sigma,x^r) G^4_{\beta'(a,b)}(\sigma,x^r,\widehat{x}^r,y,y) (b-a)^{\beta'} \big) \\
		&& \hspace{20mm} \times \,(b-a)^{\beta'} r^{\beta'} \\
		&& \hspace{7mm} + \Big[ K\rho\Lambda \big( G^2_{\beta'(a,b)}(\sigma,x^r,\widehat{x}^r,y) + 2G^3_{\beta'(a,b)}(\sigma,x^r) G^5_{\beta'(a,b)}(\sigma,x^r,\widehat{x}^r,y,y) (b-a)^{\beta'} \big) \\
		&& \hspace{15mm} + K\rho^3\Lambda^3 \big( G^3_{\beta'(a,b)}(\sigma,\widehat{x}^r) + 2G^3_{\beta'(a,b)}(\sigma,x^r) G^6_{\beta'(a,b)}(\sigma,\widehat{x}^r,y) (b-a)^{\beta'} \big) \Big] \\
		&& \hspace{20mm} \times \,(b-a)^{\beta'} r^\varepsilon \\
		&& \hspace{7mm} + K\rho^3\Lambda^2 \big( G^3_{\beta'(a,b)}(\sigma,\widehat{x}^r) + 2G^3_{\beta'(a,b)}(\sigma,x^r) G^6_{\beta'(a,b)}(\sigma,\widehat{x}^r,y) (b-a)^{\beta'} \big)(b-a)^{\beta'}\Lambda_r\\
		&& \hspace{7mm} \leq \Big[ L_N \big( 2\|y\|_{\beta'} G^3_{\beta'(a,b)}(\sigma,x^r) T^{\beta'} +1 \big) T^{1-2\beta'} 
		+ G^1_{\beta'(a,b)}(\sigma,x,x^r,y) \\ && \hspace{5mm} + 2G^3_{\beta'(a,b)}(\sigma,x^r) G^4_{\beta'(a,b)}(\sigma,x,x^r,y,y) T^{\beta'} \Big]
		 (b-a)^{\beta'} \|x-x^r\|_{\infty(a,b)} \\
		&& \hspace{7mm}+ \big[ \,G^2_{\beta'(a,b)}(\sigma,x,x^r,y) + 2G^3_{\beta'(a,b)}(\sigma,x^r) G^5_{\beta'(a,b)}(\sigma,x,x^r,y,y) T^{\beta'} \big] \\
		&& \hspace{20mm} \times \, (b-a)^{\beta'} \|x-x^r\|_{\beta'(a,b)}   
		 + H_r. \\
	\end{eqnarray*}
If we take now $a$ and $b$ such that
	\begin{eqnarray} \label{condi2}
		\big[ \,G^2_{\beta'(a,b)}(\sigma,x,x^r,y) + 2G^3_{\beta'(a,b)}(\sigma,x^r) G^5_{\beta'(a,b)}(\sigma,x,x^r,y,y) T^{\beta'} \big] \,(b-a)^{\beta'} \leq \frac12,
	\end{eqnarray}
we get
	\begin{eqnarray} \label{eq:bddbeta}
		& &\|x-x^r\|_{\beta'(a,b)} \leq 2 \Big[ L_N \big( 2\|y\|_{\beta'} G^3_{\beta'(a,b)}(\sigma,x^r) T^{\beta'} +1 \big) T^{1-2\beta'}+ G^1_{\beta'(a,b)}(\sigma,x,x^r,y)  \nonumber \\
		&& \hspace{2mm} + 2G^3_{\beta'(a,b)}(\sigma,x^r) G^4_{\beta'(a,b)}(\sigma,x,x^r,y,y) T^{\beta'} \Big] (b-a)^{\beta'} \|x-x^r\|_{\infty(a,b)} + 2H_r.
	\end{eqnarray}
On the other hand,
	\begin{eqnarray*}
		\|x-x^r\|_{\infty(a,b)} &\leq& |x_a-x^r_a| + (b-a)^{\beta'} \|x-x^r\|_{\beta'(a,b)},
	\end{eqnarray*}
and replacing in \eqref{eq:bddbeta} we obtain
	\begin{eqnarray}
		\|x-x^r\|_{\infty(a,b)} &\leq& |x_a-x^r_a| + 2 \Big[ L_N \big( 2\|y\|_{\beta'} G^3_{\beta'(a,b)}(\sigma,x^r) T^{\beta'} +1 \big) T^{1-2\beta'} \nonumber \\
		&& \hspace{5mm} + G^1_{\beta'(a,b)}(\sigma,x,x^r,y) + 2G^3_{\beta'(a,b)}(\sigma,x^r) G^4_{\beta'(a,b)}(\sigma,x,x^r,y,y) T^{\beta'} \Big] \nonumber \\
		&& \hspace{15mm} \times \,(b-a)^{2\beta'} \|x-x^r\|_{\infty(a,b)} + 2T^{\beta'} H_r.\nonumber
	\end{eqnarray}
If we take now  $a$ and $b$ such that	
	\begin{eqnarray} \label{condi3}
		&& 2 \Big[ L_N \big( 2\|y\|_{\beta'} G^3_{\beta'(a,b)}(\sigma,x^r) T^{\beta'} +1 \big) T^{1-2\beta'} + G^1_{\beta'(a,b)}(\sigma,x,x^r,y) \nonumber \\
		&& \hspace{5mm} + 2G^3_{\beta'(a,b)}(\sigma,x^r) G^4_{\beta'(a,b)}(\sigma,x,x^r,y,y) T^{\beta'} \Big] T^{\beta'} (b-a)^{\beta'} \leq \frac12 
	\end{eqnarray}	
we obtain
	\begin{eqnarray*} 
		\|x-x^r\|_{\infty(a,b)} &\leq& 2 |x_a-x^r_a| + 4T^{\beta'} H_r,
	\end{eqnarray*}
and hence
	\begin{eqnarray} \label{cota(0,b)}
		\sup_{0\leq t\leq b} |x_t-x^r_t| &\leq& 2 \sup_{0\leq t\leq a} |x_t-x^r_t| + 4T^{\beta'} H_r.
	\end{eqnarray}
We define $\Delta_{\beta'}$ such that all $a,b$ with $(b-a)\leq\Delta_{\beta'}$ fulfill the following conditions \eqref{condi1}, \eqref{condi2} and \eqref{condi3}, that is	
	\begin{eqnarray}
		\Delta_{\beta'} &:=& \bigg( 16 L_N T^{1-\beta'} + 16 \,\overline{G}^1_{\beta'} \,T^{\beta'} + 4 \,\overline{G}^2_{\beta'} \nonumber \\
		&& \hspace{5mm} + 8 \,\overline{G}^3_{\beta'} \big[ 4L_N \|y\|_{\beta'} T + 4 \,\overline{G}^4_{\beta'} \,T^{2\beta'} + \,\overline{G}^5_{\beta'} \,T^{\beta'} \big] + 2 \,\overline{G}^6_{\beta'} \bigg)^{-\frac{1}{\beta'}}. \nonumber 
	\end{eqnarray}
Then, it is clear that \eqref{cota(0,b)} holds for all $a$ and $b$ such that $b-a\leq \Delta_{\beta'}$.

\smallskip
Now, we take a partition $0=t_0<t_1<\cdots<t_M=T$ of the interval $[0,T]$ such that $(t_{i+1}-t_i)\leq\Delta_{\beta'}$. Then,
	\begin{eqnarray}
		\sup_{0\leq t\leq t_M=T} |x_t-x^r_t| &\leq& 2 \sup_{0\leq t\leq t_{M-1}} |x_t-x^r_t| + 4T^{\beta'} H_r.\nonumber
	\end{eqnarray}
Repeating the process $M$ times we obtain
	\begin{eqnarray*}
		\sup_{0\leq t\leq T} |x_t-x^r_t| &\leq& 2^M |x_0-x^r_0| + \bigg(\sum_{k=0}^{M-1}2^k\bigg) 4T^{\beta'} H_r = 4(2^M-1)T^{\beta'} H_r
	\end{eqnarray*}
that clearly converges to zero when $r$ tends to zero.

Following the same arguments it follows that	\[ \lim_{r\rightarrow0} \|(x\otimes y)-(x^r\otimes y)\|_{\infty} = \lim_{r\rightarrow0} \|(x-x^r)\otimes y\|_{\infty} \].
\qed

\section{Stochastic case}

In this section we apply the results obtained in the deterministic case to the case of the Brownian motion in order to get convergence of stochastic differential equations driven by Brownian motion. 

Suppose that $B=\{B_t=(B_t^1,B_t^2,\dots,B_t^m), \,t\geq0\}$ is a  $m$-dimensional Brownian motion. Fix a time interval $[0,T]$. Then, for $s,t\in[0,T]$ and $i,j\in\{1,\dots,m\}$, we consider the following tensor products:
	\begin{equation*}
		(B^i\otimes B^j)_{s,t} := \int_s^t (B^i_u-B^i_s) \,d^\circ B^j_u - \frac12 (t-s) \,\1_{\{i=j\}}
	\end{equation*}
and
	\begin{equation*}
		(B^i\otimes B^j_{\cdot-r})_{s,t} := \int_s^t (B^i_u-B^i_s) \,d^\circ B^j_{u-r},
	\end{equation*}
where the stochastic integral is a Stratonovich integral (see Russo and Vallois \cite{RV}). In \cite{NNT}  we see that we can choose a version $(B\otimes B_{\cdot-r})_{s,t}$ in such a way that  $(B_{\cdot-r},B,B\otimes B_{\cdot-r})$ constitutes a $\beta$-H\"older continuous multiplicative functional, for a fixed $\beta\in(\frac13,\frac12)$. On the other hand, from Hu and Nualart  \cite{HN}  it follows that  $(B\otimes B)_{s,t}$ is also a $\beta$-H\"older continuous multiplicative functional.

As an application of  Theorem \ref{thm_determ}  we deduce the convergence when the delay goes to zero  of the solutions for the stochastic differential delay equations
\begin{eqnarray}
X^r(t) &=& \eta(0) + \int_0^t b(u,X^r_u) \,du + \int_0^t \sigma(X^r_{u-r}) \,dB_u, \qquad t\in(0,T], \nonumber \\
X^r(t) &=& \eta(t), \qquad t\in[-r,0]. \nonumber
\end{eqnarray}
where the stochastic integral is a pathwise integral which depends on $B$ and $(B\otimes B)$. Set $X \equiv X^0$ the solution without delay and fix $\beta \in (\frac13,\frac12)$. Then the theorem states as follows:

\begin{thm} \label{thm_sto}
		Assume that $\sigma$ and $b$ satisfy {\upshape\bfseries (H1)} and {\upshape\bfseries (H2)} respectively, and both satisfy {\upshape\bfseries (H3)}. Assume also that $(\eta_{\cdot-r_0},B,\eta_{\cdot-r_0}\otimes B)\in M^\beta_{d,m}(0,r_0)$,  $\|\eta\|_{\beta(-r_0,0)}<\infty$ and $\sup_{r\leq r_0} \Phi_{\beta(0,r)}(\eta_{\cdot-r},B)<\infty$ a.s. Then,
			\[ \lim_{r\rightarrow0} \|X-X^r\|_{\infty} = 0 \quad a.s. \qquad {\rm and} \qquad 
			 \lim_{r\rightarrow0} \|(X\otimes B)-(X^r\otimes B)\|_{\infty} = 0 \quad a.s. \]
	\end{thm}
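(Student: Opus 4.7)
The plan is to apply Theorem \ref{thm_determ} pathwise to each Brownian trajectory. Since the deterministic theorem is purely analytical once its hypotheses are in place, the work is to check that, outside a single $\mathbb{P}$-null set, every assumption of Theorem \ref{thm_determ} holds for $y=B$. The assumptions on $\sigma$, $b$ and the initial segment $\eta$ carry over verbatim from the hypotheses of Theorem \ref{thm_sto}, so only the items involving $B$ and the shifted paths $B_{\cdot-r}$ require verification.

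First I would check the regularity items. Fix any $\beta\in(\tfrac13,\tfrac12)$. Standard Kolmogorov-Chentsov arguments give $\|B\|_{\beta(0,T)}<\infty$ a.s. From Hu-Nualart \cite{HN}, the Stratonovich iterated integral together with the Itô correction yields $(B,B,B\otimes B)\in M^\beta_{m,m}(0,T)$ a.s. From Neuenkirch-Nourdin-Tindel \cite{NNT}, the cross-object $(B_{\cdot-r},B,B\otimes B_{\cdot-r})$ belongs to $M^\beta_{m,m}(r,T)$ a.s.\ for each fixed $r\in(0,r_0]$; since a countable intersection of full-measure sets is full, this can be arranged to hold simultaneously on a dense set of $r$'s, and then for every $r$ by interpolation.

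The substantive step is the two limit conditions
\[ \|(B-B_{\cdot-r})\otimes B\|_{2\beta'(r,T)}\xrightarrow[r\downarrow 0]{}0,\qquad \|B_{\cdot-r}\otimes(B-B_{\cdot-r})\|_{2\beta'(r,T)}\xrightarrow[r\downarrow 0]{}0,\qquad\text{a.s.} \]
To prove these, I would first obtain moment estimates of the form
\[ \mathbb{E}\bigl|\bigl((B-B_{\cdot-r})\otimes B\bigr)_{s,t}\bigr|^{p}\;\le\; C_p\,(t-s)^{p}\,r^{\eta p},\qquad \mathbb{E}\bigl|\bigl(B_{\cdot-r}\otimes(B-B_{\cdot-r})\bigr)_{s,t}\bigr|^{p}\;\le\; C_p\,(t-s)^{p}\,r^{\eta p}, \]
for all $p\geq 2$ and some $\eta>0$. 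The plan is to decompose the Stratonovich integrals into an Itô (or anticipating) part plus the quadratic covariation correction, exploit independence of the Brownian increments on windows of length less than $r$, and then apply the Burkholder-Davis-Gundy inequality along with the fact that the integrand $(B-B_{\cdot-r})_u-(B-B_{\cdot-r})_s$ has variance $O(r)$ uniformly in $u,s$. Once the moment estimates are secured, a Garsia-Rodemich-Rumsey lemma (applied to the two-parameter process on $\Delta_T$) upgrades them to
\[ \mathbb{E}\bigl\|(B-B_{\cdot-r})\otimes B\bigr\|_{2\beta'(r,T)}^{p}\;\le\;C'_p\,r^{\eta' p},\qquad \mathbb{E}\bigl\|B_{\cdot-r}\otimes(B-B_{\cdot-r})\bigr\|_{2\beta'(r,T)}^{p}\;\le\;C'_p\,r^{\eta' p}, \]
for some $\eta'>0$ depending on the loss $\varepsilon=\beta-\beta'$ and on $p$. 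Choosing $p$ large enough that $\eta'p>1$, Markov's inequality together with Borel-Cantelli along the subsequence $r_n=2^{-n}$ yields a.s.\ convergence on that subsequence, and the uniform bound $\|B\|_{\beta}<\infty$ combined with Lemma \ref{lemma_yyr} lets one interpolate across the gaps to conclude a.s.\ convergence as $r\downarrow 0$.

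With all hypotheses of Theorem \ref{thm_determ} verified on a common full-measure event, a pathwise application gives both $\|X-X^r\|_\infty\to 0$ and $\|(X\otimes B)-(X^r\otimes B)\|_\infty\to 0$ almost surely, concluding the proof. The genuine obstacle is the moment bound on the shifted iterated integrals: the Stratonovich definition forces one to carefully handle the cross-bracket between $B$ and $B_{\cdot-r}$, and to extract the factor $r^{\eta p}$ one must exploit the short-time decorrelation $\mathrm{Cov}(B_u,B_{u-r})$ rather than a naive Hölder bound on the integrand, since the naive estimate would only yield $r^{2\beta'p}$ without the crucial extra smallness required to pass from expected Hölder norms to almost sure limits.
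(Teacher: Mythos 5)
Your top-level plan is exactly the paper's: apply Theorem \ref{thm_determ} pathwise, so that everything reduces to verifying, on a single full-measure event, the two limits $\|(B-B_{\cdot-r})\otimes B\|_{2\beta'(r,T)}\to 0$ and $\|B_{\cdot-r}\otimes(B-B_{\cdot-r})\|_{2\beta'(r,T)}\to 0$ --- which is precisely the lemma the paper proves in its last section. Where you genuinely diverge is in the proof of that lemma. The paper works pathwise throughout: it integrates by parts, represents $\int_s^t(B^j_u-B^j_{u-r})\,d^\circ B^i_u$ as a time-changed Brownian motion via Dambis--Dubins--Schwarz and uses the a.s.\ H\"older property of the time-changed motion with exponent $a\in\big(\frac{2\beta-2\varepsilon}{2\beta+1},2\beta-2\varepsilon\big)$ to get bounds like $C_{a,T}\|B\|_\beta^{2a}r^{2a\beta+a-2\beta'}$ for $t-s>r$, invokes Hu--Nualart's a.s.\ modulus estimate $\big|\int_s^t(B_u-B_s)\,d^\circ B_u\big|\leq Z|t-s|\log\frac{1}{|t-s|}$ for $t-s\leq r$, and handles the diagonal $i=j$ by explicit It\^o--Stratonovich corrections (checking the Malliavin-derivative term vanishes). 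Crucially, all random constants ($\|B\|_\beta$, $Z$, $Z'$, $C_{a,T}$) are independent of $r$, so convergence along the continuum $r\downarrow 0$ holds on one null set with no Borel--Cantelli and no interpolation. Your moment/Garsia--Rodemich--Rumsey/Borel--Cantelli route is a legitimately different alternative, but as written it has two problems.

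First, the announced moment bound $\mathbb{E}\big|\big((B-B_{\cdot-r})\otimes B\big)_{s,t}\big|^p\leq C_p(t-s)^p r^{\eta p}$ is false. When $t-s\leq r$ the increments $B_u-B_s$ and $B_{u-r}-B_{s-r}$ are independent, the integrand has variance of order $u-s$ rather than $r$, and testing $t-s=r/2$ shows the true order is $(t-s)^p$ with no smallness in $r$ at all. The correct estimate is $\mathbb{E}|\cdot|^p\leq C_p\big((t-s)\min(r,t-s)\big)^{p/2}\leq C_p(t-s)^{(1-\delta)p}r^{\delta p}$ for any $\delta\in(0,\tfrac12)$; since $2\beta'<1$ you may choose $\delta<1-2\beta'$ and the GRR step survives, so this is a repairable bookkeeping slip. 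Second, and more seriously, the interpolation across the gaps $r\in(r_{n+1},r_n)$ ``using $\|B\|_\beta$ and Lemma \ref{lemma_yyr}'' does not work: those control only first-level H\"older norms, and at regularity $\beta<\tfrac12$ the second-level quantity $\|(B_{\cdot-r_n}-B_{\cdot-r})\otimes B\|_{2\beta'}$ cannot be bounded by products of first-level norms (Young integration fails since $2\beta'<1$ --- this is the entire reason the iterated integrals require a separate argument). To close this you would need the same moment machinery applied to shift-differences, $\mathbb{E}\big|\big((B_{\cdot-r}-B_{\cdot-r'})\otimes B\big)_{s,t}\big|^p\leq C_p(t-s)^{(1-\delta)p}|r-r'|^{\delta p}$, followed by a Kolmogorov/GRR continuity argument in the parameter $r$ (or Borel--Cantelli over a grid fine enough that the difference terms are summable). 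The same objection applies to upgrading $(B_{\cdot-r},B,B\otimes B_{\cdot-r})\in M^\beta_{m,m}(r,T)$ from a countable dense set of $r$'s to all $r\in(0,r_0]$ ``by interpolation''; the paper sidesteps all of this because its pathwise bounds hold simultaneously for every $r$.
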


Applying   Theorem \ref{thm_determ} pathwise, the proof of Theorem  \ref{thm_sto} is an obvious consequence of (\ref{limBrBBr}) and (\ref{limBBrB}) of the following lemma.

\begin{lemma}
We have that
\begin{eqnarray}
\|B\otimes(B-B_{\cdot-r})\|_{2\beta'(r,T)} \longrightarrow 0 \qquad \text{a.s. when r tends to 0,}  \label{limBBBr} \\
\|B_{\cdot-r}\otimes(B-B_{\cdot-r})\|_{2\beta'(r,T)} \longrightarrow 0 \qquad \text{a.s. when r tends to 0,}  \label{limBrBBr} \\
\|(B-B_{\cdot-r})\otimes B\|_{2\beta'(r,T)} \longrightarrow 0 \qquad \text{a.s when r tends to 0.}  \label{limBBrB}
\end{eqnarray}
\end{lemma}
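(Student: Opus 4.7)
The strategy is to reduce each of the three tensor products to an ordinary It\^o integral, establish sup-norm convergence to zero via standard martingale moment estimates, and close with an interpolation argument against a uniform-in-$r$ $2\beta$-H\"older bound. By bilinearity of the tensor-product construction,
\[ (B-B_{\cdot-r})\otimes B = (B\otimes B) - (B_{\cdot-r}\otimes B), \qquad B\otimes(B-B_{\cdot-r}) = (B\otimes B) - (B\otimes B_{\cdot-r}), \]
and analogously for $B_{\cdot-r}\otimes(B-B_{\cdot-r})$. On $[r,T]$ the covariation $[B^i,B^i_{\cdot-r}]$ vanishes identically (shifted and present Brownian increments live on disjoint subintervals once the mesh is below $r$), so the two Stratonovich corrections cancel and one is left, for instance in the case of (\ref{limBBrB}), with
\[ \bigl((B^i-B^i_{\cdot-r})\otimes B^j\bigr)_{s,t} = \int_s^t\bigl[(B^i_u-B^i_{u-r})-(B^i_s-B^i_{s-r})\bigr]\,dB^j_u \]
as a genuine It\^o integral, valid for both $i=j$ and $i\neq j$; analogous representations hold for the other two quantities.

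Next I derive the moment bounds. Writing $Y^r_v:=B^i_v-B^i_{v-r}$, a direct covariance computation gives the sharp estimate $\mathrm{Var}(Y^r_u-Y^r_s) = 2\min(u-s,r)$ for $r\le s\le u$, so BDG combined with Gaussian hypercontractivity yields, for every $p\ge 2$ and $r\le s<t\le T$,
\[ \E\bigl[|f_r(s,t)|^p\bigr]^{1/p} \le C_p\sqrt{r(t-s)}, \]
where $f_r$ denotes any of the three tensor products. To upgrade this to sup-norm control I would use the decomposition $f_r(s,t)=(Z^r_t-Z^r_s)-Y^r_s(B^j_t-B^j_s)$ with $Z^r_t:=\int_0^t Y^r_u\,dB^j_u$. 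Doob's maximal inequality for the martingale $Z^r$ produces $\E[\sup_t|Z^r_t|^p]\le C_p r^{p/2}$, and the deterministic estimate $\sup_s|Y^r_s|\le\|B\|_\beta r^\beta$ together with Cauchy--Schwarz against $\sup_t|B^j_t|$ controls the second piece, yielding
\[ \E\bigl[\|f_r\|_{\infty(r,T)}^p\bigr] \le C_p\,r^{p\beta}. \]
A Borel-Cantelli argument along $r_n=2^{-n}$, combined with a short continuity-in-$r$ argument for $Y^r$ and $Z^r$ in sup norm, then delivers $\|f_r\|_{\infty(r,T)}\to 0$ almost surely along the continuous parameter.

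Finally, for any $g:\Delta_T\to\R^m$ and $\theta=\beta'/\beta\in(0,1)$, the elementary interpolation
\[ \|g\|_{2\beta'(a,b)}\le \|g\|_{\infty(a,b)}^{1-\theta}\,\|g\|_{2\beta(a,b)}^{\theta} \]
reduces the theorem to an a.s.\ uniform-in-$r$ bound $\sup_{r\le r_0}\|f_r\|_{2\beta(r,T)}<\infty$. \emph{This uniform $2\beta$-H\"older control is the main obstacle.} While $\|B\otimes B\|_{2\beta(0,T)}$ is a.s.\ finite by Hu--Nualart and, for each individual $r$, $\|B\otimes B_{\cdot-r}\|_{2\beta(r,T)}$ is a.s.\ finite by the NNT construction, the latter's dependence on $r$ needs to be tamed uniformly as $r\downarrow 0$. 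I would resolve this by applying a multi-parameter Garsia--Rodemich--Rumsey estimate to the three-variable Gaussian-chaos process $(r,s,t)\mapsto(B\otimes B_{\cdot-r})_{s,t}$, exploiting moment bounds on its increments in $r$ that are themselves driven by the variance identity above; an identical argument for $B_{\cdot-r}\otimes B$, together with the trivial bound for $B\otimes B$, closes the proof of all three statements (\ref{limBBBr})--(\ref{limBBrB}).
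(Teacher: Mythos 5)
Your overall architecture (It\^o reduction, moment bounds, Borel--Cantelli, interpolation against a uniform H\"older bound) is a genuinely different route from the paper, and its first stages are essentially sound: the variance identity $\mathrm{Var}(Y^r_u-Y^r_s)=2\min(u-s,r)$ is correct, the BDG/hypercontractivity bound $\E[|f_r(s,t)|^p]^{1/p}\le C_p\sqrt{r(t-s)}$ follows, and the interpolation $\|g\|_{2\beta'(a,b)}\le\|g\|_{\infty(a,b)}^{1-\theta}\|g\|_{2\beta(a,b)}^{\theta}$ with $\theta=\beta'/\beta$ is valid. One slip to flag first: the claimed single-It\^o-integral representation does \emph{not} hold for $B\otimes(B-B_{\cdot-r})$, since there the shifted process is the \emph{integrator} and the integrand $B_u$ is anticipating with respect to the natural filtration of $B_{\cdot-r}$; one must integrate by parts, which produces boundary cross terms such as $(B^i_t-B^i_s)(B^j_{t-r}-B^j_t)$ (the paper's terms $A_1$ and $B_1$). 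These terms still satisfy your moment bound $\sqrt{r(t-s)}$, so this is fixable bookkeeping, but as stated the representation is wrong for one of the three objects.

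The genuine gap is the step you yourself label the main obstacle: the uniform-in-$r$ bound $\sup_{r\le r_0}\|f_r\|_{2\beta(r,T)}<\infty$ a.s., without which the interpolation delivers nothing. You defer this to a multi-parameter Garsia--Rodemich--Rumsey estimate for the three-variable chaos process $(r,s,t)\mapsto(B\otimes B_{\cdot-r})_{s,t}$, but you never produce the increment-in-$r$ moment bounds, nor address the $r$-dependent domain $[r,T]$ or the near-diagonal regime where $t-s$ and $r$ are comparable; the same unexecuted machinery is also what your ``short continuity-in-$r$ argument'' needs to upgrade convergence along $r_n=2^{-n}$ to the continuous parameter. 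So the proof is incomplete precisely at its decisive point. It is worth noting that the paper avoids this obstacle entirely by estimating the $2\beta'$-norm pathwise with explicit positive powers of $r$: after integration by parts it splits into the regimes $t-s>r$ and $t-s\le r$; in the first it represents the adapted integral as a time-changed Brownian motion $W_{\int_s^t(B^j_u-B^j_{u-r})^2du}$ and uses the H\"older modulus of $W$ with exponent $a$ chosen so that $\frac{2\beta-2\varepsilon}{2\beta+1}<a<2\beta-2\varepsilon$, giving the bound $C_{a,T}\|B\|_\beta^{2a}r^{2a\beta+a-2\beta'}$; in the second it invokes the Hu--Nualart estimate $\big|\int_s^t(B^i_u-B^i_s)\,d^\circ B^j_u\big|\le Z|t-s|\log\frac{1}{|t-s|}$, yielding $r^{1-2\beta'}\log\frac1r$; and for $i=j$ it checks via the vanishing Malliavin derivative that the Stratonovich correction of the shifted integral is zero. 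Since every term is an a.s.\ finite random constant times a positive power of $r$ (up to a logarithm), a.s.\ convergence as $r\downarrow0$ follows directly, with no Borel--Cantelli, no interpolation, and no uniform-in-$r$ GRR bound. If you want to salvage your route, the multi-parameter GRR step must actually be carried out; alternatively, adopting the paper's time-change and log-modulus estimates would close the argument more economically.
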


\begin{proof}	
Let us recall first that $ \|B\|_{\beta}<\infty$  a.s.
			
Then we begin estimating \eqref{limBBBr} when $i\neq j$ (we will consider the case $i=j$ at the end). By definition
\begin{eqnarray*}
&& \|B\otimes(B-B_{\cdot-r})\|_{2\beta'(r,T)} \\
&& \hspace{5mm} = \sup_{{s,t\in[r,T]}} \frac{1}{(t-s)^{2\beta'}} \bigg| \int_s^t (B^i_u-B^i_s) \,d^\circ B^j_{u-r} - \int_s^t (B^i_u-B^i_s) \,d^\circ B^j_u \bigg| 
\end{eqnarray*}
Assume first that $t-s>r$. Applying integration by parts, we have
	\begin{eqnarray} \label{A12}
		&& \|B\otimes(B-B_{\cdot-r})\|_{2\beta'(r,T)} \nonumber \\
		&& \hspace{5mm} = \sup_{\substack{s,t\in[r,T]\\ t-s>r}} \frac{1}{(t-s)^{2\beta'}} \bigg| (B^i_t-B^i_s)(B^j_{t-r}-B^j_{s-r}) - \int_s^t (B^j_{u-r}-B^j_{s-r}) \,d^\circ B^i_u \nonumber \\
		&& \hspace{45mm} + \int_s^t (B^j_u-B^j_s) \,d^\circ B^i_u - (B^i_t-B^i_s)(B^j_t-B^j_s) \bigg| \nonumber \\
		&& \hspace{5mm} \leq \sup_{\substack{s,t\in[r,T]\\ t-s>r}} \frac{1}{(t-s)^{2\beta'}} \bigg| (B^i_t-B^i_s)(B^j_{t-r}-B^j_t) \bigg| \nonumber \\
		&& \hspace{25mm} + \sup_{\substack{s,t\in[r,T]\\ t-s>r}} \frac{1}{(t-s)^{2\beta'}} \bigg| \int_s^t (B^j_u-B^j_{u-r}) \,d^\circ B^i_u \bigg| = A_1 + A_2.
	\end{eqnarray}
On one hand, by \eqref{yyr_inf}
\begin{eqnarray*}
A_1 &\leq& \sup_{\substack{s,t\in[r,T]\\ t-s>r}} \|B\|_{\beta'} \frac{|B^j_t-B^j_{t-r}|}{(t-s)^{\beta'}} \leq \sup_{\substack{s,t\in[r,T]\\ t-s>r}} \frac{1}{(t-s)^{\beta'}} \|B\|_{\beta'} \|B-B_{\cdot-r}\|_{\infty} \\
&\leq& \sup_{\substack{s,t\in[r,T]\\ t-s>r}} \frac{r^\beta}{(t-s)^{\beta'}} \|B\|_{\beta}^2 \,T^\varepsilon 
\leq \|B\|_{\beta}^2 \,T^\varepsilon \,r^\varepsilon
\end{eqnarray*}
that goes to zero when $r$ tends to zero.
On the other hand, we have that
$\int_s^t (B^j_u-B^j_{u-r}) \,d^\circ B^i_u $
is a continuous martingale, so it can be represented as a time-changed Brownian motion:
$ W_{\int_s^t (B^j_u-B^j_{u-r})^2 \,du}, $
where $W$ is a Brownian motion.
Now we choose $a\in(0,\frac12)$ such that $\frac{2\beta-2\varepsilon}{2\beta+1}<a<2\beta-2\varepsilon$. Applying H\"older property of the Brownian motion, we have
\begin{eqnarray*}
A_2 &=& \sup_{\substack{s,t\in[r,T]\\ t-s>r}} \frac{1}{(t-s)^{2\beta'}} \Big| W_{\int_s^t (B^j_u-B^j_{u-r})^2 \,du} \Big|\leq \sup_{\substack{s,t\in[r,T]\\ t-s>r}} \frac{C_{a,T}}{(t-s)^{2\beta'}} \Big| \int_s^t (B^j_u-B^j_{u-r})^2 \,du \Big|^a \\
&\leq& \sup_{\substack{s,t\in[r,T]\\ t-s>r}} C_{a,T} \,\|B\|_{\beta}^{2a} \,r^{2a\beta} \,(t-s)^{a-2\beta'}
\leq  C_{a,T} \,\|B\|_{\beta}^{2a}\,r^{2a\beta+a-2\beta'}
\end{eqnarray*}
that clearly goes to zero when $r$ tends to zero thanks to the conditions on $a$.
		 
Now assume that $t-s\leq r$. By integration by part formula, we have
	\begin{eqnarray} 
		&& \|B\otimes(B-B_{\cdot-r})\|_{2\beta'(r,T)} \nonumber \\
		&& \hspace{5mm} \leq \sup_{\substack{s,t\in[r,T]\\ t-s\leq r}} \frac{1}{(t-s)^{2\beta'}} \bigg| (B^i_t-B^i_s)(B^j_{t-r}-B^j_{s-r}) \bigg| \nonumber \\
		&& \hspace{15mm} + \sup_{\substack{s,t\in[r,T]\\ t-s\leq r}} \frac{1}{(t-s)^{2\beta'}} \bigg| \int_s^t (B^j_{u-r}-B^j_{s-r}) \,d^\circ B^i_u \bigg| \nonumber \\
		&& \hspace{15mm} + \sup_{\substack{s,t\in[r,T]\\ t-s\leq r}} \frac{1}{(t-s)^{2\beta'}} \bigg| \int_s^t (B^i_u-B^i_s) \,d^\circ B^j_u \bigg| = B_1 + B_2 + B_3. \label{B123}
	\end{eqnarray}
The first term is easy to bound, indeed,
\begin{eqnarray*}
B_1 = \sup_{\substack{s,t\in[r,T]\\ t-s\leq r}} \frac{|B^i_t-B^i_s|}{(t-s)^\beta} \cdot \frac{|B^j_{t-r}-B^j_{s-r}|}{(t-s)^\beta} \cdot \frac{(t-s)^{2\beta}}{(t-s)^{2\beta'}} \leq \|B\|_{\beta}^2 \,r^{2\varepsilon}.
\end{eqnarray*}
For the other two terms we use inequality (5.8) of Hu and Nualart \cite{HN}. It states that there exists a random variable $Z$ such that, almost surely, for all $s,t\in[0,T]$ we have
\[ \bigg| \int_s^t (B^i_u-B^i_s) \,d^\circ B^j_u \bigg| \leq Z |t-s| \log{\frac{1}{|t-s|}}. \]
Since the process $\{M'_t,t\in[s,T]\}$, defined as
$ M'_t = \int_s^t (B^j_{u-r}-B^j_{s-r}) \,d^\circ B^i_u, $
is  a continuous martingale, we can  follow the ideas in \cite{HN} to get the previous inequality in order to obtain that there exists a random variable $Z'$ such that, almost surely, for all $s,t\in[0,T]$ we have
\[ \bigg| \int_s^t (B^j_{u-r}-B^j_{s-r}) \,d^\circ B^i_u \bigg| \leq Z' |t-s| \log{\frac{1}{|t-s|}}. \]
Hence
\begin{eqnarray*}
B_2 \leq Z' (t-s)^{1-2\beta'} log{\frac{1}{(t-s)}} \leq Z' r^{1-2\beta'} log{\frac{1}{r}}
\end{eqnarray*}
and $B_2$ goes to zero when $r$ tends to zero. $B_3$ can be studied using the same arguments.
\vskip 3pt
\noindent
It only remains to prove the case where $i=j$. To simplify the notation we will not write the superindex $i$.
\vskip 3pt
\noindent
For $t-s\leq r$, we apply again the integration by parts formula and we obtain that
	\begin{equation*}
		\|B\otimes(B-B_{\cdot-r})\|_{2\beta'(r,T)} \leq B'_1 + B'_2 + B'_3 + B'_4,
	\end{equation*}
where $B'_1$, $B'_2$ and $B'_3$ are the terms defined in \eqref{B123} with $i=j$ and
	\begin{equation*}
		B'_4 := \sup_{\substack{s,t\in[r,T]\\ t-s\leq r}} \frac12 |t-s|^{1-\beta'} \leq \frac12 r^{1-\beta'}.
	\end{equation*}
So it only remains to study the terms $B'_1$, $B'_2$ and $B'_3$. 
Easily, for $B'_1$ we can repeat the same arguments used for $B_1$ and we also obtain that 
$B'_1\leq \|B\|_\beta^2r^{2\varepsilon}.$
If we focus in the second term, it can be written as
\[B'_2=\sup_{\substack{s,t\in[r,T]\\ t-s\leq r}}\frac{1}{(t-s)^{2\beta'}}\left|\int_s^t (B_{u-r}-B_{s-r})dB_u+\frac12\int_s^t D_u(B_{u-r}-B_{s-r})du\right|,\]
where $D_u$ denotes de Malliavin derivative. It is easy to check that this Malliavin derivative is zero. So, $B'_2$ it is now a martingale and we can proceed as in the case $i\neq j$, and we obtain 
\[B'_2\leq C r^{1-2\beta'}\log\frac{1}{r}.\]
Finally, for the last term we have
\[B'_3\leq \sup_{\substack{s,t\in[r,T]\\ t-s\leq r}}\frac{1}{2(t-s)^{2\beta'}}(B_t-B_s)^2\leq \frac12 \|B\|^2_\beta r^{2\varepsilon}.\]
Therefore, in that case also the three terms tend to zero when $r$ goes to zero.

\noindent
For the case $t-s>r$, by integration by parts formula we have
	\begin{eqnarray*}
		&& \|B\otimes(B-B_{\cdot-r})\|_{2\beta'(r,T)}  \leq \sup_{\substack{s,t\in[r,T]\\ t-s>r}} \frac{1}{(t-s)^{2\beta'}} \bigg| (B_t-B_s)(B_{t-r}-B_t) \bigg| \\
		&& \hspace{25mm} + \sup_{\substack{s,t\in[r,T]\\ t-s>r}} \frac{1}{(t-s)^{2\beta'}} \bigg| \int_s^t (B_u-B_{u-r}) \,d^\circ B_u - \frac12 (t-s) \bigg|.
	\end{eqnarray*}
The first term is analogous to the term $A_1$ defined in \eqref{A12}, so it is bounded by $\|B\|_{\beta}^2 \,T^\varepsilon \,r^\varepsilon$. \\
For the second term, we change from Stratonovich to It\^o integral. Since
	\[ \int_s^t (B_u-B_{u-r}) \,d^\circ B_u = \int_s^t (B_u-B_{u-r}) \,dB_u + \frac12 (t-s) .\]
For a fixed $s$, the process $\{M''_t,t\in[s,T]\}$, defined as
	$ M''_t = \int_s^t (B_u-B_{u-r}) \,dB_u$
is a continuous martingale. So following the ideas used for $A_2$, we obtain that
	\begin{eqnarray*}
		&& \sup_{\substack{s,t\in[r,T]\\ t-s>r}} \frac{1}{(t-s)^{2\beta'}} \bigg| \int_s^t (B_u-B_{u-r}) \,d^\circ B_u - \frac12 (t-s) \bigg| \\
		&& \hspace{25mm} \leq \sup_{\substack{s,t\in[r,T]\\ t-s>r}} \frac{1}{(t-s)^{2\beta'}} \bigg| \int_s^t (B_u-B_{u-r}) \,dB_u \bigg| \leq C_{a,T} \,\|B\|_{\beta}^{2a}\,r^{2a\beta+a-2\beta'},
	\end{eqnarray*}
where $a\in(0,\frac12)$ such that $\frac{2\beta-2\varepsilon}{2\beta+1}<a<2\beta-2\varepsilon$.

So finally, we obtain that $\|B\otimes (B-B_{\cdot-r})\|_{2\beta'(r,T)}\rightarrow 0$ as we wish.
\vskip 5pt

The inequality \eqref{limBrBBr} can be proved with similar computations and the proof of \eqref{limBBrB} follows immediately from the fact that
\[\|(B-B_{\cdot-r})\otimes B\|_{2\beta'(r,T)}\leq B_2+B_3.\]
\end{proof}

\end{document}